\algrenewcommand\algorithmicindent{1em}
\DeclareTextCommand{\textprime}{\encodingdefault}{%
  \mbox{$\m@th'\kern-\scriptspace$}% 
}
\renewcommand{\texttt}[1]{%
  \begingroup
  \ttfamily
  \begingroup\lccode`~=`/\lowercase{\endgroup\def~}{/\discretionary{}{}{}}%
  \begingroup\lccode`~=`[\lowercase{\endgroup\def~}{[\discretionary{}{}{}}%
  \begingroup\lccode`~=`.\lowercase{\endgroup\def~}{.\discretionary{}{}{}}%
  \begingroup\lccode`~=`-\lowercase{\endgroup\def~}{-\discretionary{}{}{}}%
  \catcode`/=\active\catcode`[=\active\catcode`.=\active\catcode`-=\active
  \scantokens{#1\noexpand}%
  \endgroup
}
\newtheorem*{rmk}{Remark}
  \def\Cc{\mathcal{C}}  \def\Ec{\mathcal{E}}  \def\Gc{\mathcal{G}}    \def\Kc{\mathcal{K}}   \def\Nc{\mathcal{N}}  \def\Pc{\mathcal{P}}  \def\Rc{\mathcal{R}} \def\Sc{\mathcal{S}}
\def\I{\mathbb{I}}
\def\R{\mathbb{R}}
\def\al{\alpha}
\def\be{\beta}
\def\Ga{\Gamma}
\def\ga{\gamma}
\def\Dl{\Delta}
\def\dl{\delta}
\def\ep{\epsilon}
\def\lm{\lambda}
\def\sg{\sigma}
\def\Om{\Omega}
\def\zt{\zeta}
\def\gd{\nabla}
\def\pa{\partial}
\def\es{\enspace}
\def\ra{\rightarrow}
\def\la{\leftarrow}
\def\Ra{\Rightarrow}
\def\1{\mathbf{1}}
\def\0{\mathbf{0}}
\def\limti{\lim_{t\rightarrow\infty}}
\def\XB{\mathbf{X}}
\def\xB{\mathbf{x}}
\def\xb{\bar{x}}
\def\xt{\tilde{x}}
\def\yB{\mathbf{y}}
\def\yb{\bar{y}}
\def\zB{\mathbf{z}}
\def\taut{\tilde{\tau}}
\def\ft{\tilde{f}}
\def\xBt{\tilde{\mathbf{x}}}
\def\pit{\tilde{\pi}}
\def\pt{\tilde{p}}
\def\pb{\bar{p}}
\def\pB{\mathbf{p}}
\def\pBb{\bar{\mathbf{p}}}
\def\xBb{\bar{\mathbf{x}}}
\def\xBh{\hat{\mathbf{x}}}
\def\PB{\mathbf{P}}
\def\WB{\mathbf{W}}
\def\piB{\mathbf{\pi}}
\def\piBt{\tilde{\mathbf{\pi}}}
\def\rB{\mathbf{r}}
\def\rBb{\bar{\mathbf{r}}}
\def\yBb{\bar{\mathbf{y}}}
\def\PBh{\hat{\mathbf{P}}}
\def\WBh{\hat{\mathbf{W}}}
\def\Gab{\bar{\Ga}}
\def\uB{\mathbf{u}}
\def\IB{\mathbf{I}}
\def\eB{\mathbf{e}}
\def\nb{\bar{n}}
\def\Si{{\mathcal{S}_i}}
\def\Nm{{\mathcal{N}_m}}
\def\taumin{\tau^{\min}}
\def\Lmax{L^{\max}}
\def\etamax{\eta^{\max}}
\def\RB{\mathbf{R}}
\def\wB{\mathbf{w}}
\def\rU{\subset}
\def\uU{\cup}
\def\dU{\cap}
\def\rUe{\subseteq}
\def\sm{\setminus}
\def\buU{\bigcup}
\def\S{\mathcal{S}}
\def\bms{\begin{bmatrix}}
\def\bme{\end{bmatrix}}
\def\beq{\begin{equation}}
\def\eeq{\end{equation}}
\def\bal{\begin{equation}\begin{aligned}}
\def\eal{\end{aligned}\end{equation}}
\def\bals{\begin{equation*}\begin{aligned}}
\def\eals{\end{aligned}\end{equation*}}
\numberwithin{theorem}{section}
\newcommand{\TheTitle}{Localization and Approximations for Distributed Non-convex Optimization} 
\newcommand{\TheAuthors}{H. Kao and V. Subramanian}
\headers{\TheTitle}{\TheAuthors}
\title{{\TheTitle}
%\thanks{Submitted to the editors DATE.
\thanks{Submitted to the editors on June 14, 2021.
%\funding{This work was funded by the Fog Research Institute under contract no.~FRI-454.}
}
}
\author{
  Hsu Kao\thanks{Electrical and Computer Engineering, University of Michigan, Ann Arbor, MI (\email{hsukao@umich.edu}).}
  \and
  Vijay Subramanian\thanks{Electrical and Computer Engineering, University of Michigan, Ann Arbor, MI (\email{vgsubram@umich.edu}).}
}
\begin{document}

\maketitle

\sloppy

% REQUIRED
\begin{abstract}
Distributed optimization has many applications, in communication networks, sensor networks, signal processing, machine learning, and artificial intelligence. Methods for distributed convex optimization are widely investigated, while those for non-convex objectives are not well understood. One of the first non-convex distributed optimization frameworks over an arbitrary interaction graph was proposed by Di Lorenzo and Scutari [\textit{IEEE Trans. on Signal and Information Processing over Network}, 2 (2016), pp. 120-136], which iteratively applies a combination of local optimization with convex approximations and local averaging. We generalize the existing results in two ways. In the case when the decision variables are separable such that there is partial dependency in the objectives, we reduce the communication complexity of the algorithm so that nodes only keep and communicate local variables instead of the whole vector of variables. In addition, we relax the assumption that the objectives' gradients are bounded and Lipschitz by means of successive proximal approximations. Having developed the methodology, we then discuss many ways to apply our algorithmic framework to resource allocation problems in multi-cellular networks, where the two generalizations are found useful and practical. Simulation results show the superiority of our resource allocation algorithms over naive single cell methods, and furthermore, our approximation framework lead to algorithms that are numerically more stable.
\end{abstract}

% REQUIRED
\begin{keywords}
Distributed optimization, non-convex optimization, localization, proximal approximation, resource allocation.
\end{keywords}

% REQUIRED
\begin{AMS}
90C26, 90C90
\end{AMS}

%%%%%%%%%%%%%%%%%%%%%%%%%%%%%%%%%%%%%%%%%%%%%%%%%%
%%%%%              Introduction              %%%%%
%%%%%%%%%%%%%%%%%%%%%%%%%%%%%%%%%%%%%%%%%%%%%%%%%%

\section{Introduction}\label{sec:intro}
Distributed computation has received great attention in response to the overwhelming need in applications \cite{Rabbat_Sensor_2004,Duchi_DistDual_2012}. In all of these applications there are multiple agents or devices with their own local data that want to perform a joint computational task that is either impractical or infeasible to be centralized. Reasons for this choice include high communication costs, availability of large amount of information, unavailability of centralized processing, or simply harnessing the efficiency of parallelization \cite{Rabbat_Sensor_2004,Duchi_DistDual_2012}. Some specific examples are as follows: communication networks, e.g. scheduling and allocation in a multi-cell setting; sensor networks, e.g., remote parameter estimation with sensor data \cite{Rabbat_Sensor_2004}; and statistical machine learning, with large datasets. In all of these cases the underlying computational task can be formulated as an optimization problem, with objective being utilities, loss functions, etc. \cite{Duchi_DistDual_2012}. Such motivations have lead to a burgeoning of the research in distributed optimization.

Distributed optimization methods with convex objective functions have been well investigated in the literature. Many proposed methods fall into the category of gradient or subgradient methods \cite{Nedic_DistSubgrad_2009}, which take gradient descent steps at each node and then average the results. Another class of methods utilize a dual-decomposition idea, like the Alternating Direction Method of Multipliers (ADMM) method \cite{Boyd_ADMM_2011}. In contrast, distributed non-convex optimization has received much less attention. One of the first provably convergent algorithms for \emph{non-convex} objectives using a fully distributed scheme, called \emph{NEXT}, over a network with arbitrary graphical structure was introduced in \cite{Scutari_NEXT_2016}. The main idea in \cite{Scutari_NEXT_2016} is to perform local optimization by finding surrogate convex functions based on the current iterate and utilizing successive convex approximations of the non-convex objective, and then enforcing consensus among the network so that a global objective can be solved in a distributed manner. Other papers on this topic assume much stronger conditions, such as existence of a central controller to align the outputs in each step \cite{Hajinezhad_NESTT_2016}, or even a complete graph network (interaction) structure \cite{Scaglione_Gossip_2013}. Given this we will adopt the framework of \emph{NEXT} \cite{Scutari_NEXT_2016} in our work. Here we consider the scenario where the network that specifies the communication structure is given; the reader is referred to \cite{Mingyan_NetCompute_2017} for how to decide the network structure actively, with minimizing the energy consumption for delay-constrained singular value decomposition computation as a motivating example.

There are some fundamental issues with \emph{NEXT}~\cite{Scutari_NEXT_2016} though, which we will address in our work. First, the algorithm requires each node to store and update the entire vector of decision variables, irrespective of the underlying dimension or structure; this is also an issue more broadly for most distributed optimization algorithms. In certain applications, the decision variables might be the ensemble of sets of control parameters at each node, which could be of a significant dimension themselves: e.g., multiple platoons of automated cars with a local controller for each team, or cellular base-stations each with many connected devices. Directly using \emph{NEXT} would necessitate greatly increased storage at each node and also high-rate and low-latency communication between all nodes, which is impractical for a large network. In the illustrative examples above, the decision variables typically can be decomposed into blocks with a sparse interconnection between different blocks. Such a block structure could be used in reducing the storage and communication requirements. This, however, has not received much attention in the distributed optimization literature. The block coordinate descent method for centralized optimization is studied in \cite{Beck_BlockDescent_2013}, wherein gradient descent is effectively carried out one block at a time. When the objective is the sum of separable functions, convergence is shown for extensions of ADMM when the number of blocks is two \cite{Beck_BlockDescent_2013}, but this no longer holds when there are three blocks \cite{Yuan_BlockADMM_2016}. A distributed optimization scheme for variables with block structure is proposed in \cite{Scutari_Parallel_2015}, but only the convex part of the objective is decomposable. An optimizatiton problem where the separable variables of agents are coupled through a convex social cost function is studied in \cite{Mengdi_Sep_2017}, where the author uses a dual method to decouple the variables and leverage the separability; the goal is to show that the duality gap vanishes when the number of agents grows. In our work we will address this lacuna and present an algorithm that \emph{exploits the underlying block structure of the decision variables} through a process which we call \emph{localization} when the objective is the sum of separable non-convex functions. Our idea of localization is similar to \cite{Hu_LocalDist_2017}, which exploit the sparsity of the constraints in convex feasibility problems (CFPs) to reduce the memory and communication needed. As we will explain in \cref{sec:dis-3}, not only is our framework more general, but it also works for non-convex problems. 

A second issue with the approach in \cite{Scutari_NEXT_2016} is that the objective in many applications may not have the required smoothness. For example, the common assumption is of Lipschitz and bounded gradients as in \cite{Scutari_NEXT_2016}, but this may not hold; we will demonstrate this explicitly with the motivating application. In centralized convex optimization, apart from subgradient methods, proximal methods are used for non-smooth functions~\cite{Boyd_Proximal_2013}: e.g., a substitute is the Moreau envelope that is strongly convex and maintains the minimizer. We will develop a general scheme for continuous objective functions with non-Lipschitz unbounded gradients that takes \emph{any sequence of smooth approximations} as input, such as the Moreau envelope.

\paragraph{Motivating Example}
There is a trend for increased access-point deployment density coupled with the increasing usage of high-speed connections as well as fiber for backhaul~\cite{ZagerFibre2BTS_2012,AllevenBackhaul_2016} in modern wireless networks. Hence, it is feasible to envisage high-rate and low-latency communications based coordination between neighboring base-sites to implement distributed optimization methods for resource allocation. Even this only allows communication of locally relevant decision variables, and rules out communicating network-wide decision variables, as would be the case if one used the algorithm in \cite{Scutari_NEXT_2016}. We will use the resource allocation problem, solved via the Network Utility Maximization framework~\cite{Chiang_NUMDecomp_2006} with some specific use cases described in \cite{VJ_DLSchedule_2009,VJ_ResourceAlloc_2009}, as our motivating example. In the one-shot weighted sum-rate maximization problem derived from the decomposition of network utility across time instances in a time-varying channels environment, we have to jointly decide the power transmitted by base stations (BSs) at each channel (power control), as well as the resource blocks (RBs) a BS should transmit data to its users (scheduling); jointly this is termed resource allocation. This problem is hard because the objective is non-convex, and the constraints are knapsack-like constraints; the latter are usually solved by relaxing to real-valued variables and then rounding. We will follow the same approach, and concentrate on obtaining (locally) optimal solutions to the relaxed problem. While this problem for the single-cell is well characterized \cite{VJ_DLSchedule_2009}, the solution to even the multi-cell power control problem with interference impacts remains unresolved \cite{Chiang_PowCtrl_2008}.

Given the difficulty of solving the multi-cell resource allocation problem, existing methods in the literature use heuristic approaches such as decomposition of the problem followed by greedy algorithms that lead to sub-optimal solutions \cite{Venturino_MultiPowAlloc_2009}, or make strong assumptions on the network interference graph \cite{Shen_MultiPowAlloc_2014}. Some algorithms are centralized \cite{Yassin_Centralized_2016}, which is an impractical assumption in a realistic scenario. Prior work  \cite{Schmidt_Distributed_2009} also utilizes \emph{interference prices} to solve the multi-cell power control problem, where each BS-user pair maximizes its own utility minus the sum of marginal ``costs" to all other users with increase in its power. This literature only considers power control and not scheduling. Also, though having extensions in multi-channel settings, only one receiver is considered under each transmitter, and users need to sequentially broadcast their interference prices to ensure convergence in the multiple-input single-output case. In \cite{Lei_PowCtrl_2008}, a distributed power control and scheduling algorithm is proposed; $\ep$-optimality is established for the grid network with a $K$-hop interference model.

In this paper, we generalize the results in \cite{Scutari_NEXT_2016} by resolving the two issues mentioned earlier. For the first issue, we exploit the separability of the decision variables and the objectives' partial dependencies to reduce the storage and communication needs. Although all components of the decision variable are entangled via each node's objective, we show that each component can be maintained and optimized within a local network -- this is different from directly applying \emph{NEXT} to the setting, and its convergence is not obvious from \cite{Scutari_NEXT_2016}. Secondly, inspired by the proximal method, we use a series of functions with Lipschitz gradients to approximate the original objective, and significantly relax the smoothness assumptions made in \cite{Scutari_NEXT_2016}. We show that as long as the series of approximation functions approach the original function slowly enough, we are still guaranteed to obtain stationary solutions in many situations. While following the same steps as \emph{NEXT} when the gradients are Lipschitz, our algorithm appears to have superior numerical stability over \emph{NEXT} with non-Lipschitz gradients. We establish convergence for our algorithm with the proposed two generalizations under the condition of no unbounded gradients on the boundary. We then apply the results and algorithms to the multi-cell resource allocation problem in many different ways, which
gives numerous algorithms with provable convergence to locally optimal solutions. Last but not least, we give a stochastic approximation interpretation of \emph{NEXT} in \cref{sec:sa}, where we provide an alternative proof of \emph{NEXT} and discuss its relation to \cite{Bianchi_DistGrad_2013}.

The rest of the paper is organized as follows. We describe the distributed optimization problem setup and the idea of \emph{NEXT} in \cref{sec:pre}. Then we present our generalized algorithm with localization and proximal approximations in \cref{sec:main}. In \cref{sec:dis}, we discuss the effect of localization, as well as address the practicability issues of our algorithm. We give examples where gradients are unbounded such that \emph{NEXT} fails to converge to the correct solution while our algorithm does. We describe the application to the multi-cell resource allocation problem in \cref{sec:app}. We discuss simulation results for the approximation functions and resource allocation application in \cref{sec:sim}, and conclude in \cref{sec:conc}.

%%%%%%%%%%%%%%%%%%%%%%%%%%%%%%%%%%%%%%%%%%%%%%%%%%
%%%%%             Preliminaries              %%%%%
%%%%%%%%%%%%%%%%%%%%%%%%%%%%%%%%%%%%%%%%%%%%%%%%%%

\section{Preliminaries}\label{sec:pre}
In this section, we give the system model of distributed non-convex optimization and assumptions. The bulk of the setup directly follows from \cite{Scutari_NEXT_2016}. Consider a network $\Nc=\{1,\dots,I\}$ that consists of $I$ nodes. We aim to solve an optimization problem of the form
\beq\label{4}
\min_{\xB\in\Kc}\quad U(\xB)=F(\xB)+G(\xB)=\sum_{i=1}^If_i(\xB)+G(\xB),
\eeq
where all $f_i$'s are $C^1$ smooth but can be non-convex, and $G$ is convex but may be non-smooth. The goal is to let these nodes cooperatively solve the problem in a distributed fashion. Therefore, each $j\in\Nc$ maintains a copy of the entire decision variable $\xB$, referred to as $\xB_j$. Then \cref{4} is equivalent to solving the optimization problem
\beq\label{5}
\min_{\xB_j\in\Kc}\quad\sum_{i=1}^If_i(\xB_j)+G(\xB_j)
\eeq
at each $j\in\Nc$ subject to the constraint that all nodes agree on their optimal choices, i.e., we enforce
\beq\label{6}
\xB_1=\xB_2=\dots=\xB_I.
\eeq
In the context of distributed optimization, node $i$ only has the information of $f_i$. It would require communication between the nodes to solve the problem in \cref{5}-\cref{6}.

Below are the standard assumptions on the objective functions and the constraint set.\\
{\bf Assumption A}\\
{\bf (A1)} The set $\Kc\in\R^d$ is closed and convex;\\
%{\bf (A2)} $f_i\in C^1$ for all $i$;\\
%{\bf (A2)} $\gd f_i$ is Lipschitz continuous for all $i$;\\
%{\bf (A3)} $\gd F$ is bounded by $L_F$ for all $\xB\in\Kc$;\\
{\bf (A2)} $G$ is convex with bounded subgradient $L_G$ for all $\xB\in\Kc$;\\
{\bf (A3)} $U$ is coercive, that is, $\lim_{\xB\in\Kc,|\xB|\ra\infty}U(\xB)=\infty$; based on this we can effectively assume that $\mathcal{K}$ is compact;\\
{\bf (A4)} $f_i$'s have bounded gradients, i.e. $\exists\es B$ s.t. $\|\gd f_i(\xB)\|<B$ for all $\xB\in\Kc$.

The set of nodes $\Nc$ along with a set of undirected edges $\Ec$ form a graph $\Gc=(\Nc,\Ec)$. This graph captures how communications take place -- node $i$ and $j$ can only communicate if $(i,j)\in\Ec$. $\Gc$ is assumed to be connected to foster communication between the nodes; otherwise, the problem is generally unsolvable\footnote{All results can be trivially extended to directed time-varying graphs: see \cite{Scutari_NEXT_2016} for \emph{NEXT} and the Appendix for our algorithm. For easy of presentation we adopt the above settings.}.

Our methods follow the solution scheme of \emph{NEXT}. In the \emph{NEXT} algorithm, each node performs a local convex optimization, and then some information will be exchanged in the network. At a high level, the first step is the ``descent step" and the second is the ``consensus step;" the two steps are iteratively applied to obtain the solution \cite{Scutari_NEXT_2016}. In the first step of time $n$, the node $i$ solves a convex approximation of the whole objective function by convexizing its own objective function $f_i$ \emph{parametrized by} the current iterate $\xB_i[n]$ to be a strongly convex surrogate function $\ft_i(\bullet;\xB_i[n])$, while linearizing the sum of other nodes' objective functions $\sum_{j\neq i}f_j$. We assume the surrogate function satisfies the following assumption:
\\
{\bf Assumption F}\\
%{\bf (F1)} $\ft_i(\bullet;\xB)$ is uniformly strongly convex with constant $\tau_i>0$;\\
%{\bf (F2)} $\gd\ft_i(\xB;\xB)=\gd f_i(\xB)$ for all $\xB\in\Kc$;\\
%{\bf (F3)} $\gd\ft_i(\xB;\bullet)$ is uniformly Lipschitz continuous.
{\bf (F1)} $\ft_i(\bullet;\xB)$ is convex;\\
{\bf (F2)} $\gd\ft_i(\xB;\xB)=\gd f_i(\xB)$ for all $\xB\in\Kc$;\\
{\bf (F3)} Either $\ft_i(\bullet;\xB)$'s are coercive for all $\xB\in\Kc$ and $i\in\Nc$ or $G(\bullet)$ is coercive.

The result established is convergence to the stationary solutions, whose definition is given as follows.
\begin{definition}
A point $\xB^*$ is a stationary solution of Problem \cref{4} if a subgradient $g\in\pa G(\xB^*)$ exists such that $(\gd F(\xB^*)+g)^T(\yB-\xB^*)\geq0$ for all $\yB\in\Kc$.
\end{definition}

%%%%%%%%%%%%%%%%%%%%%%%%%%%%%%%%%%%%%%%%%%%%%%%%%%
%%%%%              Main Result               %%%%%
%%%%%%%%%%%%%%%%%%%%%%%%%%%%%%%%%%%%%%%%%%%%%%%%%%

\section{Main Result}\label{sec:main}
In this section, we introduce our two generalizations -- localization and approximations of \emph{NEXT}, which are the key contribution of this paper. We will first describe our settings for the generalizations, and then give the revised algorithm and the convergence theorem.

\subsection{Localization Setting and Assumptions}\label{sec:main-1}
Consider the setup where there are $M$ local dependency sets $\Nc_1,\dots,\Nc_M$, and the local objective function of node $i$, i.e. $f_i$, only depends on a common variable $\xB^c$ and the local variables $\xB^m$ whenever $i\in\Nc_m$. To be more specific, the decision variables can be split into $M+1$ parts $\xB\triangleq(\xB^1,\dots,\xB^M,\xB^c)$ where $M$ is an arbitrary positive integer. For all $m\in[M]$ (which stands for $1,\dots,M$), define the local dependency set $\Nc_m\triangleq\{i:f_i\text{ is a function of }\xB^m\}\rUe\Nc$. Denote the sizes of $\Nc_1,\dots,\Nc_M$ as $I_1,\dots,I_M$. We can think $\Nc$ itself as the $(M+1)$-th local dependency set $\Nc_{M+1}$ and every $f_i$ may depend on $\xB^c\triangleq\xB^{M+1}$. We adopt the convention that whenever $M+1$ appears in either superscript or subscript, it means $c$ or anything associated with the original network $\Gc$; this includes $I_{M+1}\triangleq|\Nc|=I$. In the other direction, define the dependent part $\Sc_i\triangleq\{m:f_i\text{ is a function of }\xB^m,m\in[M+1]\}$. Note that for all $i$, $\Sc_i$ contains $M+1$. Also, when $\Sc_i$ appears in the superscript of a variable, for example $\xB$, it means the vector concatenated from all $\xB^k$'s such that $k\in\Sc_i$, i.e. $\xB^{\Sc_i}\triangleq(\xB^k)_{k\in\Sc_i}$. Concrete examples of local dependency sets are provided in \cref{sec:dis-1}. Furthermore, we have the following assumptions:\\
\noindent{\bf Assumption L}\\
{\bf (L1)} Besides the fact that $f_i$ depends on $\xB^m$ only if $i\in\Nc_m$ for $m\in[M+1]$, $G$ also only depends on $\xB^c$;\\
{\bf (L2)} The set $\Kc$ is \emph{separable}, i.e. it is the direct product of $(M+1)$ convex sets in proper subspaces
$\Kc=\Kc_1\times\dots\times\Kc_{M+1}\rU\R^d$ such that $\xB^m\in\Kc_m\rU\R^{d_m}$ for all $m\in[M+1]$ if and only if $\xB\triangleq(\xB^1,\dots,\xB^{M+1})\in\Kc$;\\
{\bf (L3)} The local network $\Gc_m=(\Nc_m,\Ec_m=\{(i,j)\in\Ec:i\in\Nc_m\text{ and }j\in\Nc_m\}$ is connected for all $m\in[M]$\footnote{We add nodes to get connectedness if it does not hold. More on this in \cref{sec:dis-2}}, we already assume the connectedness of $\Gc_{M+1}$ in \cref{sec:pre};\\
{\bf (L4)} For all $m\in[M+1]$ there is a matrix $\WB^m$ associated with $\Nc_m$ -- each entry is non-zero if and only if there is a corresponding edge in $\Ec_m$, and all non-zero entries must be greater than or equal to some fixed $\vartheta>0$. Equivalently, for all rows $i\not\in\Nc_m$ and all columns $j\not\in\Nc_m$, we have $(\WB^m)_{i,:}=0$ and $(\WB^m)_{:,j}=0$. In addition, $\WB^m$ is doubly-stochastic after deleting these zero rows and columns. $\WB^{M+1}$ does not contain zero row or column, and corresponds to the $\WB$ defined in \emph{NEXT}.\\
Note that $\Nc_m$'s do not have to be disjoint and form a partition of $\Nc$. Having $\Nc_m\dU\Nc_l\neq\emptyset$ for some $m\neq l$ is allowed. Without loss of generality we can assume they form a covering of $\Nc$; i.e., $\buU_{m=1}^{M+1}\Nc_m=\Nc$. When there exists a part $\xB^c$ on which all $f_i$'s and $G$ depend, then $\Nc_{M+1}$ is $\Nc$ itself; otherwise, nodes outside the union do not have any cross-dependence with all the rest and can be optimized themselves.

\subsection{Proximal Approximations Setting and Assumptions}\label{sec:main-2}
In contrast to the common setting in the optimization literature, we consider a scenario where $\gd f_i$ is not Lipschitz continuous for some $i$. Our idea to relax this Lipschitz assumption is to use a sequence of functions whose gradients \emph{are} Lipschitz continuous to approximate $f_i$. This is commonly known as the proximal approximation method in the literature of convex optimization, except that our objective is now non-convex.

To be more specific, we want to find a series of functions $\{f^*_{i,n}\}_{n\geq 1}$ such that $\gd f^*_{i,n}$ is globally Lipschitz continuous with constant $L_{i,n}$ and that as $n\ra\infty$ we have $f^*_{i,n}\ra f_i$ pointwise, or even better -- uniformly. Then at iteration $n$ we can use the well-behaved $f^*_{i,n}$ instead of $f_i$. We will see that as long as the schedule of $\{L_{i,n}\}_{n\geq 1}$ satisfies certain conditions, we can still have convergence to optimality.

The following assumption is the key feature of $f^*_{i,n}$ that our algorithm needs for convergence to optimality.\\
{\bf Assumption N}:\\
{\bf (N1)} $\gd f^*_{i,n}$ is globally Lipschitz continuous with constant $L_{i,n}$;\\
{\bf (N2)} $\lim_{n\ra\infty}f^*_{i,n}\ra f_i$ uniformly, and $\lim_{n\ra\infty}\gd f^*_{i,n}\ra\gd f_i$ pointwise.

We will also need a surrogate function of $f^*_{i,n}$, denoted as $\ft^*_{i,n}$. These surrogate functions should satisfy Assumption {F\textprime} similar to Assumption F given as follows.\\
{\bf Assumption {F\textprime}}\\
{\bf (F1\textprime)} $\ft^*_{i,n}(\bullet;\xB)$ is uniformly strongly convex with constant $\tau_{i,n}>0$;\\
{\bf (F2\textprime)} $\gd\ft^*_{i,n}(\xB;\xB)=\gd f^*_{i,n}(\xB)$ for all $\xB\in\Kc$;\\
{\bf (F4\textprime)} $\gd\ft^*_{i,n}(\xB;\bullet)$ is uniformly Lipschitz continuous with constant $L_{i,n}$.

As \cite{Scutari_NEXT_2016} does not have approximation functions, they assume Assumption {F\textprime} for the surrogate function of $f_i$, i.e. $\ft_i$, with fixed $\tau_i>0$ and $L_i<\infty$. Here our assumptions are more general than \emph{NEXT} in that we do not require the strong convexity of $\ft_i(\bullet;\xB)$ and the Lipschitz continuity of $\gd\ft_i(\xB;\bullet)$. In particular, we can have $\lim_{n\ra\infty}\tau_{i,n}=0$ and $\lim_{n\ra\infty}L_{i,n}=\infty$ as long as the schedules of $\{\tau_{i,n}\}_n$ and $\{L_{i,n}\}_n$ satisfies certain conditions. We do have an additional Assumption F3. However, that assumption is implicitly implied if $\ft_i(\bullet;\xB)$'s are strongly convex; thus, we do not lose any generality from \cite{Scutari_NEXT_2016}. Also for simplicity, we assume $L_{i,n}$ is a non-decreasing sequence. We denote $F^*_n=\sum_{i=1}^If^*_{i,n}$.

\subsection{Localized Proximal Inexact NEXT and Main Convergence Theorem}\label{sec:main-3}
Our localized proximal inexact version of \emph{NEXT}, which requires less storage and communication, and allows unbounded non-Lipschitz objective gradients, is presented in \cref{a6}. All operations that contain index $i$, i.e. Line 5, 6, 7, 9, 10, and 11, are performed for all $i\in\Nc$. Also, except Line 5, the operations have a superscript $k$, and are performed for all $k\in\Sc_i$. In Line 5,
\beq\label{8-2}
\xBt^*_i(\xB_i[n],\piBt_i[n])=\underset{\mathclap{\xB_i\in\prod_{k\in\Sc_i}\Kc_k}}{\arg\min\quad}\left[\ft^*_{i,n}(\xB_i;\xB_i[n])+\sum_{k\in\Sc_i}\piBt^k_i[n]^T(\xB^k_i-\xB^k_i[n])+G(\xB^c_i)\right],
\eeq
with $\xB_i=(\xB^k_i)_{k\in\Sc_i}=\xB^{\Sc_i}_i$ and $
\piBt_i=\{\piBt^k_i:k\in\Sc_i\}=\piBt^{\Sc_i}_i$. Note that Line 6 along with Line 5 means that one solves the optimization problem in \cref{8-2} with accuracy $\ep_i[n]$. Also denote $\gd f^*_{i,n}[n]=\gd f^*_{i,n}(\xB_i[n])$. The output of the algorithm is the concatenation of $[\xBb^m]_{m\in[M+1]}$, where $\xBb^m\triangleq\frac{1}{I_m}\sum_{i\in\Nc_m}\xB^m_i$.

\begin{algorithm}
\caption{Localized Proximal Inexact NEXT}
\label{a6}
%\begin{algo}[Localized inexact NEXT]\label{a6}
%\hfill
%\begin{breakablealgorithm}
%{
\begin{algorithmic}[1]
\State{Initialization: $\xB^k_i[0]\in\Kc_k$, $\yB^k_i[0]=\gd_{\xB^k}f^*_{i,0}[0]$, $\piBt^k_i[0]=(I_k-1)\yB^k_i[0]$}
\While{$\xB[n]$ does not satisfy the termination criterion}
\State{$n\la n+1$}
\State{\textit{Local SCA optimization: for all $i\in\Nc$ and for all $k\in\Sc_i$}}
\State{$\xBt_i[n]=\xBt^*_i(\xB_i[n],\piBt_i[n])$}
\State{Find a $\xB^{k,inx}_i[n]$ s.t. $\|\xBt^{k}_i[n]-\xB^{k,inx}_i[n]\|\leq\ep^{k}_i[n]$}
\State{$\zB^{k}_i[n]=\xB^{k}_i[n]+\al[n](\xB^{k,inx}_i[n]-\xB^{k}_i[n])$}
\State{\textit{Consensus update: for all $i\in\Nc$ and for all $k\in\Sc_i$}}
\State{$\xB^{k}_i[n+1]=\sum_{j\in\Nc_k}w^k_{ij}\zB^k_j[n]$}
\State{$\yB^k_i[n+1]=\sum_{j\in\Nc_k}w^k_{ij}\yB^k_j[n]+(\gd_{\xB^k}f^*_{i,n+1}[n+1]-\gd_{\xB^k}f^*_{i,n}[n])$}
\State{$\piBt^k_i[n+1]=I_k\cdot\yB^k_i[n+1]-\gd_{\xB^k}f^*_{i,n+1}[n+1]$}
\EndWhile
\Ensure{$\xB[n]\triangleq(\xBb^1[n],\dots,\xBb^M[n],\xBb^c[n])$}
\end{algorithmic}
%}
%\end{breakablealgorithm}
%\end{algo}
\end{algorithm}

\emph{NEXT} is a special case of \cref{a6}. First, \emph{NEXT} would either be the case where $I_m=I$ for all $m\in[M]$, or the equivalent case where $\xB$ consists of one part $\xB^c$ only. In the \emph{NEXT} algorithm, node $i$ keeps the whole $\xB$, and communicates the whole $\xB$ as well with all of its neighbors; the same also applies to the variables $\yB$ and $\piBt$. On the contrary, under our localization setting, node $i$ turns out only has to maintain $\xB^{\S_i}$ (also $\yB^{\S_i}$ and $\piBt^{\S_i}$) in \cref{a6}; moreover, it only communicates $\xB^{\Sc_i\dU\Sc_j}$ (also $\yB^{\S_i\dU\Sc_j}$) with its neighbor $j$. This could mean substantial savings for memory and communication, especially when the dependency structure is sparse. \cref{ex5-2} provides an example that explicitly specifies the storage and communication needed before and after localization. Secondly, \emph{NEXT} would be when $f^*_{i,n}=f_i$, $L_{i,n}=L_i<\infty$, and $\tau_{i,n}=\tau_i>0$ for all $i$ and $n$. \cref{a6} also accommodates a bigger class of objective functions and supports a more flexible choice of the schedules of $\{L_{i,n}\}$ and $\{\tau_{i,n}\}$.

\begin{rmk}
The insight of the variables $\yB$ and $\piBt$ can be found in \cite{Scutari_NEXT_2016}. In short, node $i$ needs the information of $\sum_{j\neq i}\gd f_j$ at the current iterate $\xB_i[n]$ when linearizing others' objectives. For this purpose, node $i$ tracks the average of gradients $\frac{1}{I}\sum_j\gd f_j$ using $\yB$, keeps its neighbors updated, and obtains $\piBt_i$, an approximation of $\sum_{j\neq i}\gd f_j(\xB_i[n])$, by subtracting its own gradient from $\yB$. In our localized algorithm, only nodes in $\Nc_m$ participate in the decision of $\xB^m$ and the tracking of $\yB^m$ and $\piBt^m$, which requires some subtle treatment.
\end{rmk}

\begin{rmk}
Note that \cref{a6} is not the result of the direct application of \emph{NEXT} in the localization setting given in \cref{sec:main-1}. Suppose $m'$ is such that $i\not\in\Nc_{m'}$ and under the connectedness assumption. Different from \emph{NEXT}, node $i$ no longer keeps the gradient trace $\piBt^{m'}_i$ in \cref{a6}, nor does the variable $\xB^{m'}_i$ appear in the objective of the local optimization. The fact that convergence can still be obtained with \cref{a6} is not obvious from \cite{Scutari_NEXT_2016}.
\end{rmk}

The following theorem generalizes the convergence to stationary solutions result of \emph{NEXT} when gradients are bounded, specifying restrictions on the schedules of $\{L_{i,n}\}$, $\{\tau_{i,n}\}$. When gradients are unbounded, stricter constraints are needed.

\begin{theorem}\label{t7}
For all $m\in[M+1]$, let $\{\xB^m[n]\}_n\triangleq\{(\xB^m_i[n])_{i\in\Nc_m}\}_n$ be the sequences generated by \cref{a6}, $\{\xBb^m[n]\}_n\triangleq\left\{\frac{1}{I_m}\sum_{i\in\Nc_m}\xB^m_i[n]\right\}_n$ be their averages, and $\{\xBb[n]\}_n=\{(\xBb^1[n],\dots,\xBb^M[n],\xBb^c[n])\}_n$ be the ensemble of averages. Let $\Lmax_n=\max_iL_{i,n}$, $\taumin_n=\min_i\tau_{i,n}$, $\ep[n]=\min_{i,k}\ep^k_i[n]$, $\zeta_n=\max_{\xB\in\Kc}\|F^*_n(\xB)-F^*_{n-1}(\xB)|$, $\eta_{i,n}=\max_{\xB\in\Kc}\|\gd f^*_{i,n}(\xB)-\gd f^*_{i,n-1}(\xB)\|$, and $\etamax_n=\max_i\eta_{i,n}$.
%$\lim_{n\ra\infty}|L_{i,n+1}-L_{i,n}|=0\es\forall\es i$
%$\lim_{n\ra\infty}\al[n]\left(\frac{\Lmax_n}{\taumin_n}\right)^2=0$
\begin{enumerate}[label=(\alph*),leftmargin=*]
\item
Suppose Assumptions A, F, L, N, and {F\textprime} hold\footnote{Note that Assumption N1 requires $\lim_{n\ra\infty}L^{\min}_n=\infty$ where $L^{\min}_n=\min_iL_{i,n}$ with $f^*_{i,n}$ given as \cref{10}. For any other choices of $f^*_{i,n}$ other than the double Moreau envelope function \cite{LasryLions_DoubleEnv_1986}, Assumption N1 requires $\lim_{n\ra\infty}L_{i,n}=\infty$ for any $i$ such that $\gd f_i$ is non-Lipschitz continuous.}. Also, $\al[n]\in(0,1]$ is such that $\sum_{n=0}^\infty(\Lmax_n)^3\left(\frac{\al[n]}{\taumin_n}\right)^2<\infty$, $\sum_{n=0}^\infty\taumin_n\al[n]=\infty$, $\sum_{n=0}^\infty\al[n]\Lmax_n\ep[n]<\infty$. Then (1) all sequences $\{\xB^m_i[n]\}_n\es\forall\es m\in[M+1]$ asymptotically agree, i.e., $\lim_{n\ra\infty}\|\xB^m_i[n]-\xBb^m[n]\|=0\quad\forall\es i\in\Nc_m$; (2) $\{\xBb[n]\}_n$ is bounded, and its limit points are stationary solutions of the original problem.
\item
If we do not assume Assumption A4, but we have $\lim_{n\ra\infty}\al[n]\frac{(\Lmax_n)^5}{(\taumin_n)^3}=0$, $\lim_{n\ra\infty}\frac{\etamax_n}{\taumin_n}=0$, $\sum_{n=0}^\infty\frac{\al[n]\Lmax_n\etamax_n}{\taumin_n}<\infty$, $\sum_{n=0}^\infty\zeta_n<\infty$, and $\lim_{n\ra\infty}\gd F^*_n\ra\gd F$. Then we still have results (1) and first part of (2). When the limit points lie in the interior of $\Kc$, or $\gd F$ is bounded on those limit points, they will be stationary solutions.
\item
Continuing (b), if a limit point $\xBb^\infty$ lies on the boundary of $\Kc$ and $\|\gd F(\xBb^\infty)\|=\infty$, then the definition of stationary solution does not apply, and $\xBb^\infty$ could be a local minimum, or a point that is not a local minimum.
\end{enumerate}
\end{theorem}
\begin{proof}
See \cref{app:b}.
\end{proof}
\begin{rmk}
Note that the conditions in (a) imply $\lim_{n\ra\infty}\al[n]\frac{(\Lmax_n)^3}{(\taumin_n)^3}=0$. In (b) we apply the stricter $\lim_{n\ra\infty}\al[n]\frac{(\Lmax_n)^5}{(\taumin_n)^3}=0$ as well as other constraints.
\end{rmk}
\noindent We can see that although all parts of the variable $\xB$ are coupled through all the objectives $f_i$'s, the decision on $\xB^m$ is actually dictated by the nodes in $\Nc_m$. We give a class of approximation functions that satisfies Assumption N, namely the Lasry-Lions envelope or double envelope \cite{LasryLions_DoubleEnv_1986}, in \cref{sec:dis-4}. Note that the convergence of \cref{a6} only requires the Lipschitz constants of $\{\gd f^*_{i,n}\}_{n\geq 1}$ follow certain schedules, but $\{f^*_{i,n}\}_{n\geq1}$ can be any sequence of functions that approaches $f_i$ in the limit, and does not need to be double envelope. We will give many examples of such sequences in \cref{sec:dis-6} and \cref{sec:app-4}. Also, an example of the schedules of the the parameters $\{\Lmax_n\}_n$, $\{\taumin_n\}_n$, $\{\al[n]\}_n$, and $\{\ep[n]\}_n$ that satisfies the related conditions in \cref{t7} (a) and (b) using $p$-series is given in \cref{sec:dis-5}. We will discuss the unbounded gradient issue in full detail in \cref{sec:dis-6}, study different examples there, and show the numerical stability of our algorithm for these examples in \cref{sec:sim-1}.

%%%%%%%%%%%%%%%%%%%%%%%%%%%%%%%%%%%%%%%%%%%%%%%%%%
%%%%%              Discussions               %%%%%
%%%%%%%%%%%%%%%%%%%%%%%%%%%%%%%%%%%%%%%%%%%%%%%%%%

\section{Discussions}\label{sec:dis}
In this section, we discuss some details of our proposed algorithm. Two localization examples that compare the effect of localization are given in \cref{sec:dis-1}. We comment on how to exploit localization when Assumption L3 is violated in \cref{sec:dis-2}. Comparison to the localization algorithm proposed by Hu \textit{et al.} in \cite{Hu_LocalDist_2017} is given in \cref{sec:dis-3}. \cref{sec:dis-4} and \cref{sec:dis-5} summarize the property of double envelope that we will use and provide the $p$-series example for the parameters $(L,\tau,\al,\ep)$, respectively.

\subsection{Examples of Localization}\label{sec:dis-1}
In this subsection we give a few examples to illustrate the concept and effectiveness of localization.

\begin{figure}
\centering
\subfloat[The communication graph.]{
\label{fig_local1a}\includegraphics[scale=0.4]{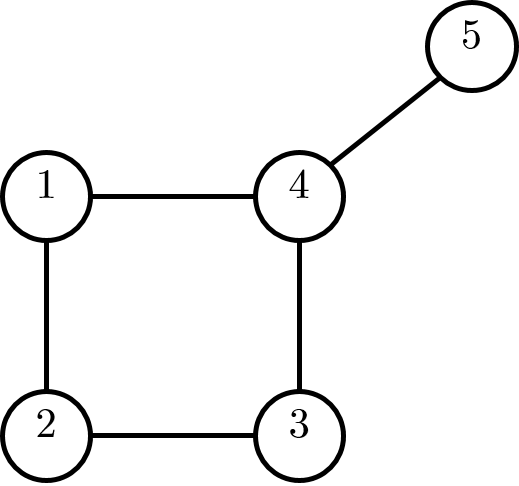}}
\subfloat[Local dependency sets for \cref{ex5-1}.]{
\label{fig_local1b}\includegraphics[scale=0.4]{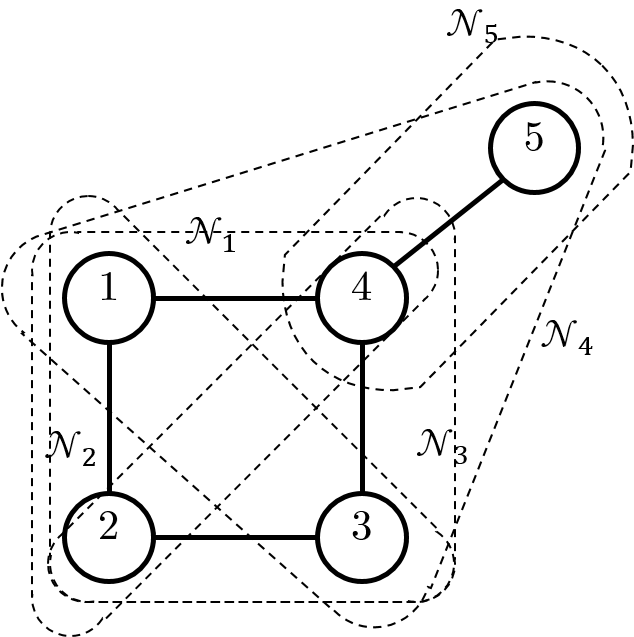}}
\subfloat[Local dependency sets for \cref{ex5-2}]{
\label{fig_local1c}\includegraphics[scale=0.4]{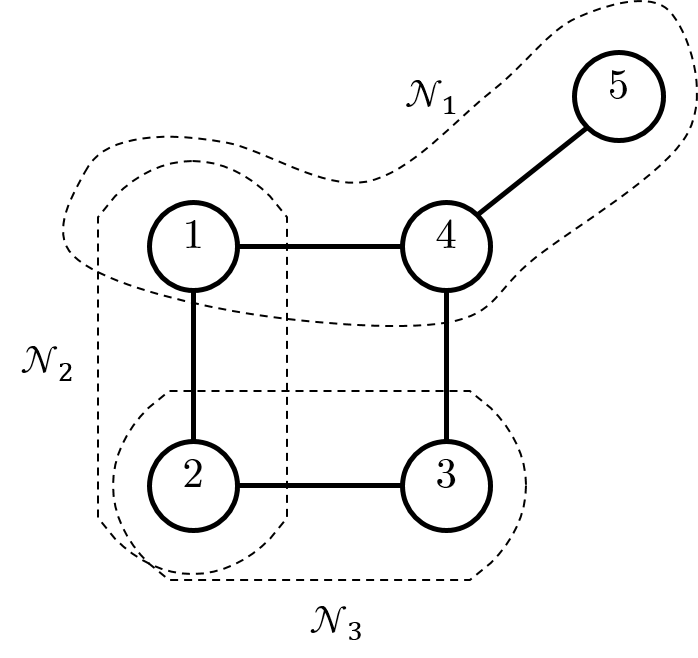}}
\caption{Graphical illustrations of the local dependency set examples.}
\label{fig_local1}
\end{figure}

\begin{ex}\label{ex5-1}
Consider the communication graph given in \cref{fig_local1} (a). In this example we consider the most common situation, where the dependency is directly given by the communication graph. Specifically, every node $i$ has a corresponding variable part $\xB^i$ in the whole variable tuple $\xB$. Moreover, the objective at node $i$ only depends on its own variable $\xB^i$ and its neighbors' variables $\{\xB^j:j\in N(i)\}$. In other words, the local dependency set $\Nc_i$ corresponding to $\xB^i$ is equal to $Nb(i):=N(i)\uU\{i\}$. $G$ is assumed to be $0$. This situation also arises in the resource allocation application we discuss in \cref{sec:app}.

For this communication graph, this situation will be the following. The variable $\xB=(\xB^1,\xB^2,\xB^3,\xB^4,\xB^5)$ can be split into five parts, and we have $f_1=f_1(\xB^1,\xB^2,\xB^4)$ ($f_1$ only depends on $\xB^1$, $\xB^2$, and $\xB^4$), $f_2=f_2(\xB^1,\xB^2,\xB^3)$, $f_3=f_3(\xB^2,\xB^3,\xB^4)$, $f_4=f_4(\xB^1,\xB^3,\xB^4,\xB^5)$, and $f_5=f_5(\xB^4,\xB^5)$. The local dependency sets for this example are given in \cref{fig_local1} (b). Before localization, node 1 needs to store all of $\xB^1$ to $\xB^5$ and communicates this information to its neighbors $\{2,4\}$. In contrast, after localization node 1 only keeps $\xB^1$, $\xB^2$, and $\xB^4$, and exchanges some of this information with $\{2,4\}$. In addition, node 1 does not maintain the information of $\yB^3$, $\yB^5$, or $\piBt^3$, $\piBt^5$, either. Before localization, \emph{NEXT} performs 
\[
\underset{\xB_5\in\Kc}{\arg\min}\ft_5(\xB_5;\xB_5[n])+\piBt_5[n]^T(\xB_5-\xB_5[n])
\]
in the local optimization step at node 5, while after localization \cref{a6} performs the following
\[
\underset{(\xB_5^4,\xB_5^5)\in\Kc_4\times\Kc_5}{\arg\min}\ft_5(\xB_5^4,\xB_5^5;\xB_5^4[n],\xB_5^5[n])+\piBt_5^4[n]^T(\xB_5^4-\xB_5^4[n])+\piBt_5^5[n]^T(\xB_5^5-\xB_5^5[n]).
\]
The first terms of the two operations are actually the same. Indeed, since $f_5$ only depends on $\xB^4$ and $\xB^5$, we can definitely choose a surrogate function that only depends on the local storage of $\xB^4$ and $\xB^5$. The difference is node 5 does not have to store, say $\piBt_5^1$, and optimize $\xB_5^1$ based on $\piBt_5^1$. The variable $\piBt_5^1$ is asymptotically tracking $\sum_{j\neq5}\gd_{\xB^1}f_j$. Our localization result says that since node 5 does not have any preference on $\xB^1$, it only follows others' decision of $\xB^1$ through $\piBt_5^1$. Hence, it is unnecessary for node 5 to maintain $\xB_5^1$ and $\piBt_5^1$ -- it can just take other nodes' decision at the end, even through $\xB^1$ and $\xB^5$ are coupled through, say $f_4$. This saves nodes from unnecessary memory storage and communication in the presence of sparse dependency structure, which is crucial when the network is large.
\end{ex}

\begin{ex}\label{ex5-2}
We consider the same communication network but a different dependency structure: $f_1=f_1(\xB^1,\xB^2)$, $f_2=f_2(\xB^2,\xB^3)$, $f_3=f_3(\xB^3)$, $f_4=f_4(\xB^1)$, and $f_5=f_5(\xB^1)$. There are three local dependency sets in this example as shown in \cref{fig_local1} (c). The variable parts that are stored at each node and the communication required to other node for each part are given in \cref{tab:5-1}. Naturally, before localization every node keeps all parts and communicates them with all of its neighbors. On the contrary, the storage and communication required are greatly reduced after localization. Notice that even though node 3 is directly linked with node 4, they do not communicate as there is no common part they both depend on.
\end{ex}

\renewcommand{\arraystretch}{1.2}
\begin{table}[tbhp]
\caption{Storage and communication required in \cref{ex5-2}.}
\label{tab:5-1}
\centering
\small
\begin{tabular}{|c|c|c|}
\hline
Node & Before localization & After localization \\\hline
1 & $\xB^1$, $\yB^1$, $\piBt^1$, $\xB^2$, $\yB^2$, $\piBt^2$, $\xB^3$, $\yB^3$, $\piBt^3$ to 2, 4 & $\xB^1$, $\yB^1$, $\piBt^1$ to 4, $\xB^2$, $\yB^2$, $\piBt^2$ to 2 \\\hline
2 & $\xB^1$, $\yB^1$, $\piBt^1$, $\xB^2$, $\yB^2$, $\piBt^2$, $\xB^3$, $\yB^3$, $\piBt^3$ to 1, 3 & $\xB^2$, $\yB^2$, $\piBt^2$ to 1, $\xB^3$, $\yB^3$, $\piBt^3$ to 3 \\\hline
3 & $\xB^1$, $\yB^1$, $\piBt^1$, $\xB^2$, $\yB^2$, $\piBt^2$, $\xB^3$, $\yB^3$, $\piBt^3$ to 2, 4 & $\xB^3$, $\yB^3$, $\piBt^3$ to 2 \\\hline
4 & $\xB^1$, $\yB^1$, $\piBt^1$, $\xB^2$, $\yB^2$, $\piBt^2$, $\xB^3$, $\yB^3$, $\piBt^3$ to 1, 3, 5 & $\xB^1$, $\yB^1$, $\piBt^1$ to 1, 5 \\\hline
5 & $\xB^1$, $\yB^1$, $\piBt^1$, $\xB^2$, $\yB^2$, $\piBt^2$, $\xB^3$, $\yB^3$, $\piBt^3$ to 4 & $\xB^1$, $\yB^1$, $\piBt^1$ to 4 \\\hline
\end{tabular}
\end{table}

\subsection{Relaxing Assumption L3}\label{sec:dis-2}
Our requirement of connectedness of $\Gc_m$ is not a strong assumption. If any $\Gc_m$ is not connected, we can always ``add" nodes as ``relays" into the local dependency set. For example, consider \cref{ex5-2} with $f_3$ revised to be a constant and $f_4$ revised as $f_4=f_4(\xB^1,\xB^3)$. Then $\Gc_3=(\{2,4\},\phi)$ is not connected. We can add node 3 into $\Gc_3$ by making $f_3=f_3(\xB^3)$ where the dependency is actually trivial. Node 3 can thus relay the information of $\xB^3$ for node 2 and 4. Alternatively, node 1 can also do the relay job. Hence, we assume without loss of generality that the nodes have been added by some algorithm that might depends on network structure and communication requirements so that every $\Gc_m$ is connected.

\subsection{Comparison to Hu et al.}\label{sec:dis-3}
In \cite{Hu_LocalDist_2017}, a similar idea of localization for convex feasibility problems (CFPs) is proposed, where they also exploit the sparsity of the constraints to reduce the storage and communication required. Our framework is different from theirs in two aspects. First, in their framework each node $i$ owns a part of the variable tuple $\xB^i$, whose corresponding ``dependency network graph" $(\Nc_i,\{(i,j):j\in\Nc_i\})$ must be a subgraph of $i$'s local graph $(N(i),\{(j,k)\in\Ec:j\in N(i)\text{ and }k\in N(i)\})$ where $N(i)$ is $i$'s neighbors in the communication graph $\Gc$. On the other hand, in our framework each part of the variable tuple $\xB^m$ does not have specific relation with the nodes and the local dependency set $\Nc_m$ is only required to be a connected component in $\Gc$, which is more general in the sense that their framework is a sub-case of ours. Second, their dependency is built in the constraint sets, while ours is based on objective function's dependency. This difference arises from the nature of CFPs and optimization problems. Namely, we focus on solving the optimal solution for optimization problems while they aim to find the solution lying in the intersection of a batch of sets. Although one would be able to solve convex optimization with CFP algorithms \cite{Borwein_VA_2005}, it is still unclear whether this could be generalized to the case of non-convex optimization.

\subsection{An Example of Approximation Functions Satisfying Assumption N}\label{sec:dis-4}
The Lasry-Lions envelope or double envelope \cite{LasryLions_DoubleEnv_1986}\cite{Rockafellar_VarAna_1998} is a class of approximation functions that serves this purpose. We use this to illustrate the feasibility of our approach but also point out that any such sequence of functions satisfying the assumptions and conditions can be used instead.

\begin{definition}\label{d8}
The double envelope, or Lasry-Lions envelope \cite{LasryLions_DoubleEnv_1986}\cite{Rockafellar_VarAna_1998}, of a function $f$ is defined by
\beq\label{9}
f_{t,s}(x)=\sup_z\inf_y\left\{f(y)+\frac{1}{2t}\|z-y\|^2-\frac{1}{2s}\|x-z\|^2\right\},
\eeq
where $0<s<t<\infty$.
\end{definition}
\begin{fact}\label{f9}
If $f$ is lower bounded, then $\gd f_{t,s}(\cdot)$ is Lipschitz continuous with constant $\max\left\{\frac{1}{s},\frac{1}{t-s}\right\}$.
\end{fact}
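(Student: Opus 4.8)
The plan is to exploit the compositional structure of the double envelope in \eqref{9}. For each fixed $z$ the inner infimum over $y$ does not see the term $\tfrac{1}{2s}\|x-z\|^2$, so $f_{t,s}=e^s(e_t f)$ is the composition of the \emph{lower} Moreau envelope $e_t f(z)=\inf_y\{f(y)+\tfrac{1}{2t}\|z-y\|^2\}$ with the \emph{upper} Moreau envelope $e^s g(x)=\sup_z\{g(z)-\tfrac{1}{2s}\|x-z\|^2\}$. I would then bound the Hessian of $f_{t,s}$ on both sides, proving $\gd^2 f_{t,s}\succeq -\tfrac1s I$ and $\gd^2 f_{t,s}\preceq \tfrac{1}{t-s}I$, i.e. that $f_{t,s}$ is simultaneously $\tfrac1s$-semiconvex and $\tfrac{1}{t-s}$-semiconcave. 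A function with both properties is $C^{1,1}$ and its gradient is Lipschitz with constant equal to the larger of the two moduli, namely $\max\{1/s,\,1/(t-s)\}$; this is the standard semiconvex/semiconcave regularity fact underlying the Lasry-Lions envelope \cite{8,9}.

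The lower bound is immediate. Expanding the square and isolating the $x$-dependence gives $f_{t,s}(x)+\tfrac{1}{2s}\|x\|^2=\sup_z\{e_tf(z)-\tfrac{1}{2s}\|z\|^2+\tfrac1s\langle x,z\rangle\}$; for each fixed $z$ the bracket is affine in $x$, so the supremum is convex in $x$. Hence $f_{t,s}+\tfrac{1}{2s}\|\cdot\|^2$ is convex, which is exactly $\gd^2 f_{t,s}\succeq-\tfrac1s I$.

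The upper bound is the crux, and it uses the smoothing performed by the \emph{inner} envelope. First I would record that $e_t f$ is $\tfrac1t$-semiconcave: writing $\tfrac{1}{2t}\|z-y\|^2=\tfrac{1}{2t}\|z\|^2-\tfrac1t\langle z,y\rangle+\tfrac1{2t}\|y\|^2$ exhibits $e_tf(z)-\tfrac{1}{2t}\|z\|^2$ as an infimum of affine functions of $z$, hence concave, so $e_tf=c+\tfrac{1}{2t}\|\cdot\|^2$ with $c$ concave. Substituting into $f_{t,s}=e^s(e_t f)$ and completing the square in $z$, where $s<t$ forces $\alpha:=\tfrac1s-\tfrac1t>0$, yields the key identity $\tfrac{1}{2t}\|z\|^2-\tfrac{1}{2s}\|x-z\|^2=-\tfrac{\alpha}{2}\|z-\tfrac{x}{s\alpha}\|^2+\tfrac{1}{2(t-s)}\|x\|^2$, whence $f_{t,s}(x)=\tfrac{1}{2(t-s)}\|x\|^2+H(\tfrac{x}{s\alpha})$ with $H(u):=\sup_z\{c(z)-\tfrac{\alpha}{2}\|z-u\|^2\}=-e_{1/\alpha}(-c)(u)$. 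Since $-c$ is convex, $e_{1/\alpha}(-c)$ is the Moreau envelope of a convex function, hence convex and $C^{1,1}$; therefore $H$ is concave and $C^{1,1}$, and as $u=\tfrac{x}{s\alpha}$ is linear in $x$ the map $x\mapsto f_{t,s}(x)-\tfrac{1}{2(t-s)}\|x\|^2$ is concave. This is precisely $\gd^2 f_{t,s}\preceq\tfrac{1}{t-s}I$, and it simultaneously supplies the $C^{1,1}$ regularity that converts the two-sided Hessian bound into the claimed Lipschitz estimate.

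I expect the completing-the-square step to be the only delicate computation, while the main conceptual obstacle is recognizing that the leftover supremum $H$ is, up to sign, the Moreau envelope of the convex function $-c$, which is what forces concavity and hence the upper Hessian bound. A standing hypothesis I would state explicitly is that $f$ be minorized by a quadratic compatible with $t$ (prox-boundedness), so that $e_t f$ and every subsequent envelope is finite-valued; in our application the $f_i$ are continuous on a compact set, so this holds automatically.
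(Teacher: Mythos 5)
The paper offers no argument of its own for Fact~\ref{f9}: its ``proof'' is a one-line citation to Attouch and Az\'e \cite{3}. Your proposal is therefore doing something genuinely different from the paper — it supplies a self-contained proof — and the proof is correct; it is in essence the classical Lasry--Lions regularity argument that underlies the cited reference. I checked the two computations everything hinges on. For the lower bound, expanding $-\frac{1}{2s}\|x-z\|^2=-\frac{1}{2s}\|x\|^2+\frac{1}{s}\langle x,z\rangle-\frac{1}{2s}\|z\|^2$ indeed exhibits $f_{t,s}+\frac{1}{2s}\|\cdot\|^2$ as a supremum of functions affine in $x$, hence convex. For the upper bound, with $\alpha=\frac{1}{s}-\frac{1}{t}=\frac{t-s}{st}$ one gets $s^2\alpha=\frac{s(t-s)}{t}$, so the $\|x\|^2$ coefficients in your completing-the-square identity satisfy $-\frac{1}{2s^2\alpha}+\frac{1}{2(t-s)}=-\frac{t}{2s(t-s)}+\frac{1}{2(t-s)}=-\frac{1}{2s}$ and the cross terms match, confirming $f_{t,s}(x)-\frac{1}{2(t-s)}\|x\|^2=H\left(\frac{x}{s\alpha}\right)$ with $H=-e_{1/\alpha}(-c)$; since $-c$ is convex, $e_{1/\alpha}(-c)$ is convex and $C^{1,1}$ with $\alpha$-Lipschitz gradient, so $H$ is concave and $C^{1,1}$, which both yields the semiconcavity bound and supplies the $C^{1,1}$ regularity needed to convert the two-sided bound $-\frac{1}{s}I\preceq\nabla^2f_{t,s}\preceq\frac{1}{t-s}I$ (a.e.) into the Lipschitz estimate with constant $\max\left\{\frac{1}{s},\frac{1}{t-s}\right\}$ — so there is no circularity in your appeal to the semiconvex-plus-semiconcave fact. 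Note that the two-sided Hessian argument is essential here: a naive triangle-inequality bound on $\nabla\left(H\left(\frac{x}{s\alpha}\right)\right)$ would only give the worse constant $\frac{1}{t-s}+\frac{t}{s(t-s)}$. Your closing prox-boundedness caveat is also apt, and in the paper's setting it is exactly Assumption A6 (coercivity, hence effective compactness of $\Kc$) that makes it harmless. What your route buys is transparency — one sees precisely where the constants $\frac{1}{s}$ and $\frac{1}{t-s}$ and the requirement $0<s<t$ come from; what the paper's citation buys is brevity and the full generality of \cite{3} (Hilbert spaces, no smoothness of $f$), which your Hessian-based phrasing reaches only via the standard a.e.-second-differentiability justification you implicitly invoke.
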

\begin{fact}\label{f10}
$f_{t,s}\ra f$ pointwise as $s,t\ra0$. If further we have $f$ being uniformly continuous, then $f_{t,s}\ra f$ uniformly as $s,t\ra0$. Furthermore, $\gd f_{t,s}\ra\gd f$ pointwise as $s,t\ra0$.
\end{fact}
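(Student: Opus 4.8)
The plan is to view the double envelope as the composition of an ordinary (lower) Moreau envelope followed by a ``reverse'' (upper) envelope, and to pin down $f_{t,s}$ by squeezing it between these two single envelopes, each of which is classically known to converge to $f$. First I would write the lower envelope $e_t f(x):=\inf_y\{f(y)+\frac{1}{2t}\|x-y\|^2\}$ and the upper envelope $e^s f(x):=\sup_z\{f(z)-\frac{1}{2s}\|x-z\|^2\}$, so that by Definition \ref{d8}, $f_{t,s}(x)=\sup_z\{e_t f(z)-\frac{1}{2s}\|x-z\|^2\}$. Taking $z=x$ in the outer supremum gives $f_{t,s}(x)\ge e_t f(x)$, while the elementary bound $e_t f\le f$ (take $y=z$ in the inner infimum) gives $f_{t,s}(x)\le\sup_z\{f(z)-\frac{1}{2s}\|x-z\|^2\}=e^s f(x)$. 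Together with $e_t f(x)\le f(x)\le e^s f(x)$, this establishes the two-sided sandwich $e_t f(x)\le f_{t,s}(x)\le e^s f(x)$, which is the engine of the whole argument.

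For pointwise convergence I would show $e_t f(x)\uparrow f(x)$ and $e^s f(x)\downarrow f(x)$ as $s,t\ra0$ and then invoke the squeeze. For the lower envelope, a near-minimizer $y_t$ satisfies $e_t f(x)=f(y_t)+\frac{1}{2t}\|x-y_t\|^2\le f(x)$; boundedness below forces $\|x-y_t\|\ra0$, and lower semicontinuity yields $\liminf e_t f(x)\ge f(x)$, matching $e_t f(x)\le f(x)$. The symmetric argument with upper semicontinuity handles $e^s f$, and continuity of $f$ (which holds for our extended objective) gives both limits, so $f_{t,s}(x)\ra f(x)$. For the uniform statement, assuming $f$ uniformly continuous with modulus $\om$, I would bound $0\le f(x)-e_t f(x)=\sup_y\{f(x)-f(y)-\frac{1}{2t}\|x-y\|^2\}\le\sup_{r\ge0}\{\om(r)-\frac{r^2}{2t}\}$, a quantity \emph{independent of} $x$ that tends to $0$ as $t\ra0$ (the quadratic penalty eventually dominates $\om$, which for a uniformly continuous function grows at most linearly). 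The identical estimate for $e^s f-f$ gives uniform convergence of both envelopes, hence of $f_{t,s}$ by the sandwich.

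For the gradient statement I would use the first-order (envelope-theorem) characterization of the optimizers. Because $s<t$, the outer problem is strongly concave in $z$ and the inner problem strongly convex in $y$ near the solution, so the maximizer $z^*(x)$ and minimizer $y^*(x)=\mathrm{prox}_{tf}(z^*)$ are unique and vary smoothly; Danskin's theorem then gives $\gd f_{t,s}(x)=\frac{1}{s}(z^*-x)$. Chaining the stationarity conditions $\gd e_t f(z^*)=\frac{1}{s}(z^*-x)$ and $\gd f(y^*)=\frac{1}{t}(z^*-y^*)=\gd e_t f(z^*)$ collapses everything to the clean identity $\gd f_{t,s}(x)=\gd f(y^*)$. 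It then remains to show $y^*\ra x$: since $\|z^*-x\|=s\|\gd f(y^*)\|$ and $\|z^*-y^*\|=t\|\gd f(y^*)\|$, we have $\|y^*-x\|\le(s+t)\|\gd f(y^*)\|$, so once the gradients are shown to stay bounded near $x$ both optimizers converge to $x$. Continuity of $\gd f$ at $x$ then yields $\gd f_{t,s}(x)=\gd f(y^*)\ra\gd f(x)$, where $f$ is differentiable.

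The main obstacle is this last gradient argument, specifically the bootstrap confining $y^*,z^*$ to a shrinking neighborhood of $x$ in the \emph{nonconvex} setting: without convexity I cannot take global uniqueness or stability of the proximal maps for granted, so I would localize — use the function-value convergence from the sandwich to show the optimal value is close to $f(x)$, and then exploit the strong convexity/concavity contributed by the quadratics (valid precisely because $s<t$) to trap and eventually contract the optimizers toward $x$, uniformly bounding $\|\gd f(y^*)\|$ along the way. Everything else (the sandwich, the two envelope limits, and the uniform modulus estimate) is routine once this localization is secured, and the $C^{1,1}$ regularity needed even to speak of $\gd f_{t,s}$ is already supplied by Fact \ref{f9}.
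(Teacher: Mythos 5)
Your sandwich argument is sound, and it is worth noting at the outset that the paper itself never proves Fact~\ref{f10} --- like Fact~\ref{f9} it is imported from the Lasry--Lions literature \cite{3}, \cite{8}, \cite{9} --- so your attempt has to be judged on its own. The inequality $e_t f\le f_{t,s}\le e^s f$, the monotone convergence of the two single envelopes, and the $x$-independent modulus bound $\sup_{r\ge0}\{\om(r)-\tfrac{r^2}{2t}\}$ correctly deliver the pointwise and uniform statements. The genuine gap is in the gradient part: the identity $\gd f_{t,s}(x)=\gd f(y^*)$ is obtained by chaining two stationarity conditions through $\gd e_t f(z^*)$, which tacitly assumes that the Moreau envelope $e_t f$ is differentiable at the outer maximizer $z^*$, equivalently that the inner prox at $z^*$ is essentially unique. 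Your asymmetry here matters: the outer problem \emph{is} strongly concave when $s<t$ (because $e_tf-\tfrac{1}{2t}\|\cdot\|^2$ is concave), but nothing makes the inner problem convex, and in the precise regime for which the paper needs this fact --- $\gd f$ continuous but \emph{not} Lipschitz, cf.\ the footnote following Theorem~\ref{t5} --- the inner prox can be non-unique for \emph{every} $t>0$, not merely for large $t$. Concretely, take $f(y)=-|y|^{3/2}$ near the origin (truncated smoothly far away so all envelopes are finite) and $x=0$. Then $f\in C^1$ with non-Lipschitz gradient at $0$, $\mathrm{prox}_{tf}(0)=\{\pm\tfrac94t^2\}$ for every $t>0$, so $e_tf$ has a kink at $0$, and by symmetry the unique outer maximizer is $z^*=0$, sitting exactly on that kink. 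Danskin gives $\gd f_{t,s}(0)=\tfrac1s(z^*-0)=0$, whereas $\gd f(y^*)=\mp\tfrac94t\neq0$: your identity fails for all $s<t$, even though the conclusion $\gd f_{t,s}(0)\ra\gd f(0)=0$ remains true. For the same reason your proposed repair --- localize and invoke ``strong convexity of the inner problem'' --- cannot close the gap: no neighborhood of $x$ and no smallness of $t$ convexifies $\inf_y\{f(y)+\tfrac{1}{2t}\|z-y\|^2\}$ when the curvature of $f$ is unbounded below near $x$, which is exactly what non-Lipschitz $\gd f$ permits (e.g.\ $x\log(a+b/x)$ in the paper's own footnote).

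The fix is to replace the exact identity by an inclusion. Since $e_tf-\tfrac{1}{2t}\|\cdot\|^2$ is concave, the superdifferential of $e_tf$ at $z^*$ equals the convex hull of limits of gradients of $e_tf$ at nearby points of differentiability; at such points $\gd e_tf(z)=\tfrac1t(z-y(z))=\gd f(y(z))$ for the (there unique) prox point $y(z)$, and the prox map has closed graph, so $\partial^+ e_tf(z^*)\subseteq\mathrm{co}\{\gd f(y):y\in\mathrm{prox}_{tf}(z^*)\}$. Outer optimality places $\tfrac1s(z^*-x)$ in this superdifferential, hence $\gd f_{t,s}(x)\in\mathrm{co}\{\gd f(y):y\in\mathrm{prox}_{tf}(z^*)\}$. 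Your localization bounds then survive without any uniqueness assumption and without bootstrapping through $\|\gd f(y^*)\|$: comparing $z^*$ with $z=x$ in the outer supremum gives $\|z^*-x\|\le\sqrt{2s\,(\sup f-\inf f)}$, and the prox inequality gives $\|y-z^*\|\le\sqrt{2t\,(\sup f-\inf f)}$ for every $y\in\mathrm{prox}_{tf}(z^*)$ (both oscillations finite in the paper's setting, where attention is restricted to a bounded set by A6). Thus every prox point converges to $x$, continuity of $\gd f$ (A2) forces the whole convex hull to collapse to $\{\gd f(x)\}$, and $\gd f_{t,s}(x)\ra\gd f(x)$ follows. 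In the counterexample this reads $0\in\mathrm{co}\{-\tfrac94t,+\tfrac94t\}$, as it must; the convex-hull relaxation, not the pointwise identity, is what the nonconvex non-Lipschitz setting actually supports.
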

\begin{proof}
See \cite{Aze_DoubleEnv_1993}.
\end{proof}
Now it is clear that if we define
\beq\label{10}
f^*_{i,n}(x)=\sup_z\inf_y\Big\{f_i(y)+\frac{L_{i,n}}{4}\|z-y\|^2-\frac{L_{i,n}}{2}\|x-z\|^2\Big\},
\eeq
then we have $\gd f^*_{i,n}$ being globally Lipschitz continuous with constant $L_{i,n}$. Since $U$ is coercive (Assumption A3), we can restrict our attention to some compact set in $\Kc$, where $f_i$ is uniformly continuous. Then over this set, we will have $\lim_{n\ra\infty}f^*_{i,n}\ra f_i$ uniformly as well.

\subsection{An Example of Sequences Satisfying the Conditions Using $p$-series}\label{sec:dis-5}
Examples of the tuples $(\al[n],\ep[n],\Lmax_n,L^{\min}_n,\taumin_n)$ satisfying the conditions of \cref{t7} exist with all schedules in $p$-series form. Assume $\al[n]=\al_0n^{-\be}$, $\ep[n]=\ep_0n^{-\ga}$, $\Lmax_n=L^{\min}_n=L_0n^{\lm}$, and $\taumin_n=\tau_0n^{-\dl}$ for some positive constants $\al_0$, $\ep_0$, $L_0$, and $\tau_0$. Then the constraints on the parameters are
\begin{equation*}
\left\{\begin{array}{lll}
\lim_{n\ra\infty}\al[n]\frac{(\Lmax_n)^5}{(\taumin_n)^3}=0&\Ra&\be-5\lm-3\dl>0,\\
\sum_{n=0}^\infty(\Lmax_n)^3\left(\frac{\al[n]}{\taumin_n}\right)^2<\infty&\Ra&2\be-3\lm-2\dl>1,\\
\sum_{n=0}^\infty\taumin_n\al[n]=\infty&\Ra&\be+\dl\leq1,\\
\lim_{n\ra\infty}L^{\min}_n=\infty&\Ra&\lm>0,\\
\sum_{n=0}^\infty\al[n]\Lmax_n\ep[n]<\infty&\Ra&\be+\ga-\lm>1.
\end{array}\right.
\end{equation*}
%\lim_{n\ra\infty}|L_{i,n+1}-L_{i,n}|=0\es\forall\es i&\Ra&\lm<1,\\
A possible tuple $(\be,\ga,\lm,\dl)$ satisfying the above equations is $(0.9,0.2,0.05,0.1)$. If the $\gd f_i$'s are Lipschitz continuous and then constant $\tau_i>0$ for all $i$, then $\lm=\dl=0$ and the above requirements degenerate to $0.5<\be\leq1$ and $\ga>1-\be$ as in \cite{Scutari_NEXT_2016}.

\subsection{Examples of Objectives with Unbounded Gradients}\label{sec:dis-6}
When it comes to non-Lipschitz gradients, a first example might be functions with unbounded gradients, but this need not be the only case; for example, $x\sqrt{x}$ on $[0,1]$. Its derivative is $\frac{3\sqrt{x}}{2}$, which is obviously bounded on $[0,1]$ but actually not Lipschitz continuous on $[0,1]$. Though convergence is not established for this case in \cite{Scutari_NEXT_2016}, \emph{NEXT} still works well numerically in these kind of simple examples. Next we turn to more challenging examples with unbounded gradients.

\begin{ex}[Interior local optimum]\label{ex5-3}
Consider $\Nc=\{1,2,3\}$ in a triangle network, with $f_1(x)=x^2-(\ln2)x$, $f_2(x)=x\ln\left(\frac{8}{x^2}\right)$, and $f_3(x)=-x\ln\left(\frac{4}{x^2}\right)$, where $x\in\Kc=[-1,2]$. There is no $G$. The unique stationary solution is $x^*=0$ as $F(x)=x^2$. The derivatives for the node objectives are $f_1'(x)=2x-(\ln2)$, $f_2'(x)=\ln\left(\frac{8}{x^2}\right)-2$, and $f_3'(x)=-\ln\left(\frac{4}{x^2}\right)+2$. Obviously the derivatives are unbounded at $x=0$ for $f_2'(x)$ and $f_3'(x)$, and this example is thus not covered in the theory developed in \cite{Scutari_NEXT_2016}.

On the other hand, we can simply choose the following approximation functions: $f^*_{2,n}(x)=x\ln\left(\frac{8}{x^2+1/p(n)}\right)$ and $f^*_{3,n}(x)=-x\ln\left(\frac{4}{x^2+1/p(n)}\right)$ with derivatives being ${f^*_{2,n}}'(x)=\ln\left(\frac{8}{x^2+1/p(n)}\right)-\frac{2x^2}{x^2+1/p(n)}$ and ${f^*_{3,n}}'(x)=-\ln\left(\frac{4}{x^2+1/p(n)}\right)+\frac{2x^2}{x^2+1/p(n)}$. We can choose $f^*_{1,n}(x)=f_1(x)$ throughout. One may check that $L_{1,n}=O(1)$ and $L_{2,n}=L_{3,n}=O(p(n)^{1/2})$. The conditions in \cref{t7} (b) are checked in the following.
\begin{itemize}[leftmargin=*]
\item
$(\al[n],\Lmax_n,\taumin_n)$ series conditions: suppose we choose $\al[n]=\al_0n^{-0.7}$ and $\taumin_n=\tau>0$, since $\Lmax_n=O(p(n)^{1/2})$, if we further choose $p(n)=n^{0.2}$ then it is evident that all conditions are satisfied.
\item
$\sum_{n=0}^\infty\zeta_n<\infty$: we actually have $F^*_n(x)=F(x)$ for all $x$ and hence $\zeta_n=0\es\forall\es n$.
\item
$\lim_{n\ra\infty}\frac{\etamax_n}{\taumin_n}=0$: the maximum differences of derivatives for node 2 and 3 always occur at $x=0$, and $\gd f^*_{2,n}(0)=\gd f^*_{3,n}(0)=O(\log p(n))=O(\log n)$ for the choice $p(n)=n^{0.2}$. The condition holds because $\log n-\log(n-1)=O(1/n)\ra0$.
\item
$\sum_{n=0}^\infty\frac{\al[n]\Lmax_n\etamax_n}{\taumin_n}<\infty$: notice that for our choices of $\al[n]=\al_0n^{-0.7}$ and $p(n)=n^{0.2}$ the summed term equals $O(n^{-0.7+0.2/2-1})$ and thus is summable.
\item
$\lim_{n\ra\infty}\gd F^*_n\ra\gd F$: notice $\gd F^*_n(x)=\gd F(x)$ for all $x$.
\end{itemize}
As we have $\gd F$ finite in $\Kc$, by \cref{t7} it is guaranteed that our algorithm converges to the unique stationary solution $x=0$, which is also a global minimum in this example. Note that this minimum lies in the interior of $\Kc$.
\end{ex}

\begin{ex}[Boundary local optimum]\label{ex5-4}
Consider a one node network, and the objective function is $f(x,y)=\sqrt{1-(x^2+y^2)}+\frac{x}{2}$ on the region inside the unit circle $\Kc=\{(x,y):x^2+y^2\leq1\}$, so that the graph of $f(x,y)$, namely $(x,y,f(x,y))$, is the upper half of the unit sphere lifted in the direction of positive $x$-axis. For the upper half of the unit sphere, the set of global minima is the unit circle; now that we tilt the sphere, the unique global minimum is $(-1,0)$. There is no stationary solution inside the unit circle. Since the gradient
\[
\gd f(x,y)=\begin{bmatrix}\frac{1}{2}-\frac{x}{\sqrt{1-(x^2+y^2)}}\\-\frac{y}{\sqrt{1-(x^2+y^2)}}\end{bmatrix}
\]
is unbounded on the unit circle, the definition of stationary solution fails there.
This example again clearly lies outside the theory of \cite{Scutari_NEXT_2016}.

We consider the approximation function $f^*_n(x,y)=\sqrt{1-(x^2+y^2)+1/p(n)}+\frac{x}{2}$. Its gradient can also be obtained by changing the $1$'s in the denominators of the original gradient to $1+1/p(n)$. We have $L_n=O(n^{3/2})$.
\begin{itemize}[leftmargin=*]
\item
$(\al[n],\Lmax_n,\taumin_n)$ series conditions: choose $\al[n]=\al_0n^{-0.8}$ and $\taumin_n=\tau>0$, and $p(n)=n^{0.1}$.
\item
$\sum_{n=0}^\infty\zeta_n<\infty$: notice that $F^*_n\downarrow F$ monotonically. Since the largest decreasing of $F^*_n-F^*_{n-1}$ always happen on the unit circle, $\sum_{n=0}^\infty\zeta_n<\infty$ is simply $F^*_1-F$ evaluated on the unit circle, which is $\sqrt{2}$.
\item
$\lim_{n\ra\infty}\frac{\etamax_n}{\taumin_n}=0$: roughly $\|\gd f^*_n\|=O(\sqrt{p(n)})=O(n^{0.05})$ for the choice of $p(n)=n^{0.1}$. The condition is true because $n^{0.05}-(n-1)^{0.05}=O(n^{-0.95})\ra0$.
\item
$\sum_{n=0}^\infty\frac{\al[n]\Lmax_n\etamax_n}{\taumin_n}<\infty$: the term is decaying in the rate of $O(n^{-0.8+0.15-0.95})$, which is summable.
\item
$\lim_{n\ra\infty}\gd F^*_n\ra\gd F$: this one is obvious as we only have one node.
\end{itemize}
Our method will not converge to any point in $int(\Kc)$; otherwise, by part (b) of \cref{t7} we know it must be a stationary solution, but there is no stationary solution inside the unit circle. Thus, our method will converge to some point in $bd(\Kc)$; since $\gd F$ is infinite on the unit circle, this falls into the case of \cref{t7} (c), and the point is not guaranteed to be a local minimum. Even so, we find in \cref{sec:sim-1} that for a wide range of initialization of $x$, our algorithm can converge to $(-1,0)$ while \emph{NEXT} does not. Unfortunately, since the objective is symmetric with respect to $x$-axis and the unique global maximum is $(\frac{1}{\sqrt{5}},0)$, if we start with any point to the right of the maximum on the $x$-axis, the process will inevitably approach $(1,0)$ in the limit.
\end{ex}

We will see in \cref{sec:sim-1} that \emph{NEXT} fails numerically in the above examples while our method works much better.

%%%%%%%%%%%%%%%%%%%%%%%%%%%%%%%%%%%%%%%%%%%%%%%%%%
%%%%%  Applications to Resource Allocation   %%%%%
%%%%%%%%%%%%%%%%%%%%%%%%%%%%%%%%%%%%%%%%%%%%%%%%%%

\section{Application to Resource Allocation}\label{sec:app}
We now describe how to apply our algorithmic framework to wireless resource allocation, and along way also describe how the two issues that motivated our generalizations arise.

\subsection{Problem Formulation}\label{sec:app-1}
We consider an OFDMA wireless cellular network, where a set of base stations (BSs) $B$ transmit downlink data to users through the set of channels or resource blocks (RBs) $K$. For a BS $b\in B$, $I_b$ denotes the set of users associated with it, which is an input that's fixed. The transmitted power of BS $b$ in channel $k$ is denoted by $p_{bk}$, and the maximum total sum power transmitted by BS $b$ is limited to $P_b$. The allocation variable of BS $b$ to user $i$ in channel $k$ is denoted by $x_{bik}$, with gain $g_{bik}$: $x_{bik}=1$ means $b$ transmits to $i$ in the RB $k$, and $x_{bik}=0$ otherwise. For all users, we also introduce a scheduling weight, $w_i$ for user $i$. Finally, $\sg^2$ is the variance of the independent zero mean additive white Gaussian noise (AWGN) for all BSs. We assume that BS $b$ only possesses the information of $\{g_{b'ik}:b'\in B,i\in I_b,k\in K\}$. In other words, BS $b$ can only compute the weighted-sum rate of the users associated with itself (knowing the powers of the other base-sites). This is a reasonable assumption, as each user equipment (UE) reports its measured channel gains to the BS it is associated with, whereas all the channel gains of UEs served by other BSs is unknown.

When the BS $b$ transmits a non-zero power in channel $k$, it interferes with all the other transmissions in channel $k$. However, owing to propagation-based loss, the powers of nearby BSs will dominate the whole interference term. Hence, with the definition that $N(b)$ is the neighboring BSs of BS $b$, we can neglect all the interference from $b'\not\in N(b)$ to $b$. This is a modeling assumption that is reasonably accurate in practice, and will be in force henceforth. For ease of exposition we assume that neighbor relation is mutual, i.e. $b'\in N(b)$ if and only if $b\in N(b')$. In terms of the interference graph, where nodes are BSs and edges only exist between BSs that interfere with each other, we reduce a complete graph to an undirected and connected one. Our work can be trivially extended to the directed case assuming that strong connectivity holds.

We consider a one-shot weighted sum-rate maximization problem, subject to the allocation limit constraint, the power limit constraint, non-negative power constraint, and the fact that $x_{bik}$ is either $0$ or $1$; we will justify the weighted sum-rate maximization problem in the next section. To make the overall constraint set convex, we relax the integer constraints on $x_{bik}$ as in \cite{VJ_DLSchedule_2009,VJ_ResourceAlloc_2009}. In future work we will study appropriate integer rounding schemes.

The joint power control and scheduling problem (P1) is then formalized as:
\bal\label{1}
\text{(P1)}\qquad\max_{p_{BK},x_{BI(B)K}}&\sum_{b\in B}\sum_{i\in I_b}w_i\sum_{k\in K}x_{bik}\log\left(1+\frac{\Ga_{bik}}{\sg^2+\Gab_{bik}}\right)\\
\text{subject to }&\sum_{i\in I_b}x_{bik}\leq1\quad\forall\es b\in B,k\in K\\
&\sum_{k\in K}p_{bk}\leq P_b\quad\forall\es b\in B\\
&0\leq x_{bik}\leq 1\quad\forall\es b\in B,k\in K,i\in I_b\\
&0\leq p_{bk}\quad\forall\es b\in B,k\in K,
\eal
where $\Ga_{bik}=p_{bk}g_{bik}$ and $\Gab_{bik}=\sum_{b'\in N(b)}p_{b'k}g_{b'ik}$ are the signal and interference for user $i$ in channel $k$. We use $p_{BK}$ to refer to the collection of the variables $p_{bk}\es\forall\es b\in B,k\in K$; also, $x_{BI(B)K}$ can be viewed in a similar way, where $I(B)\triangleq\buU_{b\in B}I_b$. This is a shorthand for easy referencing.

As we will solve the problem in a distributed manner, we let each BS maintain the decision variables $p_{BK}$. Denote the copy of $p_{b'k}$ at BS $b$ by $p^b_{b'k}$ for all $b'\in B,k\in K$. The idea is to perform the optimization at each BS, and then enforce consensuses of the decision variables among all BSs, transforming (P1) into (P2) given in the following:
\begin{equation}\label{2}
\text{(P2)}\qquad\max_{p^B_{BK},x_{BI(B)K}}\sum_{b\in B}\sum_{i\in I_b}w_i\sum_{k\in K}x_{bik}\log\big(1+\frac{\Ga^b_{bik}}{\sg^2+\Gab^b_{bik}}\big)
\end{equation}
\begin{comment}
\bal\label{2}
\max_{p^B_{BK},x_{BI(B)K}}&\sum_{b\in B}\sum_{i\in I_b}w_i\sum_{k\in K}x_{bik}\log\left(1+\frac{\Ga^b_{bik}}{\sg^2+\Gab^b_{bik}}\right)\\
\text{subject to }&\sum_{i\in I_b}x_{bik}\leq1\quad\forall\es b\in B,k\in K\\
&\sum_{k\in K}p^b_{b'k}\leq P_b'\quad\forall\es b,b'\in B\\
&0\leq x_{bik}\leq 1\quad\forall\es b\in B,k\in K,i\in I_b\\
&0\leq p^b_{b'k}\quad\forall\es b,b'\in B,k\in K\\
&p^{b_1}_{b'k}=p^{b_2}_{b'k}\quad\forall\es b_1,b_2,b'\in B,k\in K,
\eal
\end{comment}
where $\Ga^b_{bik}=p^b_{bk}g_{bik}$ and $\Gab^b_{bik}=\sum_{b'\in N(b)}p^b_{b'k}g_{b'ik}$. The set of constraints includes all the constraints in \cref{1} now with $p^b_{b'k}$ and the second and fourth constraints hold for all copies at $b\in B$, and an additional constraint that the consensus is reached $p^{b_1}_{b'k}=p^{b_2}_{b'k}\quad\forall\es b_1,b_2,b'\in B,k\in K$.% At this stage we haven't stated the exact meaning of ``performing optimization at each BS," but this will become clear later in the paper.
%Note that for BS $b$ without the knowledge of channel gains of other BSs $b'\neq b$ (and also $w_i$ where $i\not\in I_b$), it doesn't really know the rates of other BSs. This issue will be addressed later in a more general way.

We can split the $\log(\cdot)$ term in the objective to two parts 
$x_{bik}\log(\sg^2+\Ga^b_{bik}+\Gab^b_{bik})$ and $-x_{bik}\log(\sg^2+\Gab^b_{bik})$, and then modify the former to be $x_{bik}\log(\sg^2+\frac{\Ga^b_{bik}+\Gab^b_{bik}}{x_{bik}})$ as in \cite{VJ_DLSchedule_2009,VJ_ResourceAlloc_2009}, which is jointly strictly concave in $x_{bik}$ and $p^b_{Bk}$. We define the modified function to be 0 when $x_{bik}=0$ so that it is continuous. Then (P2) becomes the following:
\beq\label{3}
\text{(P3)}\quad\max_{p^B_{BK},x_{BI(B)K}}
\sum_{b\in B}\sum_{i\in I_b}w_i\sum_{k\in K}\left[x_{bik}\log\left(\sg^2+\frac{\Ga^b_{bik}+\Gab^b_{bik}}{x_{bik}}\right)-x_{bik}\log\big(\sg^2+\Gab^b_{bik}\big)\right],
\eeq
subject to the same set of constraints as in \cref{2}. Note that (P2) and (P3) are the same if $x_{bik}$'s are restricted to be integers, that is, $x_{bik}\in\{0,1\}$.

We will now reiterate the two issues we identified earlier with existing distributed optimization approaches but in the specific context of \cref{3}. If we take the approach in \cite{Scutari_NEXT_2016}, then every BS would need to keep a copy of all the decision variables, both $p_{BK}$ and $x_{BI(B)K}$, and perform consensus on them and also any relevant gradient terms. This is simply impractical and forces the localization idea. We also note that \cref{3} contains functions of the form $x\log
\big(a+(p+p')/x\big)-x\log(a+p')$ that are smooth but where the gradients are not Lipschitz; in particular $x\log(a+(p+p')/x)$ has some terms of its gradient becoming infinite when $x\downarrow0$. These functions clearly fall outside the framework of \cite{Scutari_NEXT_2016}, and force approaches like our proximal approximations scheme. In the following, we apply the developed distributed optimization framework to the problem in \cref{1}-\cref{3}. In the problem, the set of BSs in the cellular network $B$ corresponds to $\Nc$ in the framework, a BS $b$ corresponds to a node $i$, and the set of edges $\Ec$ in the framework is the one-tier interference graph here. There are many different ways to apply our framework to the resource allocation, which we discuss in detail next.

\subsection{Direct Method}\label{sec:app-2}
In this method, we directly let $f_b$ be the weighted sum-rate of BS $b$: $f_b=-\sum_{i,k}w_ix_{bik}\log\left(1+\frac{\Ga_{bik}}{\sg^2+\Gab_{bik}}\right)$, and $G=0$. In the first version of this method, every BS $b$ keeps the powers of all BSs as decision variables, that is, we have $p^b_{b'k}$ for all $b,b'\in B$. But instead of keeping $x^b_{b'ik}$'s as decision variables at BS $b$ for all $b'\in B$ if we exactly follow \emph{NEXT}, we allow every BS $b$ only keeps its own $x^b_{bik}$, which we denote as $x_{bik}$ for short. As we see from \cref{sec:main-3}, this suffices because BS $b$ dictates the decision of $x_{bik}$.

If we reconsider the problem from the perspective of our localization framework \cref{sec:main-1}, there are $2|B|$ local dependency sets. There are $|B|$ local dependency sets $\{b\}$ for all $b\in B$ which correspond to the variables $x_{bI_bK}$, and $|B|$ local dependency sets $Nb(b)\triangleq N(b)\uU\{b\}$ for all $b\in B$ which correspond to the variables $p_{bK}$. In the first version of the direct method, referred to as the Localized X Globalized P-diRect Method (LXGP-RM) algorithm, only $x_{BI(B)K}$ follows the localization framework, $p_{BK}$ is still globalized in the sense that every BS keeps a copy of the whole variable.

We only present algorithm in words here, for the pseudo code see Appendix B. The algorithm basically proceeds as \cref{a6} with the common variable $p_{BK}$ and $|B|$ local variables $x_{bI_bK}\es\forall\es b$ (and without approximation functions since objectives have Lipschitz gradients). At BS $b$ we use $r^b_{b'k}$ to track $\frac{1}{|B|}\sum_{b''\in B}\frac{\pa f_{b''}}{\pa p_{b'k}}$ and $\pit^b_{b'k}$ to track $\sum_{b''\neq b\in B}\frac{\pa f_{b''}}{\pa p_{b'k}}$, which correspond to $\yB$ and $\piB$ in \cref{a6}. In each iteration, we let $\al[n]=\frac{\al_0}{(n+1)^\be}$, and BS $b$ performs the minimization of 
\beq\label{11-1}
\ft_b(p^b_{BK},x_{bI(b)K};\pb^b_{BK},\xb_{bI(b)K})+\pit^b_{BK}\cdot(p^b_{BK}-\pb^b_{BK})
\eeq
with respect to the variables $p^b_{BK}$ and $x_{bI(b)K}$, and $\pb^b_{BK}$, $\xb_{bI(b)K}$, and $\pit^b_{BK}$ being the current iterate of the variables. The surrogate function $\ft_b(\pB^b,\xB_b;\pBb^b,\xBb_b)$ is chosen as
\begingroup\makeatletter\def\f@size{8.1}\check@mathfonts
\def\maketag@@@#1{\hbox{\m@th\normalsize\normalfont#1}}%
\bal\label{11}
&f_b+\frac{\tau_b}{2}\Big[\sum_{i,k}(x_{bik}-\xb_{bik})^2+\sum_{b'\in B,k}(p^b_{b'k}-\pb^b_{b'k})^2\Big]\\
&+\sum_{i,k}\frac{\pa f_b}{\pa x_{bik}}\cdot(x_{bik}-\xb_{bik})+\sum_{b'\in Nb(b),k}\frac{\pa f_b}{\pa p^b_{b'k}}\cdot(p^b_{b'k}-\pb^b_{b'k}),
\eal
\endgroup
where $f_b$ and $\frac{\pa f_b}{\pa p^b_{b'k}}$ are functions of $(\pBb^b,\xBb_b)$, but $\frac{\pa f_b}{\pa x_{bik}}$ is just a function of $\pBb^b$. The quadratic term in \cref{11} is to maintain the strict convexity of the surrogate. Finally, we have a universal doubly stochastic matrix $W$ to average $p_{BK}$ and $r_{BK}$ for them to reach consensus. We remark that with the objective in \cref{11-1} only having up to quadratic terms and our constraints being linear, the minimization can be solved efficiently using quadratic programming (QP) with coefficient matrices of the quadratic term being positive-semidefinite \cite{Kozlov_QP_1980}.

The second version of the direct method, termed as the Localized X Localized P-diRect Method (LXLP-RM) algorithm, makes better use of our localization idea as opposed LXGP-RM so that for all $b\in B$ we now only maintain the variables $p^b_{b'k}$ for $b'\in Nb(b)$ in BS $b$, as the BSs in $Nb(b')$ dictate the decision of $p_{b'k}$; the variables $x_{BI(B)K}$ still follow the localization framework just as before in LXGP-RM.
The main change from LXGP-RM is that the index set of the variable tuple $B$ is now replaced by $Nb(b)$, as BS $b$ does not keep the variable $p^b_{b'k}$ for $b'\not\in Nb(b)$ any more. As a result, the steps regarding the weighted sum for reaching consensus need to be modified. We introduced the matrix $W(b)$ for each BS $b$. The matrix $W(b)$ concerns with the weighting of the local dependency set $Nb(b)$ regarding the variable $p_{bK}$, that is, $\Gc(b)=(Nb(b),\Ec(b))$ where $\Ec(b)=\{(i,j)\in\Ec:i,j\in Nb(b)\}$ if the whole network is $\Gc=(B,\Ec)$. Its $i$-th row $W_{i:}(b)$ and $j$-th column $W_{:j}(b)$ should be zero if and only if $i\not\in Nb(b)$ or $j\not\in Nb(b)$. After deleting all these zero rows and columns, it would become a doubly-stochastic matrix, as described in Assumption L4.

The second version of the direct method still follows the framework of \cref{a6}, with $2|B|$ local variables $p_{bK},x_{bI_bK}\es\forall\es b$ and no common variable. Now at BS $b$ we use $r^b_{b'k}$ to track $\tfrac{1}{|Nb(b')|}\sum_{b''\in Nb(b')}\tfrac{\pa f_{b''}}{\pa p_{b'k}}$, and $\pit^b_{b'k}$ to track $\sum_{b''\neq b\in Nb(b')}\tfrac{\pa f_{b''}}{\pa p_{b'k}}$, where $b'\in Nb(b)$. Also, the surrogate $\ft_b$ is minorly changed so that now the quadratic term of $p^b_{b'k}$ only sums over $b'\in Nb(b)$ instead of $b'\in B$.

We finally remark that one can also consider a GXGP-RM algorithm (basically \emph{NEXT} from \cite{Scutari_NEXT_2016}) where copies of both the power and allocation variables are maintained at each node, or even a GXLP-RM algorithm where localization is performed only for the power variables. Note that only the fully localized scheme, i.e. LXLP-RM, will be scalable in practice. However, we will evaluate its performance relative to the other schemes.
\begin{comment}
\bal
\ft_b(\pB^b,\xB_b;\pBb^b,\xBb_b)&=f_b(\pBb^b,\xBb_b)+\sum_{i,k}\frac{\pa f_b}{\pa x_{bik}}(\pBb^b)\cdot(x_{bik}-\xb_{bik})+\sum_{k}\frac{\pa f_b}{\pa p^b_{bk}}(\pBb^b,\xBb_b)\cdot(p^b_{bk}-\pb^b_{bk})\\&+\sum_{k}\sum_{b'\in N(b)}\frac{\pa f_b}{\pa p^b_{b'k}}\cdot(p^b_{b'k}-\pb^b_{b'k})+\frac{\tau_b}{2}\left[\sum_{i,k}(x_{bik}-\xb_{bik})^2+\sum_{b'\in Nb(b)}(p^b_{b'k}-\pb^b_{b'k})^2\right].
\eal
The partial derivatives remain the same.
\end{comment}

\subsection{Decomposed Method}\label{sec:app-3}
In (P3), there is a part of the objective that is concave (or convex after taking minus sign), and the optimization of this part should be easy. The algorithm might run faster if we properly exploit this fact. To achieve this goal, let us assume that the channel gains $g_{bik}$'s are known to all BSs. Then we could apply the framework in Section III by letting $f_b=\sum_{i,k}w_ix_{bik}\log(\sg^2+\Gab_{bik})$ and $G=-\sum_{b,i,k}w_ix_{bik}\log\big(\sg^2+\tfrac{\Ga_{bik}+\Gab_{bik}}{x_{bik}}\big)$.

As $G$ is in general a function of not only $p_{bk}$ but also $x_{bik}$ for all $b\in B$ and we need to optimize $G$ at every BS, this method does not allow any localization. In other words, the tuple consisting of all decision variables is the common variable $\xB^c$ in \cref{a6} itself, and there is no local dependency set besides $\Nc$ itself. At BS $b$ we need to maintain $p^b_{b'k}$ as well as $x^b_{b'ik}$ $\forall\es b'\in B$. Note that with the derivatives of $f_b$ being Lipschitz continuous and no localization, this method is a direct application of \cite{Scutari_NEXT_2016}.

The algorithm, which we call the Globalized X Globalized P-deComposed Method (GXGP-CM), is also largely the same as LXGP-RM, except that now we have to optimize $G$ as well, and we need to maintain and update $x^b_{b'ik}$. The algorithm and the surrogate function also need minor revisions (see Appendix B).

\subsection{Partially Linearized Method}\label{sec:app-4}
While the decomposed method enjoys the benefit of using the intrinsic convex part in the objective, it is impractical since it requires every BS knows all channel gains. We could instead put the convex part in $f_b$ as well, and take advantage of it by not linearizing it when forming the surrogate function. Specifically, we let $f_b=f_{b\uU}+f_{b\dU}$ where $f_{b\uU}=-\sum_{i,k}w_ix_{bik}\log\left(\sg^2+\frac{\Ga_{bik}+\Gab_{bik}}{x_{bik}}\right)$ and $f_{b\dU}=\sum_{i,k}w_ix_{bik}\log\left(\sg^2+\Gab_{bik}\right)$. Then we can choose the surrogate function $\ft_b(\pB^b,\xB_b;\pBb^b,\xBb_b)$ as $f_{b\uU}(\pB^b,\xB_b)+\ft_{b\dU}(\pB^b,\xB_b;\pBb^b,\xBb_b)$, where $\ft_{b\dU}$ has exactly the same form as in \cref{11} (for LXGP case), i.e., linearized with the current iterate $(\pBb^b,\xBb_b)$ plus the quadratic terms.

With $\gd f_b$ not being Lipschitz continuous, we have to apply the approximation functions detailed in \cref{sec:main-2} to guarantee convergence. We may choose $f^*_{b,n}=f^*_{b\uU,n}+f_{b\dU}$ where
\beq\label{11-2}
f^*_{b\uU,n}=-\sum_{i,k}w_i(x_{bik}+e[n])\log\Big(\sg^2+\frac{\Ga_{bik}+\Gab_{bik}}{x_{bik}+e[n]}\Big).
\eeq
One can easily show that $\gd f^*_{b,n}$ is Lipschitz continuous with constant $L_{b,n}$ the reciprocal of $e[n]$. We can then choose a schedule of $e[n]\ra0$ according to \cref{t7} and \cref{sec:dis-5}. We refer to this method as the Partially Linearized method (PL) algorithm, which could be LXLP or any of the other combinations. Note that we do not have the guarantee of convergence to stationary solution in this case because the objective function has unbounded gradient on the boundary.

\subsection{Consensus Scheme}\label{sec:app-5}
Let $\Gc=(B,\Ec)$ be the BS network we are considering, and let $d_i$ be the degree of BS $i$. The choice of $W$ must meet the following two criteria to conform to Assumption L4: (1) it must be doubly-stochastic; (2) $W_{ij}\geq0$ is non-zero if and only if $(i,j)\in\Ec$. We denote the set of $W$'s that satisfy these criteria as $\Om$, which is a subset in $\R_+^{|B|\times|B|}$.
We choose $W$ as follows
\beq\label{20}
W_{ij}=\left\{
\begin{array}{cl}0&\text{if }j\not\in N(i)\\\frac{1}{\bar{d}}&\text{if }j\in N(i)\text{ and }i\neq j\\\frac{\bar{d}-d_i}{\bar{d}}&\text{if }j\in N(i)\text{ and }i=j\end{array}
\right.,
\eeq
where $\bar{d}=\max_id_i+1$. It is easy to verify that this choice of $W$ is row-stochastic. Since $W$ is symmetric, it is then also column-stochastic. By definition of $\bar{d}$, we will have $W_{ii}>0$. By construction, $W_{ij}>0$ if $(i,j)\in\Ec,\es i\neq j$.

In LXLP-RM algorithm we need a $W(b)$ for every BS $b$. Let $\Ec(b)=\{(i,j)\in\Ec:i,j\in Nb(b)\}$. Then we choose $W(b)$ as described above but treat $\Gc$ as $\Gc(b)=(Nb(b),\Ec(b))$.

With symmetric weights $W=W^T$, \cite{Boyd_Average_2004} suggests that the best convergence speed is obtained with the solution of 
\beq\label{22-1}
\min\|W-\1_{|B|}\1^T_{|B|}/|B|\|_2\text{ subject to }W\in\Om.
\eeq
For symmetric graphs, the optimized result is $W_{ij}=\frac{1}{d_i}$ when $(i,j)\in\Ec$ and $W_{ij}=0$ otherwise, which is exactly our choice.% for the simulations.

%%%%%%%%%%%%%%%%%%%%%%%%%%%%%%%%%%%%%%%%%%%%%%%%%%
%%%%%           Simulation Results           %%%%%
%%%%%%%%%%%%%%%%%%%%%%%%%%%%%%%%%%%%%%%%%%%%%%%%%%

\section{Simulation Results}\label{sec:sim}
In \cref{sec:sim-1} we compare the performance of \cref{a6} with \emph{NEXT} of the examples with unbounded gradients on either interior point or boundary point described in \cref{sec:dis-6}. In \cref{sec:sim-2} we compare our algorithms with single-cell scheduling and resource allocation method.

\subsection{Approximation Functions}\label{sec:sim-1}
\cref{fig8} depicts how the local decision variables in \cref{ex5-3} change for \emph{NEXT} and \cref{a6} within $500$ iterations. We start with initial value of $(x_1[0],x_2[0],x_3[0])=(-1,-0.5,-0.25)$. We choose the following parameters: $\tau=0.1$ (independent of $i$ and $n$), $\al[n]=0.98n^{-0.7}$, $p(n)=10n^{0.2}$,
\[
\WB=\begin{bmatrix}1/2&1/4&1/4\\1/4&1/2&1/4\\1/4&1/4&1/2\end{bmatrix},
\]
and the surrogate functions are chosen to be direct linearization plus the quadratic regularization term (see \cref{11} for an example of such kind of choices). In \cref{fig8} (a) we can see that \emph{NEXT} oscillates and is not numerically stable for this example. Whenever the iterates go near the global minimum at $x=0$, they jump to values far away. This is because the gradients $\gd f_2$ and $\gd f_3$ are infinite at the point, even though $\gd F$ is zero there. The trackings of $(x_1,x_2,x_3)$ to $\xb$ and $(y_1,y_2,y_3)$ to $\yb$ are in the fast time-scale (see \cref{sec:sa}) and actually happen quite fast, as can be seen in the figure. However, a slight mismatch between $(x_1,x_2,x_3)$ and $(y_1,y_2,y_3)$ is sufficient to cause very large $\{\piBt_i[n]+\gd f_i[n]\}_i$, which is supposed to be very small when $\xb\approx0$, driving $x_i$'s to the boundary. If two of them jump to $2$ and one of them to $-1$, then in the next few iterations they jump up; if two jump to $-1$ and one to $2$, they jump down.

\setlength{\textfloatsep}{3pt}
\begin{figure}[t!]
\centering
\subfloat[\emph{NEXT}]{
\includegraphics[scale=0.33]{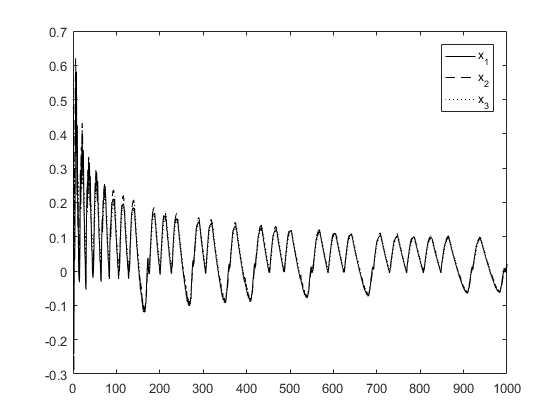}}
\label{fig8a}\hfill
\subfloat[\cref{a6}]{
\includegraphics[scale=0.33]{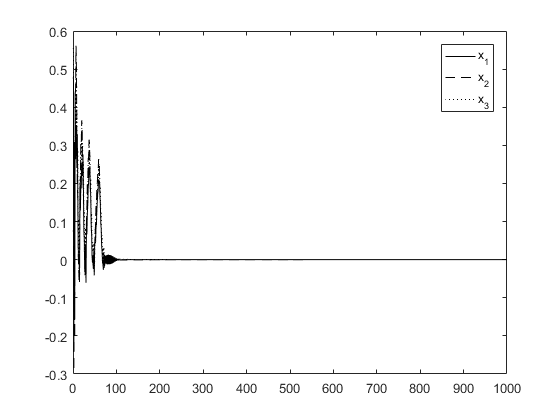}}
\label{fig8b}\hfill
\subfloat[\cref{a6} with wrong $f^*_{2,n}$]{
\includegraphics[scale=0.33]{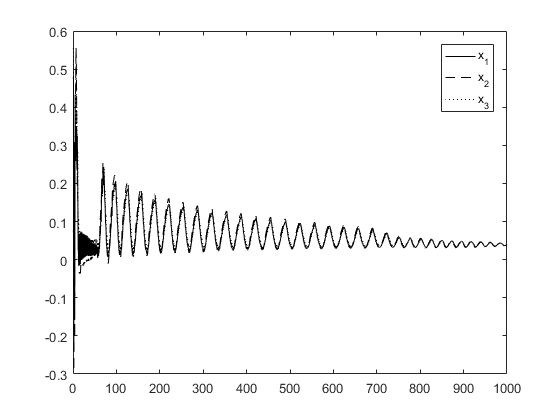}}
\label{fig8c}\hfill
\caption{Comparison between \emph{NEXT} and \cref{a6} for the local variables versus iterations in \cref{ex5-3}.}
\label{fig8}
\end{figure}

We cannot ensure that \emph{NEXT} is not converging when $n\ra\infty$ theoretically, but we observe it is still oscillating when $n$ is as large as $10^4$. In contrast, \cref{a6} essentially converges to the global minimum $x=0$ within $150$ iterations in \cref{fig8} (b). In \cref{fig8} (c), we show the case when $f^*_{2,n}(x)=x\ln\left(\frac{8}{x^2+1.1/p(n)}\right)$, where we have everything satisfied except $\lim_{n\ra\infty}\gd F^*_n\ra\gd F$ -- one can check there will be an additional $\ln(1/1.1)$ term. From the figure we see that not only it exhibits oscillating behavior, but it seems to converge to a wrong point other than $x=0$.

The converging behaviors of \emph{NEXT} and \cref{a6} for \cref{ex5-4} are shown in \cref{fig9} for two different initializations. The parameters are $\tau=0.05$, $\al[n]=0.98n^{-0.85}$, and $p(n)=10n^{0.1}$. The surrogate function is again direct linearization plus quadratic regularizer. The 2-D iterates for both algorithms are plotted from red to blue, with \emph{NEXT} being circle dots and \cref{a6} being square dots. In \cref{fig9} (a), we start with $(0.5,0.5)$, and both algorithms are executed for $5000$ iterations (the dots are down sampled though). We see that while our algorithm is converging to the global minimum at $\xB=(-1,0)$, \emph{NEXT} is ``converging" to some point $(-0.2499,0.9683)$ near the boundary. Due to the unbounded gradient near the boundary, the $\frac{1}{2}$ term in $\pa_xf(x,y)$ is completely dominated by the rest $(-x,-y)/\sqrt{1-(x^2+y^2)}$, which directs the iterate only to descend in the radial direction. Again we cannot ensure \emph{NEXT} does not converge to $(-1,0)$ if we run it forever; however, it does not visit the region $x<-0.25$ after $10^5$ iterations. By slowly changing the objective, we are able to escape this dominance and obtain the correct solution.

In (b) we start from $\xB=(0.6,0)$, which is to the right of the global maximum $(\frac{1}{\sqrt{5}},0)$, for $100$ iterations, and both algorithms are converging to $(1,0)$. Since we start on the $x$-axis and the gradients always direct to $(1,0)$, without any perturbation there is no way to escape $x$-axis for all gradient-descent-like methods. \cref{ex5-4} falls into the case (c) in \cref{t7}, where the algorithm is converging to some point in the boundary and $\gd F$ is not bounded. The definition of stationary solution does not apply, and \emph{NEXT} fails to converge to global minimum for both $\xB[0]=(0.5,0.5)$ and $\xB[0]=(0.6,0)$, while \cref{a6} succeeds for $\xB[0]=(0.5,0.5)$ but also fails for $\xB[0]=(0.6,0)$.

\setlength{\textfloatsep}{3pt}
\begin{figure}[t!]
\centering
\subfloat[$\xB{[0]}=(0.5,0.5)$]{
\includegraphics[scale=0.4]{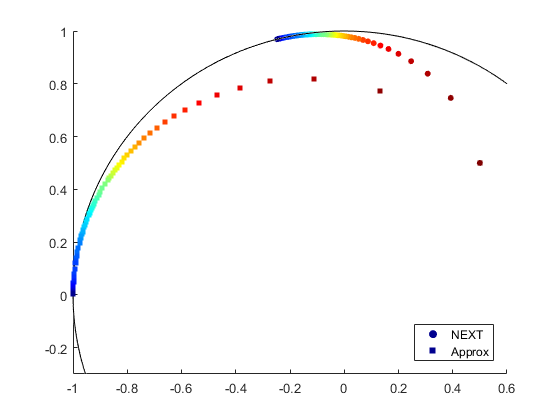}}
%The ranges in z \in [..] contains square brackets which confuses LaTeX when reading the optional argument of \subfloat. So, you need to conceal it by using {[...]}
\label{fig9a}\hspace{20pt}
\subfloat[$\xB{[0]}=(0.6,0)$]{
\includegraphics[scale=0.4]{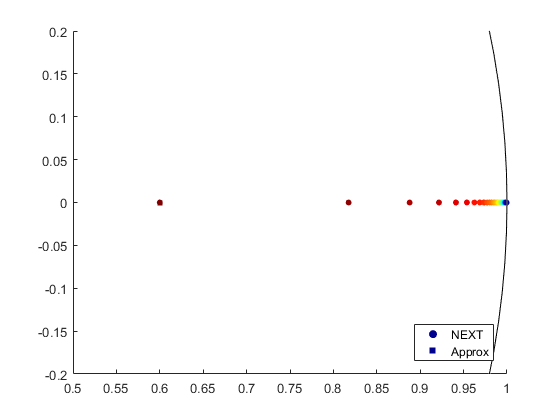}}
\label{fig9b}
\caption{Comparison between \emph{NEXT} and \cref{a6} for the evolution of the local variable in \cref{ex5-4}.}
\label{fig9}
\end{figure}

\subsection{The Resource Allocation Application}\label{sec:sim-2}
We adopt the framework of the network utility maximization problem as in \cite{VJ_DLSchedule_2009,VJ_ResourceAlloc_2009} where we maximize
\beq\label{22}
U(\RB_T)=\sum_{i\in I(B)}U_i(R_{i,T}),
\eeq
where $U_i(\cdot)$ is given by
\beq\label{23}
U_i(\RB_{i,t})=
\left\{
\begin{array}{cl}
%\frac{c_i}{\al}(W_{i,t})^\al,&\al\leq1,\al\neq0,\\
%c_i\log(W_{i,t}),&\al=0,
\frac{c_i}{\eta}(R_{i,t})^\eta,&\eta\leq1,\eta\neq0,\\
c_i\log(R_{i,t}),&\eta=0,
\end{array}
\right.
\eeq
$R_{i,t}$ is the average throughput of user $i$ up to time $t$, $\eta\leq 1$ is the fairness parameter, and $c_i$ is a QoS weight. The gradient-based scheduling approach \cite{VJ_Approx_2009} leads to solving the optimization problem given below at each time instance
\beq\label{24}
%\max_{\rB_t\in\Rc(e_t)}\sum_ic_i(W_{i,t})^{\al-1}r_{i,t}.
\max_{\rB_t\in\Rc(e_t)}\sum_ic_i(R_{i,t})^{\eta-1}r_{i,t}.
\eeq
This is exactly the one-shot optimization problem we consider in \cref{1}, where $w_i=c_i(R_{i,t})^{\eta-1}$ is the weight of user $i$, $r_{i,t}$ is the rate of user $i$ given by the Shannon capacity, and $\Rc(e_t)$ is the capacity region dependent on current channel state $e_t$ and constrains the choice of $r_{i,t}$ as the constraints set in \cref{1}.

Now consider problem \cref{1}. A naive solution would be disregarding the interference and solving the resource allocation and scheduling for each cell separately. The optimization for a single-cell is well solved in literature, e.g. in \cite{VJ_DLSchedule_2009}. Specifically, neglecting the interference, for a BS $b$ we can solve
\beq\label{25}
\max_{p_{bK},x_{bI_bK}}\sum_{i\in I_b}w_i\sum_{k\in K}x_{bik}\log\left(1+\frac{p_{bk}g_{bik}}{\sg^2x_{bik}}\right),
\eeq
subject to the constraints. This is a convex problem, and can be solved with existing methods in convex optimization. We call this method the Single-Cell No-Iteration (SC-NI) algorithm.

A refinement of the SC-NI algorithm is to update the interference terms after first optimization for each cell. We then optimize for each cell again while treating the powers of neighboring BSs as constants, and then repeat until convergence. We call this the Single-Cell (SC) algorithm.

\begin{comment}
\begin{table}[!t]
\centering
\caption{Simulation Parameters.}
\label{tab1}
\begin{tabular}{|l|l|}
\hline
\textit{\textbf{Parameter}} & \textit{\textbf{Value}} \\ \hline
$|B|$   & 4   \\\hline
$|K|$   & 3   \\\hline
$\sg^2$ & 0.01\\\hline
$\tau_b$& 0.001$\es\forall\es b$\\\hline
$g_{bik},i\in I_b$ (channel gain, signal link) & Rayleigh$(1)$\\\hline
$g_{b'ik},i\in I_b,b'\in N(b)$ (channel gain, interference link) & Rayleigh$(0.5)$\\\hline
$P_b$   & 10$\es\forall\es b$\\\hline
$c_i$   & 1$\es\forall\es i$\\\hline
$\al_0$ & 0.99\\\hline
$\be$   & 0.53\\\hline
\end{tabular}
\end{table}
\end{comment}

We adopt the 19 cell wrap-around model from \cite{Huo_Wrap_2005} as the network scenario used in our simulations. Furthermore, each UE associates with exactly one BS and each BS has five UEs associated with it. Suppose a UE is served by a BS. Then there is a signal link between the UE and the BS, while all the neighbors of the BS cause interferences to the UE. The time horizon $T$ is chosen to be $20$. The channel gains are directly generated by Rayleigh distribution, with parameter $1$ for associated BS-UE pair, and $0.5$ for interference, instead of choosing random locations for the UEs and calculating the path loss. The channel gains are assumed to be independent among all links in a scheduling instance and also across all scheduling instances. We use identical QoS weights ($c_i=1$). Other parameters include: $|K|=3$, $\sg^2=0.01$, $\al_0=0.99$, $\be=0.53$, and $P_b=10\es\forall\es b$. For simplicity we treat all scheduling terms $x_{bik}$ as real numbers and use the local optimal results  to compute utilities. In future work we will include integer rounding procedures in the simulations. The entire process is simulated only one time, as multiple time slots already brought in the averaging effect.

\begin{figure}%[htbp]
\centering
\subfloat[$\eta=1$]{
\includegraphics[scale=0.45]{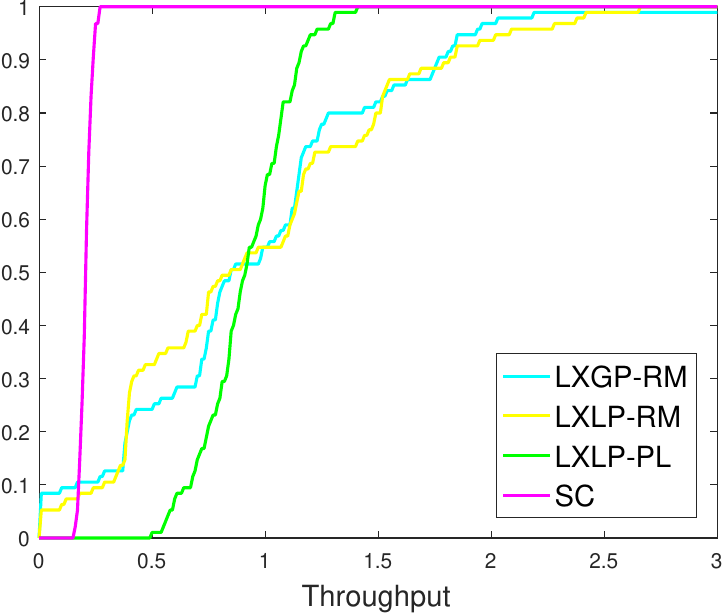}}
\label{fig2a}\hspace{20pt}
\subfloat[$\eta=0.5$]{
\includegraphics[scale=0.45]{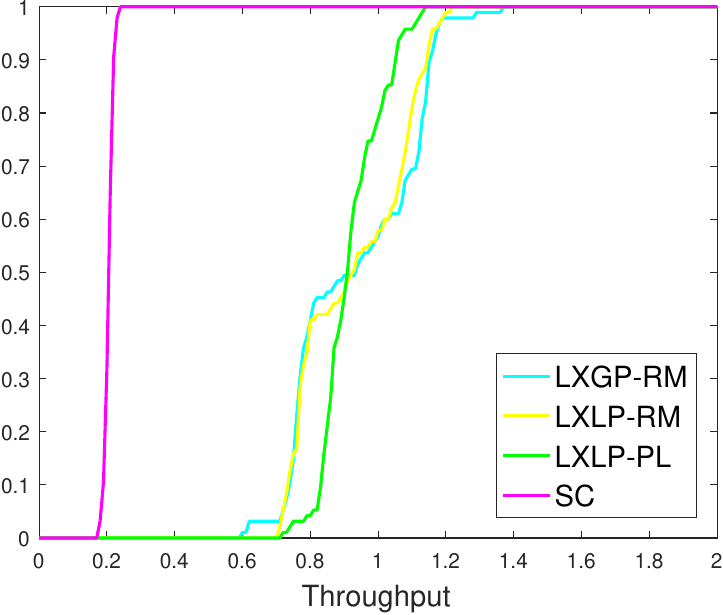}}
\label{fig2b}
\caption{Empirical CDFs of users' throughputs for $\eta=1$ and $\eta=0.5$.}
\label{fig2}
\end{figure}

\begin{figure}[t!]%[htbp]
\centering
\subfloat[Distribution of transmission power.]{
\includegraphics[scale=0.45]{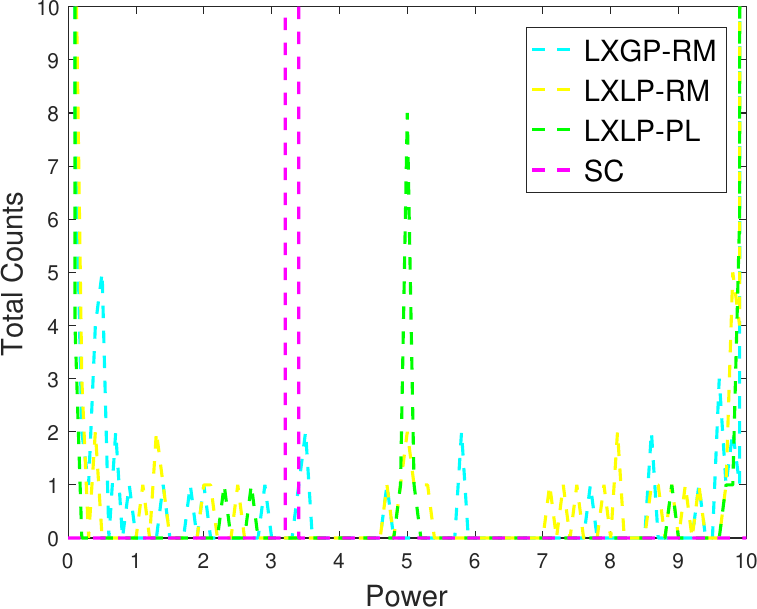}}
\label{fig3a}\hspace{20pt}
\subfloat[Distribution of SINR]{
\includegraphics[scale=0.45]{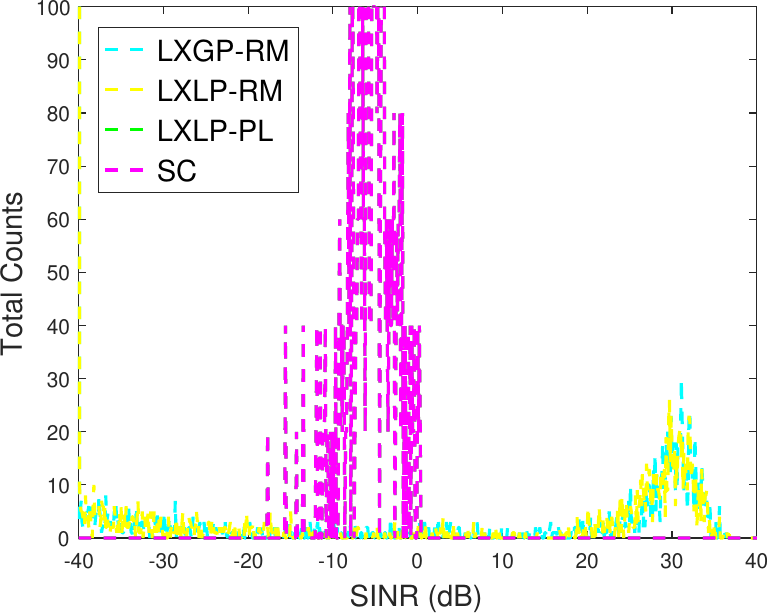}}
\label{fig3b}
\caption{Distributions of transmission power and SINR for $\eta=1$.}
\label{fig3}
\end{figure}

\cref{fig2} depicts the CDFs of user throughput of algorithms LXGP-RM, LXLP-RM, LXLP-PL, and SC for $\eta=1$ (maximum total throughput) and $\eta=0.5$, respectively. When $\eta=0.5$, we can observe that the RM and PL methods stochastically dominate the SC algorithm, and this is also nearly the case when $\eta=1$. In fact, the RM and PL methods yield a roughly 4-fold average throughput gain over SC method. This is not surprising, as we simulate a rich interference environment, and the new methods can coordinate the scheduled UEs and transmission powers of nearby BSs, while the SC method does not. The RM methods show similar performance as we use the same termination criteria. They are a little bit different from the PL method possibly due to optimizing different objective functions (only equivalent before integer relaxation).

\cref{fig3} illustrates the power and SINR distributions of the same four algorithms for $\eta=1$. The proposed three new methods choose one of three values for the power: the maximum, half of the maximum or zero. This corresponds to assigning either one, two or no blocks; the two blocks can be assigned to one UE or two. On the contrary, the SC methods schedule all three blocks and UEs with the power to each block around $\tfrac{10}{3}$. At each time instance the three new methods give up serving some subcarriers and users in exchange of boosting the SINR of the scheduled UEs on the chosen blocks. On the other hand, the SC method tries to serve everyone, and ends up with lower power and increased interference. The details of the scheduling decisions are in \cref{tab1}.

\begin{table}[!t]
\centering
\caption{Fraction of utilized channels and scheduled users per BS.}
\label{tab1}
\begin{tabular}{|l|l|l|l|l|l|l|}
\hline
Channels/Users & 0 & 1 & 2 & 3 & 4 & 5 \\\hline
LXLP-RM Channels & 0.3447 & 0.6316 & 0.0237 & 0 & -      & -      \\\hdashline
LXLP-RM Users    & 0.3447 & 0.5395 & 0.1158 & 0 & 0      & 0      \\\hline
LXLP-PL Channels & 0.3763 & 0.6105 & 0.0132 & 0 & -      & -      \\\hdashline
LXLP-PL Users    & 0.3763 & 0      & 0      & 0 & 0.0079 & 0.6158 \\\hline
SC Channels      & 0        & 0      & 0    & 1 & -      & -      \\\hdashline
SC Users         & 0      & 0      & 0      & 0 & 0      & 1      \\\hline
\end{tabular}
\end{table}

\begin{table}[!t]
\centering
%\caption{Total utilities and average numbers of iterations required of different algorithms and choices of $\eta$.}
\caption{Total utilities and average numbers of iterations required}
\label{tab2}
\begin{tabular}{|l|l|l|l|l|l|}
\hline
Algorithm & LXGP-RM & LXLP-RM & LXLP-PL & SC      \\ \hline\hline
\multicolumn{6}{|l|}{Utilities}\\ \hline
$\eta=1$   & 89.61 & 89.73 & 86.98 & 19.62 \\ \hline
$\eta=0.5$ & 182.7 & 182.5 & 182.1 & 86.10 \\ \hline\hline
\multicolumn{6}{|l|}{Iterations required}\\ \hline
$\eta=1$   & 280.3 & 176.9 & 176.2 & 2 \\ \hline
$\eta=0.5$ & 273.6 & 216.9 & 142.9 & 2 \\ \hline
\end{tabular}
\end{table}

\cref{tab2} compares the performance of the four algorithms with $\eta=0.5$ and $\eta=1$. The three coordination-based methods outperform the SC method significantly in this problem instance. The LXGP method always requires more iterations to converge than the fully-localized methods like the LXLP family. When $\eta=0.5$, LXLP-PL converges much faster than RM methods.%, due to partial linearization.

\section{Conclusion}\label{sec:conc}
In this paper, we generalized existing distributed non-convex optimization methods in two directions. First, we reduced the algorithm storage and communication complexity by exploiting a decomposable structure of the problem, and obtained a localized algorithm. Second, we relaxed the requirement of Lipschitz continuous gradients with a series of slowly-changing approximations. We then applied the developed algorithmic framework in different ways to generate distributed algorithms for the multi-cell resource allocation problem; the flexibility of implementing our framework is also a contribution. We compared these algorithms with the single-cell algorithm via simulation and showed the potential gains of using the distributed optimization methods developed.

%\section*{Acknowledgments}
%We would like to acknowledge the assistance of volunteers in putting together this example manuscript and supplement.

\bibliographystyle{siamplain}
\bibliography{references}
%\printbibliography

\newpage
\appendix
\section{Generalizations to time-varying graphs}\label{app:a}
In this appendix we provide the less strict assumptions to accommodate the case where the underlying graph is time-varying and directed. The proof of our result is based on this more general setting. But the purpose of these assumptions remains the same -- a distribution on the set of nodes will go to the uniform distribution exponentially fast with repeated application of the $W$ matrix, which is captured in \cref{app_c:l1}.

At each time slot $n$, the set of nodes $\Nc$ along with a set of time-variant directed edges $\Ec[n]$, form an directed graph $\Gc[n]=(\Nc,\Ec[n])$. Node $j$ can only send message to node $i$ in time slot $n$ if $j\ra i\in\Ec[n]$.\\
\noindent{\bf Assumption L\textprime}\\
{\bf (L3\textprime)} $\Gc_m[n]$ is $B_m$-strongly connected for all $m\in[M+1]$, where $\Gc_m[n]=(\Nc_m,\Ec_m[n]=\{i\ra j\in\Ec[n]:i\in\Nc_m\text{ or }j\in\Nc_m\})$, i.e. $(\Nc_m,\buU_{n=kB_m}^{(k+1)B_m-1}\Ec_m[n])$ is strongly connected for all $k\geq0$;\\
{\bf (L4\textprime)} For all $m\in[M+1]$ there is a matrix $\WB^m[n]$ associated with $\Nc_m$ -- each entry is non-zero if and only if there is a corresponding edge in $\Ec_m[n]$, and all positive entries must be greater than or equal to some fixed $\vartheta>0$. As before, $\WB^m[n]$ is doubly-stochastic after deleting the zero rows and columns whose indices are not in $\Ec_m[n]$.

\section{Proof of the Main Result}\label{app:b}
In this appendix we prove \cref{t7}. We start with an intuitive description of the roadmap of the entire proof in the following. Our proof follows the main structure of \cite{Scutari_NEXT_2016}. The convergence of \emph{NEXT} basically consists of two parts -- at the faster time scale the ``consensus convergence" of local variable iterates $\xB_i$ and local gradient iterates $\yB_i$ to the average iterate $\xBb$ and $\overline{\gd f}(\xBb)\triangleq\sum_{i\in\Nc}\gd f_i(\xBb)$ (the average gradient evaluated at the average iterate), respectively, and on the slower time scale the fixed point iterations of $\xBh_i(\bullet)$ (defined in \cref{51-2}) which we call ``path convergence" -- for details of this viewpoint and connection to two time-scale stochastic approximation see \cref{sec:sa}. It is shown in \cite{Scutari_NEXT_2016} that the fixed points of $\xBh_i(\bullet)$ coincide with the stationary solutions. The goal is thus to prove the iterates converge to the fixed points of $\xBh_i(\bullet)$ and the iterates of all nodes asymptotically agree. Our contributions are introducing partial dependency structure with a localization scheme and in addition successively approximating the possibly non-Lipschitz gradient objective functions. The latter significantly increases the proof hardness as the gradients of the objective functions may now be unbounded.

The core idea of \emph{NEXT} as in most primal algorithms, is in performing the following steps iteratively.
\begin{itemize}
\item The local optimization step for each node $i$ is to find the value of $\xBh_i$ at its iterate $\xB_i$. This $\xBh_i$ maps the iterate to the optimum of an approximated strongly convex version of the overall objective function $U$, which involves using a strongly convex surrogate for $i$'s own objective function, and a linearized approximation of others objective functions. This step is similar to doing gradient descent in a much more efficient way by taking advantage of the surrogates, and corresponds to the ``path convergence" mentioned above.
\item The consensus step is where every node communicates its iterate to its neighbors and also takes an average of its neighbors' iterates. We refer to the $\xB_i$'s asymptotically agreeing on their average $\xBb$ as the ``$x$-consensus convergence."
\end{itemize}
To make the algorithm more practical and fully decentralized, the original \emph{Inexact NEXT} \cite{Scutari_NEXT_2016} and our \emph{Localized Proximal Inexact NEXT} add multiple layers of approximations.
\begin{enumerate}[label=(\arabic*)]
\item We mentioned that the node $i$ linearize other nodes' gradients with $\pi_i\triangleq\sum_{j\in\Nc\sm\{i\}}\gd f_j(\xB_i)$. However, node $i$ only has the information of $\gd f_i(\xB_i)$. It hence keeps a variable $\yB_i$ that tracks the average gradient $\frac{1}{I}\sum_{j\in\Nc}\gd f_j$ so that node $i$ can approximate $\pi_i$ by $\piBt_i$ with the knowledge of $\gd f_i(\xB_i)$ and $\yB_i$ as in Algorithm \ref{a6}. The convergence of $\yB_i$ to the average gradient is then the ``$y$-consensus convergence." The convergence is also achieved through a gossip-type consensus scheme similar to that used for $\xB$.
\item With the approximation scheme using the $y$ variable, the fixed point iteration we actually use in Algorithm \ref{a6} for the local optimization step is $\xBt_i$ defined in \eqref{8-2}, which is similar to $\xBh_i$ but using $\piBt_i$ as the linearization constant. This makes the algorithm viable in practice in a fully decentralized scenario. To compare the behaviors of $\xBh_i(\xBb)$ and $\xBt_i=\xBt_i^*(\xB_i,\piBt_i)$, we construct a new ``averaging system" assuming that the $\xB_i$'s already converge to $\xBb$; this includes $\yB_i^{av}$, the average gradient evaluated at $\xBb$, and $\xBt_i^{av}=\xBt^*(\xBb,\piBt_i^{av})$, the local optimization map where $\piBt_i^{av}$ computed from $\yB_i^{av}$ is used as the linearization constant. Note that this ``averaging system" is constructed purely for analysis purposes.
\item We use a series of functions $\{\ft_{i,n}^*\}$ to approximate $f_i$ so that the local optimization map with ideal linearization is $\xBh_{i,n}$, and $\xBt_i^*$ with the $\yB$-approximation in contrast to just $\xBt_i$ in \emph{NEXT}. This is the approximation we add in addition to what was done in \emph{Inexact NEXT} \cite{Scutari_NEXT_2016}.
\item The inexactness of the algorithm chooses $\xB_i^{inx}$ within $\ep_i$ range of $\xBt_i$, which leads to another source of approximation. Because of the function approximation we use with the relaxed constraints on the schedules of $\{L_{i,n}\}$ and $\{\tau_{i,n}\}$, the difference between $\xB_i^{inx}$ and $\xB_i$ can potentially become increasingly larger if the error propagates, which also increases the proof hardness in contrast to the case of \emph{Inexact NEXT} where the difference $\|\xB_i^{inx}-\xBt_i\|$ is bounded.
\end{enumerate}

The proof of \cref{t7} consists of six parts. The theorem basically makes two claims: the nodes' iterates asymptotically agree, and they converge to one of the optima. The former will be the side product as we aim to prove the latter. In the first part of the proof, we first summarize the list of notations that will be used in the proof in \cref{App:B-1-1}, and then describe a few results and one key proposition in \cref{App:B-1-2}. \cref{p19} as the variant of Proposition 5 in \cite{Scutari_NEXT_2016} shows Lipschitz properties of $\xBh_{i,n}$. \cref{app_c:l1} and \cref{f24} describe the main machinery we use to show the ``$x$-consensus convergence" and ``$y$-consensus convergence," that is, the \emph{geometric convergence} of the product of doubly stochastic matrices to the all one matrix. The results \cref{l21}, \cref{l22}, \cref{l24}, and Technical Assumption T are technical lemmas regarding series or summations that arise in the analysis. The key proposition is \cref{p23}, which constitutes the core components of the proof.

We prove \cref{p23} (a) in the second part \cref{App:B-2}, which says the difference between $\xB_i^{inx}$ and $\xB_i$ cannot grow beyond a certain rate, and the tools used are the definition of minimization \eqref{8-2}, the strong convexity of $\ft_{i,n}^*$, and the $y$-consensus convergence. The difference is decomposed into $\|\xB_i^{inx}-\xBt_i\|$ and $\|\xBt_i-\xB_i\|$, where the former is bounded by $\ep_i[n]$ by definition, and the latter is bounded by $O\left(\frac{L_{i,n}}{\tau_{i,n}}\right)$ using a mathematical induction argument.

Part (b) of \cref{p23} establishes asymptotic consensus on $\xB$ among nodes as the third part of the proof \cref{App:B-3}. This part involves exploiting \cref{app_c:l1}, \cref{l21}, \cref{f24}, Part (a) of \cref{p23}, and series convergence. The generalization to multiple local dependency sets is also taken care of in Part (a) and (b) of \cref{p23} using simple inequalities regarding multi-dimensional spaces.

In the fourth part contained in \cref{App:B-4}, Part (c) of \cref{p23} is proved, which shows that the locally-optimized result using the ``$y$ approximation" $\xBt_i^{av}$ converges to the locally-optimized result with ideal linearization $\xBh_{i,n}(\xBb)$ evaluated at $\xBb$. In addition to applying the definitions of the maps, it is intuitive that the ``$y$-consensus convergence" in the ``averaging system" and hence \cref{app_c:l1} play a crucial role in the proof; Part (a) of \cref{p23} and Technical Assumption T which comes from \cref{l24} and the conditions of \cref{t7} are also used.

We prove Part (d) of \cref{p23} in the fifth part \cref{App:B-5}. Part (d) claims the actual locally-optimized result in the algorithm $\xBt_i$ converges to the locally-optimized result using the ``$y$ approximation" $\xBt_i^{av}$. The underlying reason of the convergence is the ``$x$-consensus convergence." Several previous results are all used in this proof, including all of Parts (a), (b), and (c), \cref{l21}, \cref{l24}, and Technical Assumption T.

\cref{af} then combines Parts (a), (c), and (d), convexity of $G$, and \cref{l22} to show that $\xBb$ converges to $\xBh_{i,\infty}(\xBb)$. With $\tau_n$ going to $0$, $\xBh_{i,\infty}$ is no longer a function but a correspondence; variational analysis is thus introduced to deal with the minimizers of correspondences in the first case of \cref{af}. Finally, we deal with unbounded gradient interior point in the second case of \cref{af}, using convexity and series convergence. Generalization to study the case of an unbounded gradient boundary point is left as an open question.

Comparing to the proof of \emph{NEXT}, the generalization to multiple local dependency set is not a technically hard one; it requires some simple inequalities as shown in part (a) and (b) of \cref{p23}. For the second generalization, we replace what was Lipschitz constant $L$ and strongly convex constant $\tau$ by series $\{L_n\}$ and $\{\tau_n\}$, which now could grow to infinity and decrease to zero, respectively. This does significantly increase the hardness of the proof. The conditions in \cref{t7}, the Technical Assumption T stated below, and \cref{l24} are made such that all the series now with $\{L_n\}$ and $\{\tau_n\}$ still converge. The unbounded gradient issue and the correspondence nature of $\xBh_{i,\infty}$ also make our scheme much trickier to analyze in comparison to \emph{NEXT}.

\subsection{Notations}\label{App:B-1-1}
We define a set of notations to proceed with the proof. All notation is defined for all $i$, $m$, and $n$, whenever applicable.\\
\textbf{Original variables}
\begin{itemize}
\item
$\xB^m[n]=(\I\{i\in\Nm\}\xB^m_i[n])_{i\in\Nc}=[\I\{1\in\Nc_m\}\xB^m_1[n]^T\es\cdots\es\I\{I\in\Nc_m\}\xB^m_I[n]^T]^T$: the concatenation of part $m$ decision variables from all nodes in $\Nm$ with padded zero for nodes not in $\Nm$; we also use $\xB^m[n]$ to refer to non-padded zero version $(\xB^m_i[n])_{i\in\Nm}$, i.e. the vector containing only $\xB^m_i[n]$ when $i$ is in $\Nm$, when the context is clear; the notation $(v_i)_{i\in\Sc}$, which denotes the vector concatenated from all the vectors of the form $v_i$ where the index $i$ is in the set $\Sc$, will be used throughout this Appendix
\item
$\yB^m[n]=(\I\{i\in\Nm\}\yB^m_i[n])_{i\in\Nc}$: the concatenation of part $m$ of $\yB$, which tracks the average (among nodes) gradients of $\gd_{\xB^m}f_i$ from the nodes in $\Nm$ in the algorithm
\item
$\rB^m[n]=(\gd_{\xB^m}f^*_{i,n}[n])_{i\in\Nc}$: the concatenation of ground truth gradient, with $\gd_{\xB^m}f^*_{i,n}[n]=\gd_{\xB^m}f^*_{i,n}(\xB_i[n])$; adding $\I\{i\in\Nm\}$ is unnecessary as the gradient would be zero for those nodes not depending on $\xB^m$
\item
$\Dl\rB^m[l,n]=(\Dl\rB^m_i[l,n])_{i\in\Nc}$: the gradient difference, with $\Dl\rB^m_i[l,n]=\gd_{\xB^m}f^*_{i,l}[l]-\gd_{\xB^m}f^*_{i,n}[n]$ ($l\leq n$)
\item
$\piBt_i[n]$: see Line 11 of \cref{a6}
\item
$\xBt_i[n]=\xBt^*_i(\xB_i[n],\piBt_i[n])$: see Line 5 of \cref{a6} and \cref{8-2}
\end{itemize}
\textbf{Average variables}
\begin{itemize}
\item
$\xBb^m[n]=\frac{1}{I_m}\sum_{i\in\Nc_m}\xB^m_i[n]$: average of decision variable
\item
$\yBb^m[n]=\frac{1}{I_m}\sum_{i\in\Nc_m}\yB^1_i[n]$: average of gradient tracking variable
\item
$\rBb^m[n]=\frac{1}{I_m}\sum_{i\in\Nc_m}\gd_{\xB^m}f^*_{i,n}[n]$: average of ground truth gradient
\item
$\Dl\rBb^m[l,n]=\frac{1}{I_m}\sum_{i\in\Nc_m}\Dl\rB^m_i[l,n]$: average of gradient difference
\end{itemize}
\textbf{Tracking system using average variables}
\begin{itemize}
\item
$\gd f^{*,av}_{i,n}[n]=\gd f^*_{i,n}(\xBb[n])$
\item
$\rB^{m,av}[n]=(\gd_{\xB^m}f^{*,av}_{i,n}[n])_{i\in\Nc}$: the concatenation of ground truth gradient evaluated at average decision variable
\item
$\Dl\rB^{m,av}[l,n]=(\Dl\rB^{m,av}_i[l,n])_{i\in\Nc}$: the gradient difference evaluated at average decision variable, with $\Dl\rB^{m,av}_i[l,n]=\gd_{\xB^m}f^{*,av}_{i,n}[l]-\gd_{\xB^m}f^{*,av}_{i,n}[n]$
\item
$\yB^{m,av}_i[n+1]=\sum_jw^m_{ij}[n]\yB^{m,av}_j[n]+\Dl\rB^{m,av}_i[n+1,n]$: tracking of average gradient evaluated at average decision variable, with $\yB^{m,av}_i[0]=\gd_{\xB^m}f^{*,av}_{i,n}[0]$; concatenating $\yB^{m,av}_i[n+1]$ for $i\in\Nc$ makes $\yB^{m,av}[n]=(\I\{i\in\Nm\}\yB^{m,av}_i[n])_{i\in\Nc}$
\item
$\piBt^{m,av}_i[n]=I_m\yB^{m,av}_i[n]-\gd_{\xB^m}f^{*,av}_{i,n}[n]$
\item
$\xBt^{av}_i[n]=\xBt^*_i(\xBb_i[n],\piBt^{av}_i[n])$: optimized result evaluated at average decision variable and average tracking system
\item
$\rBb^{m,av}[n]=\frac{1}{I_m}\sum_{i\in\Nc_m}\gd_{\xB^m}f^{*,av}_{i,n}[n]$: average of ground truth gradient evaluated at average decision variable
\end{itemize}
\textbf{Doubly stochastic matrices}
\begin{itemize}
\item
$\PB^m[n,l]=\WB^m[n]\WB^m[n-1]\cdots\WB^m[l]\quad n\geq l$
\item
$\WBh^m[n]=\WB^m[n]\otimes I_{d_m}$
\item
$\PBh^m[n,l]=\WBh^m[n]\WBh^m[n-1]\cdots\WBh^m[l]=\PB^m[n,l]\otimes I_{d_m}\quad n\geq l$
\item
$J^m=\frac{1}{I_m}\1_{\Nc_m}\1_{\Nc_m}^T\otimes\IB_{d_m}$, where $\1_{\Nc_m}=\{\I\{i\in\Nm\}:i\in\Nc\}$, and $\IB$ is the identity matrix
\item
$J^m_\perp=\IB_{d_mI_m}-J^m$, where $\IB_{d_mI_m}$ is the identity matrix with dimension $d_m\times I_m$
\end{itemize}

\subsection{Key Propositions}\label{App:B-1-2}
The next proposition is a variant of Proposition 5 in \cite{Scutari_NEXT_2016}.

\begin{proposition}\label{p19}
Let $\piB^\Si_i(\xBt)=\sum_{j\neq i}\gd_{\xB^\Si}f^*_{j,n}(\xBt)=\left(\sum_{j\in\Nm,j\neq i}\gd_{\xB^m}f^*_{j,n}(\xBt^\Nm)\right)_{m\in\Si}$ be the concatenation of $\sum_{j\in\Nm,j\neq i}\gd_{\xB^m}f^*_{j,n}(\xBt^\Nm)$ for all $m$ in $\Sc_i$. Define the mapping $\xBh^\Si_{i,n}(\cdot):\Kc\ra\Kc_\Si=\Pi_{m\in\Si}\Kc_m$ by
\beq\label{28}
\xBh^\Si_{i,n}(\xBt)=\underset{\xB^\Si}{\arg\min}\es\tilde{f}^*_{i,n}(\xB^\Si;\xBt^\Si)+\piB^\Si_i(\xBt)^T(\xB^\Si-\xBt^\Si)+G(\xB^c)\quad\forall\es i\in\Nc,
\eeq
and the mapping $\xBh_{i,n}(\cdot):\Kc\ra\Kc$ by $\xBh_{i,n}(\xBt)=\left(\xBh^\Si_{i,n}(\xBt),\xBt^{\Nc\sm\Si}\right)$, that is, preserving everything in the $\Kc_{\Nc\sm\Si}$ subspace unchanged while mapping with $\xBh^\Si_{i,n}(\cdot)$ in the $\Si$ subspace. Then, under Assumptions A, F, and N, the mapping $\xBh_{i,n}(\cdot)$ has the following properties:
\begin{enumerate}[label=(\alph*)]
\item
$\forall\es\zB\in\Kc$ and $i\in\Nc$,
\[
(\xBh_{i,n}(\zB)-\zB)^T\gd F(\zB)+G(\xBh_{i,n}(\zB))-G(\zB)\leq-\taumin_n\|\xBh_{i,n}(\zB)-\zB\|^2,
\]
where $F(\xB)=\sum_{i=1}^If_i(\xB)$. Here we use $G(\xB)$ and $G(\xB^c)$ interchangeably as they are the same thing.
\item
$\xBh_{i,n}(\cdot)$ is Lipschitz continuous, i.e. $\|\xBh_{i,n}(\wB)-\xBh_{i,n}(\zB)\|\leq L_{i,n}\|\wB-\zB\|\quad\forall\es\wB,\zB\in\Kc$ for $i\in\Nc$\footnote{Note that this holds for $\xBh^\Si_{i,n}$ as well because the elements in the $\Kc_{\Nc\sm\Si}$ subspace just cancel each other out.}.
%\item
%The set of fixed points of $\xBh_i(\cdot)$ coincides with the set of stationary solutions of the original problem.
\end{enumerate}
\end{proposition}
The only thing we do is to substitute $\ft_i$ in \cite{Scutari_NEXT_2016} as $\ft^*_{i,n}$. Although the equations are written in localization form, it does not really change anything here.

\begin{lemma}\label{app_c:l1}
Define $\PB[n,l]\triangleq\WB[n]\WB[n-1]\cdots\WB[l]$. Then under Assumption L\textprime (doubly stochasticity and lower bounded entries for edges),
\[
\left\|\PB[n,l]-\frac{1}{I}\1\1^T\right\|\leq c_0\rho^{n-l+1},\es\forall\es n\geq l
\]
for some $c_0>0$ and $\rho\in(0,1)$.
\end{lemma}
Strictly speaking the above is for the $\WB^c=\WB^{M+1}$, which is the matrix used for the averaging of the entire network, in Assumption L\textprime. For $\WB^m$ where $m\in[M]$, the Lemma also holds after we delete the zero rows/columns. In this case the product converges to $\frac{1}{I_m}\1\1^T$ where the $\1$ is of the proper dimension. From now on we will take $\rho$ as the largest geometric convergence factor among all $\WB^m$, $m\in[M+1]$.

Before proving \cref{t7}, we will first prove the following proposition.

\begin{proposition}\label{p23}
Let $\{\xB^m[n]\}_n\triangleq\{(\xB^m_i[n])_{i\in\Nc_m}\}$ and $\{\xBb^m[n]\}_n\triangleq\left\{\frac{1}{I_m}\sum_{i\in\Nc_m}\xB^m_i[n]\right\}_n$, $m\in[M+1]$ be the sequences generated by \cref{a6}, in the settings of the \cref{t7}. Then the following holds:
\begin{enumerate}[label=(\alph*)]
\item
For all $n$, $\|\xB^{m,inx}_i[n]-\xB^m_i[n]\|\leq\frac{c^mL_{i,n}}{\tau_{i,n}}\quad\forall\es i\in\Nc_m,m\in[M+1]$.
\item
$\lim_{n\ra\infty}\|\xB^m_i[n]-\xBb^m[n]\|=0$, $\sum_{n=1}^\infty\al[n]\|\xB^m_i[n]-\xBb^m[n]\|<\infty$, $\sum_{n=1}^\infty\|\xB^m_i[n]-\xBb^m[n]\|^2<\infty\quad\forall\es i\in\Nc_m,m\in[M+1]$.
\item
$\lim_{n\ra\infty}\|\xBt^{av}_i[n]-\xBh^\Si_{i,n}(\xBb[n])\|=0$, $\sum_{n=1}^\infty\al[n]\Lmax_n\|\xBt^{av}_i[n]-\xBh^\Si_{i,n}(\xBb[n])\|$ $<\infty\quad\forall\es i\in\Nc$.
\item
$\lim_{n\ra\infty}\|\xBt^m_i[n]-\xBt^{m,av}_i[n]\|=0$, $\sum_{n=1}^\infty\al[n]\Lmax_n\|\xBt^m_i[n]-\xBt^{m,av}_i[n]\|<\infty\quad\forall\es i\in\Nc_m,m\in[M+1]$.
\end{enumerate}
\end{proposition}

We will use the following assumption frequently when proving \cref{p23}. These are the exact technical inequalities we use in the proof, while all of them are implicitly implied by the conditions of \cref{t7} as we will show below.\\
\noindent{\bf Technical Assumption T}\\
{\bf (T1)} $\lim_{n\ra\infty}\rho^n\frac{\Lmax_n}{\taumin_n}=0$;\\
{\bf (T2)} $\lim_{n\ra\infty}\frac{\Lmax_n}{\taumin_n}\sum_{l=0}^{n-1}\rho^{n-l}\frac{\al[l](\Lmax_l)^2}{\taumin_l}=0$;\\
{\bf (T3)} $\lim_{n\ra\infty}\frac{1}{\taumin_n}\sum_{l=0}^{n-1}\rho^{n-l}\|\gd f^*_{i,l}(\xB)-\gd f^*_{i,l-1}(\xB)\|=0$ for all $\xB$ and $i$;\\
{\bf (T4)} $\sum_{n=1}^\infty\rho^n\frac{\Lmax_n}{\taumin_n}<\infty$;\\
{\bf (T5)} $\sum_{n=1}^\infty\frac{\al[n]\Lmax_n}{\taumin_n}\sum_{l=0}^{n-1}\rho^{n-l}\frac{\al[l](\Lmax_l)^2}{\taumin_l}<\infty$;\\
{\bf (T6)} $\sum_{n=1}^\infty\frac{\al[n]\Lmax_n}{\taumin_n}\sum_{l=0}^{n-1}\rho^{n-l}\|\gd f^*_{i,l}(\xB)-\gd f^*_{i,l-1}(\xB)\|<\infty$ for all $\xB$ and $i$.
\begin{proof}\hfill\\
{\bf (T1)}: it should be evident from the conditions of \cref{t7} that none of the parameters could be growing or decaying at an exponential rate. We are considering the setting where $\al[n]$ and $\taumin_n$ are going to zero while $\Lmax_n$ is going to infinity. The condition $\sum_{n=0}^\infty(\Lmax_n)^3\left(\frac{\al[n]}{\taumin_n}\right)^2<\infty$ implies that if either $\Lmax_n$ is growing exponentially or $\taumin_n$ is decaying exponentially, then $\al[n]$ must also be decaying exponentially. But then $\sum_{n=0}^\infty\taumin_n\al[n]=\infty$ would never be possible.\\
{\bf (T2)}: recall the conditions of \cref{t7} imply $\lim_{n\ra\infty}\al[n]\frac{(\Lmax_n)^3}{(\taumin_n)^3}=0$, then apply the first part of \cref{l24}.\\
{\bf (T3)}: from $\lim_{n\ra\infty}\frac{\etamax_n}{\taumin_n}=0$ and the first part of \cref{l24}.\\
{\bf (T4)}: again the parameters are not growing at an exponential rate.\\
{\bf (T5)}: from $\sum_{n=0}^\infty(\Lmax_n)^3\left(\frac{\al[n]}{\taumin_n}\right)^2<\infty$ and the second part of \cref{l24}.\\
{\bf (T6)}: from $\sum_{n=0}^\infty\frac{\al[n]\Lmax_n\etamax_n}{\taumin_n}<\infty$ and the second part of \cref{l24}.
\end{proof}

\subsection{Proof of \cref{p23} (a)}\label{App:B-2}
Consider a local dependency set $\Nm$ and any node $i\in\Nm$. By the definition of $\xB^\Si_i$ defined in the minimization of \cref{8-2}, we have
\beq\label{29}
\sum_{m\in\Si}(\xB^m_i[n]-\xBt^m_i[n])^T\left[\gd_{\xB^m_i}\ft^*_{i,n}(\xBt^\Si_i[n];\xB^\Si_i[n])+\piBt^m_i[n]\right]+(\xB^c_i[n]-\xBt^c_i[n])^T\pa G(\xBt^c_i[n])\geq0.
\eeq
From the Line 11 of \cref{a6} and (F2\textprime), we have
\beq
\piBt^m_i[n]=I_m\cdot\yB^m_i[n]-\gd_{\xB^m_i}\ft^*_{i,n}(\xB^\Si_i[n];\xB^\Si_i[n]).
\eeq
Substitute this result into \cref{29} and rearrange the terms to get
\bal\label{30}
&\quad\tau_{i,n}\|\xB^\Si_i-\xBt^\Si_i\|^2=\tau_{i,n}\sum_{m\in\Si}\|\xB^m_i-\xBt^m_i\|^2&\\
&\leq(\xB^\Si_i-\xBt^\Si_i)^T\cdot[\gd_{\xB^\Si_i}\ft^*_{i,n}(\xB^\Si_i;\xB^\Si_i)-(\gd_{\xB^\Si_i}\ft^*_{i,n}(\xBt^\Si_i;\xB^\Si_i)]&(\text{strong convexity of }\ft^*_{i,n})\\
&\leq\sum_{m\in\Si}(\xB^m_i-\xBt^m_i)^T(I_m\yB^m_i)+(\xB^c_i-\xBt^c_i)^T\pa G(\xBt^c_i)]&(\text{from }\cref{29})\\
&\leq\sum_{m\in\Si}(I_m\|\yB^m_i\|)\cdot\|\xB^m_i-\xBt^m_i\|+L_G\|\xB^c_i-\xBt^c_i\|&(\text{C-S inequality+(A2)}).
\eal
We have omitted all the time indices in \cref{30} since all the variables have the same time index $[n]$.

Suppose that $\|\yB^m_i\|$ is bounded by $l_mL_{i,n}$ for all $m\in\Si$. Then \cref{30} is of the form
\beq\label{31}
\tau_{i,n}\sum_{m\in\Si}\|\xB^m_i-\xBt^m_i\|^2\leq\sum_{m\in\Si}l_mL_{i,n}\|\xB^m_i-\xBt^m_i\|,
\eeq
which implies that all $\|\xB^m_i-\xBt^m_i\|$'s are bounded by $\frac{\sum_{m\in\Si}l_mL_{i,n}}{\tau_{i,n}}$\footnote{Note that $\xB^c$ refers to $\xB^{M+1}$ and $M+1$ is in $\Si$ as well if the part exists. Therefore, the second term $L_G\|\xB^c_i-\xBt^c_i\|$ can be put into the summation in the first term.}. This is due to the following argument: if $\{x_i\},\{l_i\}$ are non-negative and $\sum_ix^2_i\leq\sum_il_ix_i$, then $\max\{x_i\}\leq\sum_il_i$; otherwise, W.O.L.G. we can assume $x_1=\max\{x_i\}$ and hence $\sum_il_i<x_1$, then the following holds
\[
\sum_ix^2_i>x^2_1>x_1\sum_il_i>\sum_il_ix_i,
\]
which is a contradiction. Thus, with \cref{31}, we get
\[
\|\xB^{m,inx}_i[n]-\xB^m_i[n]\|\leq\|\xB^{m,inx}_i[n]-\xBt^m_i[n]\|+\|\xBt^m_i[n]-\xB^m_i[n]\|\leq\ep^m_i[n]+\frac{\sum_{m\in\Si}l_mL_{i,n}}{\tau_{i,n}}\leq\frac{c^mL_{i,n}}{\tau_{i,n}},
\]
where $c^m$ is some constant independent of $n$ and $i$. This proves the claim. It only remains to show that $\|\yB^m_i\|$ is actually bounded by $l_mL_{i,n}$.

We use mathematical induction to finish the proof. The statement is that
\beq
\|\Dl\xB^{m,inx}_i[n]\|=\|\xB^{m,inx}_i[n]-\xB^m_i[n]\|\leq\frac{c^mL_{i,n}}{\tau_{i,n}},\es\|\yB^m_i[n]\|\leq l_mL_{i,n}
\eeq
holds for all $n$. We have already shown that the latter implies the former. The base case is obvious as we initialize $\yB^m_i[0]$ to be $\gd_{\xB^m}f^*_{i,0}[0]$, which is assumed to be Lipschitz continuous. For the induction step, we assume the statement is true for $n-1$ and proved the latter part $\es\|\yB^m_i[n]\|\leq l_mL_{i,n}$ holds for $n$.

By the definition of $\yB$, we have
\beq
\yB^m_i[n]=\WBh^m[n-1]\yB^m_i[n-1]+\Dl\rB^m_i[n,n-1]
\eeq
where
\bal\label{31-2}
&\quad\|\Dl\rB^m_i[n,n-1]\|\\
&=\|\gd_{\xB^m}f^*_{i,n}[n]-\gd_{\xB^m}f^*_{i,n-1}[n-1]\|
=\|\gd f^*_{i,n}(\xB_i[n])-\gd_{\xB^m}f^*_{i,n-1}(\xB_i[n-1])\|\\
&\leq\|\gd_{\xB^m}f^*_{i,n}(\xB_i[n])-\gd_{\xB^m}f^*_{i,n}(\xB_i[n-1])+\gd_{\xB^m}f^*_{i,n}(\xB_i[n-1])-\gd_{\xB^m}f^*_{i,n-1}(\xB_i[n-1])\|\\
&\leq L_{i,n}\|\xB_i[n]-\xB_i[n-1]\|+\|\gd_{\xB^m}f^*_{i,n}(\xB_i[n-1])-\gd_{\xB^m}f^*_{i,n-1}(\xB_i[n-1])\|.
\eal
To reach the last line we utilize the triangle inequality and the Lipschitz continuity of $\gd f^*_{i,n}$. For the first term,
\bal\label{31-3}
\|\xB^m_i[n]-\xB^m_i[n-1]\|&\leq\|\xB^m[n]-\xB^m[n-1]\|
\leq\left\|\frac{\1_\Nm^T\otimes\IB_{d_m}}{I_m}(\xB^m[n]-\xB^m[n-1])\right\|\\
&=\|\xBb^m[n]-\xBb^m[n-1]\|\qquad(\text{\cref{f24} (c)})\\
&=\al[n-1]\left\|\frac{\1_\Nm^T\otimes\IB_{d_m}}{I_m}\Dl\xB^{m,inx}[n-1]\right\|\qquad(\text{\cref{f24} (e)})\\
&\leq c_1\al[n-1]\|\Dl\xB^{m,inx}[n-1]\|.
\eal
Using the induction hypothesis of $\Dl\xB$, we obtain
\bals
&\quad\|\Dl\rB^m_i[n,n-1]\|
\leq L_{i,n}\|\xB_i[n]-\xB_i[n-1]\|+c_2\eta_{i,n}\\
&\leq c_1\al[n-1]L_{i,n}\|\Dl\xB^{m,inx}[n-1]\|+c_2\eta_{i,n}
\leq\frac{c_3\al[n-1]L_{i,n}L_{i,n-1}}{\tau_{i,n-1}}+c_2\eta_{i,n}.
\eals
Therefore, we finally obtain
\bals
\|\yB^m_i[n]\|
&\leq\left\|\WBh^m[n-1]\yB^m_i[n-1]\right\|+\|\Dl\rB^m_i[n,n-1]\|
\leq c_4l_mL_{i,n-1}+\frac{c_3\al[n-1]L_{i,n}L_{i,n-1}}{\tau_{i,n-1}}+c_2\eta_{i,n}\\
&\leq L_{i,n}\left(c_4l_m+\frac{c_3\al[n-1]L_{i,n-1}}{\tau_{i,n-1}}+\frac{c_2\eta_{i,n}}{L_{i,n}}\right)
\leq c_5L_{i,n}
\eals
using the induction hypothesis of $\Dl\yB$ and the fact that $\frac{\al[n-1]L_{i,n-1}}{\tau_{i,n-1}}$ also goes to zero when $n\ra\infty$ implied by the condition of \cref{t7}.

\subsection{Proof of \cref{p23} (b)}\label{App:B-3}
We only prove the case for $\xB^c=\xB^{M+1}$ to save the ubiquitous subscript of $m$. The proof of the claims for general $\xB^m$ is exactly the same with appropriate substitutions of $\xB^c,\WBh,\PB,\rB^c,J,J_\perp,\1_I,I$ by $\xB^m,\WBh^m,\PB^m,\rB^m,J^m,J^m_\perp,\1_{\Nc_m},I_m$.

\begin{enumerate}[label=(\roman*),wide]
\item
\[
\xB^c[n]-\1_I\otimes\xBb^c[n]=\xB^c[n]-J\xB^c[n]=J_\perp\xB^c[n]\triangleq\xB^c_\perp[n].
\]
Notice that with \cref{f24} (d) and (e), the difference of $\xB^c[n]$ and $\1_I\otimes\xBb^c[n]$ which is $\xB^c_\perp[n]$ can be expressed as a linear combination of $\xB^c_\perp[n-1]$ and $\Dl\xB^{c,inx}[n-1]$. We can thus expand $\xB^c_\perp[n-1]$ iteratively as follows:
\bal\label{35}
\xB^c_\perp[n]
&=J_\perp\WBh[n-1]\xB^c_\perp[n-1]+\al[n-1]J_\perp\WBh[n-1]\Dl\xB^{c,inx}[n-1]\\
&=J_\perp\WBh[n-1](J_\perp\WBh[n-2]\xB^c_\perp[n-2]+\al[n-2]J_\perp\WBh[n-2]\Dl\xB^{c,inx}[n-2])\\
&\qquad\qquad+\al[n-1]J_\perp\WBh[n-1]\Dl\xB^{c,inx}[n-1]\\[-10pt]
&\es\vdots\\[-10pt]
&=J_\perp\WBh[n-1]J_\perp\WBh[n-2]\cdots J_\perp\WBh[0]\xB^c_\perp[0]
+\sum_{l=0}^{n-1}J_\perp\WBh[n-1]\cdots J_\perp\WBh[l]\al[l]\Dl\xB^{c,inx}[l]\\
&=\left[\left(\PB[n-1,0]-\frac{1}{I}\1_I\1_I^T\right)\otimes I_m\right]\xB^c_\perp[0]+\sum_{l=0}^{n-1}\left[\left(\PB[n-1,l]-\frac{1}{I}\1_I\1_I^T\right)\otimes I_m\right]\al[l]\Dl\xB^{c,inx}[l]
\eal
where the last equation resulted from \cref{f24} (b). From \cref{p23} (a) we know
\beq\label{36}
\|\Dl\xB^{c,inx}[n]\|\leq\|\Dl\xB^{inx}[n]\|=\sqrt{\sum_{i=1}^I\|\Dl\xB^{inx}_i[n]\|^2}\leq c_1\max_{i}\|\Dl\xB^{inx}_i[n]\|\leq\frac{c_2\Lmax_n}{\taumin_n}
\eeq
for some constants $c_1$ and $c_2$. Consequently, we get
\beq\label{37}
\|\xB^c_\perp[n]\|\leq c_3\rho^n+c_4\sum_{l=0}^{n-1}\rho^{n-l}\frac{\al[l]\Lmax_l}{\taumin_l}\xrightarrow{n\ra\infty}0
\eeq
by first utilizing triangle inequality, and then using \cref{36}, \cref{app_c:l1}, and finally \cref{l21} (a). Remark that $\lim_{n\ra\infty}\al[n]\left(\frac{\Lmax_n}{\taumin_n}\right)^3=0$ implies $\lim_{n\ra\infty}\frac{\al[n]\Lmax_n}{\taumin_n}=0$, which we use in \cref{37} as the condition of \cref{l21} (a).
\item
\bals
%\hspace*{-15pt}
&\lim_{n\ra\infty}\sum_{k=1}^n\al[k]\|\xB^c_i[k]-\xBb^c[k]\|
\leq\lim_{n\ra\infty}\sum_{k=1}^n\al[k]\|\xB^c_\perp[k]\|&\\
\leq&\lim_{n\ra\infty}\sum_{k=1}^n\al[k]\left(c_3\rho^k+c_4\sum_{l=0}^{k-1}\rho^{k-l}\frac{\al[l]\Lmax_l}{\taumin_l}\right)&(\text{from \cref{37}})\\
\leq&\lim_{n\ra\infty}\left(c_3\sum_{k=1}^n\rho^k\al[k]+c_4\rho\sum_{k=1}^n\sum_{l=1}^k\rho^{k-l}\al[k]\frac{\al[l-1]\Lmax_{l-1}}{\taumin_{l-1}}\right)<\infty.
\eals
The bound for the last term comes from \cref{l21} (b).
\item
\bals
\lim_{n\ra\infty}\sum_{k=1}^n\|\xB^c_\perp[k]\|^2
&\leq\lim_{n\ra\infty}\left(c_3^2\sum_{k=1}^n\rho^{2k}+2c_3c_4\rho\sum_{k=1}^n\sum_{l=1}^k\rho^{2k-l}\frac{\al[l-1]\Lmax_{l-1}}{\taumin_{l-1}}\right.\\
&\left.\qquad+c_4^2\rho^2\sum_{k=1}^n\sum_{l=1}^k\sum_{t=1}^k\rho^{2k-l-t}\frac{\al[l-1]\Lmax_{l-1}}{\taumin_{l-1}}\frac{\al[t-1]\Lmax_{t-1}}{\taumin_{t-1}}\right)<\infty.
\eals
The bound for the first term is natural. The double summation is bounded due to the second equality \cref{l21} (b) with $(\lm,\be[k],\nu[l])$ being $(\rho,\rho^k,\frac{\al[l-1]\Lmax_{l-1}}{\taumin_{l-1}})$. The condition of \cref{t7} $\sum_{n=1}^\infty(\Lmax_n)^3\left(\frac{\al[n]}{\taumin_n}\right)^2<\infty$ guarantees that $\sum_{n=1}^\infty\left(\frac{\al[n]\Lmax_n}{\taumin_n}\right)^2<\infty$. The inequality of the triple summation follows from
\bals
&\lim_{n\ra\infty}\sum_{k=1}^n\sum_{l=1}^k\sum_{t=1}^k\rho^{2k-l-t}\frac{\al[l-1]\Lmax_{l-1}}{\taumin_{l-1}}\frac{\al[t-1]\Lmax_{t-1}}{\taumin_{t-1}}\\
\leq&\lim_{n\ra\infty}\sum_{k=1}^n\sum_{l=1}^k\sum_{t=1}^k\rho^{k-l}\cdot\rho^{k-t}\cdot\frac{\left(\frac{\al[l-1]\Lmax_{l-1}}{\taumin_{l-1}}\right)^2+\left(\frac{\al[t-1]\Lmax_{t-1}}{\taumin_{t-1}}\right)^2}{2}\\
\leq&\lim_{n\ra\infty}\frac{1}{1-\rho}\sum_{k=1}^n\sum_{l=1}^k\rho^{k-l}\left(\frac{\al[l-1]\Lmax_{l-1}}{\taumin_{l-1}}\right)^2<\infty,
\eals
where the last inequality is due to the first equality of \cref{l21} (b). Again, the convergence of $\sum_{n=1}^\infty\left(\frac{\al[n]\Lmax_n}{\taumin_n}\right)^2$ is implied by the convergence of $\sum_{n=1}^\infty(\Lmax_n)^3\left(\frac{\al[n]}{\taumin_n}\right)^2$.
\end{enumerate}

\subsection{Proof of \cref{p23} (c)}\label{App:B-4}
We exploit the optimality of $\xBt^{av}_i$ and (F1\textprime) and (A2) to get
\beq\label{38}
\left[\xBh^\Si_{i,n}(\xBb)-\xBt^{av}_i\right]^T
\left[\gd_{\xB^\Si}\ft^*_{i,n}(\xBt^{av}_i;\xBb^\Si)+\piBt^{av}_i+(\0^{\Si\sm\{c\}},\pa G(\xBt^{c,av}_i))\right]\geq0;
\eeq
and the optimality of $\xBb$ (for the mapping of $\xBh^\Si_{i,n}(\xBb)$) leads to
\beq\label{39}
\left[\xBt^{av}_i-\xBh^\Si_{i,n}(\xBb)\right]^T
\left[\gd_{\xB^\Si}\ft^*_{i,n}(\xBh^\Si_{i,n}(\xBb);\xBb^\Si)+\piB^\Si_i(\xBb)+(\0^{\Si\sm\{c\}},\pa G(\xBh^c_{i,n}(\xBb))\right]\geq0.
\eeq
$\0^{\Si\sm\{c\}}$ is an all zero vector in the subspace $\Kc_{\Si\sm\{c\}}$. It should be clear that $\xBh^c_{i,n}(\xBb)$ refers to the component of $\xBh_{i,n}(\xBb)$ in the subspace $\Kc_c$. Then
\bal\label{40}
%\hspace*{-10pt}
&\tau_{i,n}\left\|\xBh^\Si_{i,n}(\xBb)-\xBt^{av}_i\right\|^2&\\
\leq&\left[\xBh^\Si_{i,n}(\xBb)-\xBt^{av}_i\right]^T\cdot\left[\gd_{\xB^\Si}\ft^*_{i,n}(\xBt^{av}_i;\xBb^\Si)-\gd_{\xB^\Si}\ft^*_{i,n}(\xBh^\Si_{i,n}(\xBb);\xBb^\Si)\right.&\\
&\qquad\qquad\left.+\left(\0^{\Si\sm\{c\}},\pa G(\xBh^c_{i,n}(\xBb))-\pa G(\xBt^{c,av}_i))\right)\right]&(\text{(F1\textprime) and (A2)})\\
\leq&\left[\xBh^\Si_{i,n}(\xBb)-\xBt^{av}_i\right]^T\cdot\left[\piBt^{av}_i-\piB^\Si_i(\xBb)\right]&(\text{\cref{38} and \cref{39}})\\
\leq&\left\|\xBh^\Si_{i,n}(\xBb)-\xBt^{av}_i\right\|\cdot\left\|\piBt^{av}_i-\piB^\Si_i(\xBb)\right\|&(\text{C-S inequality}).
\eal
From \cref{40},
\bals
&\left\|\xBh^\Si_{i,n}(\xBb)-\xBt^{av}_i\right\|\leq\frac{1}{\tau_{i,n}}\left\|\piBt^{av}_i-\piB^\Si_i(\xBb)\right\|\\
=&\frac{1}{\tau_{i,n}}\left\|(I_m\yB^{m,av}_i)_{m\in\Si}-\gd_{\xB^\Si}f^*_{i,n}(\xBb)-\sum_{j\neq i}\gd_{\xB^\Si}f^*_{j,n}(\xBb)\right\|\\
\leq&\frac{1}{\tau_{i,n}}\sum_{m\in\Si}\|I_m\yB^{m,av}_i-I_m\rBb^{m,av}\|\\
\leq&\frac{1}{\tau_{i,n}}\sum_{m\in\Si}I_m\|\yB^{m,av}-\1_{\Nc_m}\otimes\rBb^{m,av}\|.
\eals
Up until now, the context is clear enough to allow us to drop all $[n]$ time index. Again, we only focus on the case of $m=M+1$; that is, proving $\frac{1}{\tau_{i,n}}\|\yB^{c,av}-\1_I\otimes\rBb^{c,av}\|$ goes to zero. We calculate
\bal\label{33}
\yB^{c,av}[n]&=\WBh[n-1]\yB^{c,av}[n-1]+\Dl\rB^{c,av}[n,n-1]\\
&=\WBh[n-1](\WBh[n-2]\yB^{c,av}[n-2]+\Dl\rB^{c,av}[n-1,n-2])+\Dl\rB^{c,av}[n,n-1]\\
&=\PBh[n-1,n-2](\WBh[n-3]\yB^{c,av}[n-3]+\Dl\rB^{c,av}[n-2,n-3])\\
&\qquad+\PBh[n-1,n-1]\Dl\rB^{c,av}[n-1,n-2]+\Dl\rB^{c,av}[n,n-1]\\[-10pt]
&\es\vdots\\[-10pt]
&=\PBh[n-1,0]\rB^{c,av}[0]+\sum_{l=1}^{n-1}\PBh[n-1,l]\Dl\rB^{c,av}[l,l-1]+\Dl\rB^{c,av}[n,n-1],
\eal
and
\bal\label{34}
\1_I\otimes\rBb^{c,av}[n]
&=\1_I\otimes\left(\rBb^{c,av}[0]+\sum_{l=1}^n\Dl\rBb^{c,av}[l,l-1]\right)\\
&=\1_I\cdot\frac{\1_I^T\otimes I_d}{I}\left(\rB^{c,av}[0]+\sum_{l=1}^n\Dl\rB^{c,av}[l,l-1]\right)\\
&=J\rB^{c,av}[0]+\sum_{l=1}^{n-1}J\Dl\rB^{c,av}[l,l-1]+J\Dl\rB^{c,av}[n,n-1].
\eal
Similar to \cref{31-2} we have
\bal\label{41}
&\|\Dl\rB^{c,av}[l,l-1]\|\\
=&\sum_{i\in\Nc}\|\gd_{\xB^c}f^*_{i,l}(\xBb[l])-\gd_{\xB^c}f^*_{i,l-1}(\xBb[l-1])\|\\
=&\sum_{i\in\Nc}\|\gd_{\xB^c}f^*_{i,l}(\xBb[l])-\gd_{\xB^c}f^*_{i,l-1}(\xBb[l])+\gd_{\xB^c}f^*_{i,l-1}(\xBb[l])-\gd_{\xB^c}f^*_{i,l-1}(\xBb[l-1])\|\\
\leq&I\Lmax_{l-1}\|\xBb[l]-\xBb[l-1]\|+\sum_{i\in\Nc}\|\gd_{\xB^c}f^*_{i,l}(\xBb[l])-\gd_{\xB^c}f^*_{i,l-1}(\xBb[l])\|.
\eal
Similar to the technique as in \cref{35} to \eqref{37}, by combining \cref{33}, \cref{34}, plus \cref{app_c:l1}, and then \cref{41}, we have
\begingroup
\allowdisplaybreaks[4]
\begin{align*}\label{42}
&\quad\frac{1}{\tau_{i,n}}\|\yB^{c,av}[n]-\1_I\otimes\rBb^{c,av}[n]\|\\
&\leq c_1\frac{\rho^n}{\taumin_n}+c_2\sum_{l=1}^{n-1}\frac{\rho^{n-l}}{\taumin_n}\|\Dl\rB^{c,av}[l,l-1]\|+c_3\frac{1}{\taumin_n}\|\Dl\rB^{c,av}[n,n-1]\|\\
&\leq c_1\frac{\rho^n}{\taumin_n}+c_2\sum_{l=1}^{n-1}\frac{\rho^{n-l}}{\taumin_n}\left(I\Lmax_{l-1}\left\|\xBb[l]-\xBb[l-1]\right\|+\sum_i\|\gd_{\xB^c}f^*_{i,l}(\xBb[l])-\gd_{\xB^c}f^*_{i,l-1}(\xBb[l])\|\right)\\
&\qquad\qquad+c_3\frac{1}{\taumin_n}\left(I\Lmax_{n-1}\left\|\xBb[n]-\xBb[n-1]\right\|+\sum_i\|\gd_{\xB^c}f^*_{i,n}(\xBb[n])-\gd_{\xB^c}f^*_{i,n-1}(\xBb[n])\|\right)\\
&=c_1\frac{\rho^n}{\taumin_n}+c_2\sum_{l=1}^{n-1}\frac{\rho^{n-l}\al[l-1]\Lmax_{l-1}}{\taumin_n}\left\|\left(\frac{1}{I_m}\1_{I_m}^T\otimes I_{d_m}\Dl\xB^{m,inx}[l-1]\right)_{m\in[M+1]}\right\|\\
&\qquad\qquad+c_3\frac{\al[n-1]\Lmax_{n-1}}{\taumin_n}\left\|\left(\frac{1}{I_m}\1_{I_m}^T\otimes I_{d_m}\Dl\xB^{m,inx}[n-1]\right)_{m\in[M+1]}\right\|\\
&\qquad\qquad+c_2\sum_{l=1}^{n-1}\frac{\rho^{n-l}}{\taumin_n}\sum_i\|\gd_{\xB^c}f^*_{i,l}(\xBb[l])-\gd_{\xB^c}f^*_{i,l-1}(\xBb[l])\|\\
&\qquad\qquad+c_3\frac{1}{\taumin_n}\sum_i\|\gd_{\xB^c}f^*_{i,n}(\xBb[n])-\gd_{\xB^c}f^*_{i,n-1}(\xBb[n])\|\\
&\leq c_1\frac{\rho^n}{\taumin_n}+c_4\sum_{l=1}^{n-1}\rho^{n-l}\frac{\al[l-1](\Lmax_{l-1})^2}{\taumin_n\taumin_{l-1}}+c_5\frac{\al[n-1](\Lmax_{n-1})^2}{\taumin_n\taumin_{n-1}}\\
&\qquad\qquad+c_2\sum_{l=1}^{n-1}\frac{\rho^{n-l}}{\taumin_n}\sum_i\|\gd_{\xB^c}f^*_{i,l}(\xBb[l])-\gd_{\xB^c}f^*_{i,l-1}(\xBb[l])\|\\
&\qquad\qquad+c_3\frac{1}{\taumin_n}\sum_i\|\gd_{\xB^c}f^*_{i,n}(\xBb[n])-\gd_{\xB^c}f^*_{i,n-1}(\xBb[n])\|\hspace{1.5in}(\text{\cref{p23} (a)})\\
&\xrightarrow{n\ra\infty}0\hspace{4.4in}(\text{(T1), (T2), and (T3)}).
\end{align*}
\endgroup
In the last equation, we also have $\lim_{n\ra\infty}\frac{\al[n](\Lmax_{n})^2}{(\taumin_{n})^2}=0$ and $\lim_{n\ra\infty}\frac{1}{\taumin_n}\|\gd f^*_{i,n}(\xBb[n])-\gd f^*_{i,n-1}(\xBb[n])\|=0$ implied by the conditions of \cref{t7}. For the second part of the claim, we can equivalently prove
\[
\sum_{n=1}^\infty\frac{\al[n]\Lmax_n}{\taumin_n}\|\yB^{c,av}[n]-\1_I\otimes\rBb^{c,av}[n]\|<\infty.
\]
This is true because
\bals
&\sum_{n=1}^\infty\frac{\al[n]\Lmax_n}{\taumin_n}\|\yB^{c,av}[n]-\1_I\otimes\rBb^{c,av}[n]\|\\
\leq&c_1\sum_{n=1}^\infty\rho^n\frac{\al[n]\Lmax_n}{\taumin_n}
+c_4\sum_{n=1}^\infty\al[n]\Lmax_n\sum_{l=1}^{n-1}\rho^{n-l}\frac{\al[l-1](\Lmax_{l-1})^2}{\taumin_n\taumin_{l-1}}
+c_5\sum_{n=1}^\infty\frac{\al[n]\al[n-1]\Lmax_n(\Lmax_{n-1})^2}{\taumin_n\taumin_{n-1}}\\
&\qquad\qquad+c_2\sum_{n=1}^\infty\al[n]\Lmax_n\sum_{l=1}^{n-1}\frac{\rho^{n-l}}{\taumin_n}\sum_i\|\gd_{\xB^c}f^*_{i,l}(\xBb[l])-\gd_{\xB^c}f^*_{i,l-1}(\xBb[l])\|\\
&\qquad\qquad+c_3\sum_{n=1}^\infty\frac{\al[n]\Lmax_n}{\taumin_n}\sum_i\|\gd_{\xB^c}f^*_{i,n}(\xBb[n])-\gd_{\xB^c}f^*_{i,n-1}(\xBb[n])\|<\infty.
\eals
All the terms are finite because of the following. The first term is due to (T4) -- after multiplying a going-to-zero $\al[n]$, the term remains to be bounded. The second term is due to (T5). The third term is in the condition of \cref{t7}. The fourth term is due to (T6). The last term is also in the condition of \cref{t7}.

\subsection{Proof of \cref{p23} (d)}\label{App:B-5}
Recall we have $\xBt_i[n]=\underset{\xB_i\in\Kc_\Si}{\arg\min}\es\tilde{U}_{i,n}(\xB_i;\xB_i[n],\piBt_i[n])$ and $\xBt^{av}_i[n]=\underset{\xB_i\in\Kc_\Si}{\arg\min}\es\tilde{U}_{i,n}(\xB_i;\xBb_i[n],\piBt^{av}_i[n])$, where
\[
\tilde{U}_{i,n}(\xB_i;\xB_i[n],\piBt_i[n])=\ft^*_{i,n}(\xB_i;\xB_i[n])+\sum_{k\in\Sc_i}\piBt^k_i[n]^T(\xB^k_i-\xB^k_i[n])+G(\xB^c).
\]
These along with (F1\textprime) and (A2) lead to the following:
\beq\label{43}
(\xBt^{av}_i-\xBt_i)^T\cdot\left[\gd_{\xB^\Si}\ft^*_{i,n}(\xBt_i;\xB_i)+\piBt_i+(\0^{\Si\sm\{c\}},\pa G(\xBt^c_i))\right]\geq0
\eeq
and
\beq\label{44}
(\xBt_i-\xBt^{av}_i)^T\cdot\left[\gd_{\xB^\Si}\ft^*_{i,n}(\xBt^{av}_i;\xBb_i)+\piBt^{av}_i+(\0^{\Si\sm\{c\}},\pa G(\xBt^{c,av}_i))\right]\geq0.
\eeq
As we did in \cref{40},
\begingroup
\allowdisplaybreaks
\bals
&\tau_{i,n}\|\xBt_i-\xBt^{av}_i\|^2&\\
\leq&(\xBt_i-\xBt^{av}_i)^T\cdot\left[\gd_{\xB^\Si}\ft^*_{i,n}(\xBt_i;\xB_i)-\gd_{\xB^\Si}\ft^*_{i,n}(\xBt^{av}_i;\xB_i)+(\0^{\Si\sm\{c\}},\pa G(\xBt^c_i)-\pa G(\xBt^{c,av}_i)\right]&(\text{(A2) and (F1\textprime)})\\
\leq&(\xBt_i-\xBt^{av}_i)^T\cdot\left[\gd_{\xB^\Si}\ft^*_{i,n}(\xBt^{av}_i;\xBb_i)-\gd_{\xB^\Si}\ft^*_{i,n}(\xBt^{av}_i;\xB_i)+\piBt^{av}_i-\piBt_i\right]&\text{(\cref{43} and \cref{44})}\\
\leq&\|\xBt_i-\xBt^{av}_i\|\cdot\left[L_{i,n}\|\xBb_i-\xB_i\|+\left\|(I_m(\yB^{m,av}_i-\yB^m_i))_{m\in\Si}-\gd_{\xB^\Si}f^*_{i,n}(\xBb_i)-\gd_{\xB^\Si}f^*_{i,n}(\xB_i)\right\|\right]&\text{((N1))}\\
\leq&\|\xBt_i-\xBt^{av}_i\|\cdot\left[2L_{i,n}\|\xBb_i-\xB_i\|+\sum_{m\in\Si}\left\|I_m(\yB^m_i-\yB^{m,av}_i)\right\|\right].&
\eals
\endgroup
Hence,
\bal\label{45}
&\quad\left[\sum_{m\in\Si}\left\|\xBt^m_i-\xBt^{m,av}_i\right\|^2\right]^{1/2}\\
&\leq\frac{2L_{i,n}}{\tau_{i,n}}\left(\sum_{m\in\Si}\|\xBb^m-\xB^m_i\|\right)+\sum_{m\in\Si}\frac{I_m}{\tau_{i,n}}\|\yB^m-\yB^{m,av}\|\\
&\leq\frac{2L_{i,n}}{\tau_{i,n}}\left(\sum_{m\in\Si}\|\xBb^m-\xB^m_i\|\right)+\sum_{m\in\Si}\frac{I_m}{\tau_{i,n}}\big(\|\1_{\Nc_m}\otimes(\rBb^m-\rBb^{m,av})\|\\
&\qquad\qquad+\|\yB^m-\yB^{m,av}-\1_{\Nc_m}\otimes(\rBb^m-\rBb^{m,av})\|\big).
\eal

Since $\|\xBt^m_i-\xBt^{m,av}_i\|$ is not larger than $\left[\sum_{m\in\Si}\left\|\xBt^m_i-\xBt^{m,av}_i\right\|^2\right]^{1/2}$, \cref{45} implies the former goes to zero as $n$ goes to infinity if we can show all terms in the RHS do so. The first term does go to zero as we showed in part (b) (combining \cref{37}, \cref{l21} (a), and the fact that $\lim_{n\ra\infty}\al[n]\left(\frac{\Lmax_n}{\taumin_n}\right)^2$). The following shows this property holds for the remaining two terms as well. As always we omit all time index $[n]$ from above as the context is clear enough.

We have\\
\resizebox{1\linewidth}{!}{\begin{minipage}{\linewidth}
\bal
\frac{1}{\tau_{i,n}}\|\1_\Nm\otimes(\rBb^m[n]-\rBb^{m,av}[n])\|
&\leq\frac{1}{\tau_{i,n}}\sum_{i\in\Nm}\left\|\gd_{\xB^m}f^*_{i,n}(\xB_i[n])-\gd_{\xB^m}f^*_{i,n}(\xBb^\Si[n])\right\|\\
&\leq\sum_{i\in\Nm}\frac{L_{i,n}}{\tau_{i,n}}\|\xB_i[n]-\xBb^\Si[n]\|\hspace{1.8in}\text{((N1))}\\
&\xrightarrow{n\ra\infty}0\hspace{2in}(\text{\cref{p23} (b)}),
\eal
\end{minipage}}
and
\begingroup
\allowdisplaybreaks[4]
\begin{align*}
&\quad\frac{1}{\tau_{i,n}}\|\yB^m-\yB^{m,av}-\1_\Nm\otimes(\rBb^m-\rBb^{m,av})\|&\\
&\leq c_1\frac{\rho^n}{\taumin_n}+c_2\sum_{l=1}^{n-1}\frac{\rho^{n-l}}{\taumin_n}\left\|\Dl\rB^m[l,l-1]-\Dl\rB^{m,av}[l,l-1]\right\|\\
&\qquad\qquad+c_3\frac{1}{\taumin_n}\left\|\Dl\rB^m[n,n-1]-\Dl\rB^{m,av}[n,n-1]\right\|\hspace{1.7in}\text{(\eqref{33}, \eqref{34}, and \eqref{42})}\\
&\leq c_1\frac{\rho^n}{\taumin_n}+c_4\sum_{l=1}^{n-1}\frac{\rho^{n-l}}{\taumin_n}\sum_{i\in\Nm}\left(\Lmax_l\|\xB^m_i[l]-\xBb^m[l]\|+\Lmax_{l-1}\|\xB^m_i[l-1]-\xBb^m[l-1]\|\right.\\
&\qquad\qquad+\left.\|\gd_{\xB^m}f^*_{i,l}(\xB_i[l-1])-\gd_{\xB^m}f^*_{i,l-1}(\xB_i[l-1])\|+\|\gd_{\xB^m}f^*_{i,l}(\xBb[l-1])-\gd_{\xB^m}f^*_{i,l-1}(\xBb[l-1])\|\right)\\
&\qquad\qquad+c_5\frac{1}{\taumin_n}\sum_{i\in\Nm}\left(\Lmax_n\|\xB^m_i[n]-\xBb^m[n]\|+\Lmax_{n-1}\|\xB^m_i[n-1]-\xBb^m[n-1]\|\right.\\
&\qquad\qquad+\|\gd_{\xB^m}f^*_{i,l}(\xB_i[n-1])-\gd_{\xB^m}f^*_{i,l-1}(\xB_i[n-1])\|\\
&\qquad\qquad+\left.\|\gd_{\xB^m}f^*_{i,l}(\xBb[n-1])-\gd_{\xB^m}f^*_{i,l-1}(\xBb[n-1])\|\right)\hspace{2.5in}\text{((N1))}\\
&\xrightarrow{n\ra\infty}0\hspace{2.1in}\text{((T1), (T2), \cref{p23} (b), \cref{l21} (a), and (T3))}.
\end{align*}
\endgroup
For the terms of the form $\frac{L_n}{\tau_n}\|\xB_\perp[n]\|$ to converge to zero, refer to \cref{37} and Assumption T1 and T2. In the second line, from \cref{33}, \cref{34}, and \cref{42} we know that $\|\yB^{m,av}-\1_\Nm\otimes\rBb^{m,av}\|$ can be represented as a sum of $\Dl\rB^{m,av}[l,l-1]$'s; using the same method $\|\yB^m-\1_\Nm\otimes\rBb^m\|$ can also be represented as a sum of $\Dl\rB^m[l,l-1]$'s, which we omit here. In the last inequality one can alternatively use \cref{31-3} to bound $\Dl\rB^m[l,l-1]$ and $\Dl\rB^{m,av}[l,l-1]$, which is simpler and sufficient for our purposes.

For the second part of the claim,
\begingroup
\allowdisplaybreaks[4]
\begin{align*}
&\quad\sum_{n=1}^\infty\al[n]\Lmax_n\left\|\xBt^m_i[n]-\xBt^{m,av}_i[n]\right\|\\
&\leq c_6\sum_{n=1}^\infty\frac{\al[n](\Lmax_n)^2}{\taumin_n}\|\xB^\Si_\perp[n]\|+c_1\sum_{n=1}^\infty\frac{\rho^n\al[n]\Lmax_n}{\taumin_n}\\
&\quad+c_4\sum_{n=1}^\infty\sum_{l=1}^{n-1}\frac{\rho^{n-l}\al[n]\Lmax_n}{\taumin_n}\sum_{i\in\Nm}\big(\Lmax_l\|\xB^m_i[l]-\xBb^m[l]\|+\Lmax_{l-1}\|\xB^m_i[l-1]-\xBb^m[l-1]\|\\
&\qquad+\|\gd_{\xB^m}f^*_{i,l}(\xB_i[l-1])-\gd_{\xB^m}f^*_{i,l-1}(\xB_i[l-1])\|+\|\gd_{\xB^m}f^*_{i,l}(\xBb[l-1])-\gd_{\xB^m}f^*_{i,l-1}(\xBb[l-1])\|\big)\\
&\quad+c_5\sum_{n=1}^\infty\frac{\al[n]\Lmax_n}{\taumin_n}\sum_{i\in\Nm}\big(\Lmax_n\|\xB^m_i[n]-\xBb^m[n]\|+\Lmax_{n-1}\|\xB^m_i[n-1]-\xBb^m[n-1]\|\\
&\qquad+\|\gd_{\xB^m}f^*_{i,l}(\xB_i[n-1])-\gd_{\xB^m}f^*_{i,l-1}(\xB_i[n-1])\|+\|\gd_{\xB^m}f^*_{i,l}(\xBb[n-1])-\gd_{\xB^m}f^*_{i,l-1}(\xBb[n-1])\|\big)\\
&<\infty.
\end{align*}
\endgroup
For the first term, use \cref{37}, (T4), and (T5). Second term is finite due to (T4) with additional $\al[n]$. The terms in the forth line are just like the first term. The terms in the fifth line converge by the condition of \cref{t7}. The terms in the second line are of the type $\sum_n\frac{\al[n]L_n}{\tau_n}\sum_l\rho^{n-l}L_l\|\xB_\perp[l]\|$, from \cref{37} and (T4) one can show that $\sum_n\frac{\al[n]L^2_n}{\tau_n}\|\xB_\perp[n]\|$ converges, hence the convergence of the terms by applying second part of \cref{l24}. The terms in the third line converge because of (T6).

\subsection{Proof of \cref{t7}}\label{af}
Denote $F^*_n=\sum_{i\in\Nc}f^*_{i,n}$. By descent Lemma,
\begingroup
\allowdisplaybreaks[4]
\begin{align*}\label{48}
&\quad F^*_n(\xBb[n+1])\\
&\leq F^*_n(\xBb[n])+\gd F^*_n(\xBb[n])^T(\xBb[n+1]-\xBb[n])+\frac{\Lmax_n}{2}\|\xBb[n+1]-\xBb[n]\|^2\\
&=F^*_n(\xBb[n])+\sum_m\left[\gd_{\xB^m}F^*_n(\xBb[n])^T(\xBb^m[n+1]-\xBb^m[n])+\frac{\Lmax_n}{2}\|\xBb^m[n+1]-\xBb^m[n]\|^2\right]\\
&=F^*_n(\xBb[n])+\sum_m\left[\frac{\al[n]}{I_m}\gd_{\xB^m}F^*_n(\xBb[n])^T\sum_{i\in\Nc_m}(\xB^{m,inx}_i[n]-\xBb^m_i[n])+\frac{\Lmax_n}{2}\|\xBb^m[n+1]-\xBb^m[n]\|^2\right]\\
&\leq F^*_n(\xBb[n])+\sum_m\Bigg[\frac{\al[n]}{I_m}\gd_{\xB^m}F^*_n(\xBb[n])^T\sum_{i\in\Nc_m}\bigg[\left(\xBh^m_{i,n}(\xBb[n])-\xBb^m_i[n]\right)+\left(\xBt^{m,av}_i[n]-\xBh^m_{i,n}(\xBb[n])\right)\\
&\qquad\qquad+\left(\xBt^m_i[n]-\xBt^{m,av}_i[n]\right)+\left(\xB^{m,inx}_i[n]-\xBt^m_i[n]\right)\bigg]+\frac{\Lmax_n}{2}\|\xBb^m[n+1]-\xBb^m[n]\|^2\Bigg].\\
\end{align*}
\endgroup
By the convexity of $G$ (A2),
\bal\label{49}
G(\xBb^c[n+1])&\leq(1-\al[n])G(\xBb^c[n])+\al[n]G\left(\frac{1}{I}\sum_{i=1}^I\xB^{c,inx}_i[n]\right)\\
&\leq(1-\al[n])G(\xBb^c[n])+\frac{\al[n]}{I}\sum_{i=1}^IG(\xB^{c,inx}_i[n]).
\eal
Then using \cref{p19} (a) and the fact that $G$ has bounded subgradients,
\bal\label{50}
&\quad\sum_m\frac{\al[n]}{I_m}\gd_{\xB^m}F^*_n(\xBb[n])^T\sum_{i\in\Nc_m}\left(\xBh^m_{i,n}(\xBb[n])-\xBb^m_i[n]\right)\\
&\leq-\taumin_n\al[n]\diamondsuit[n]+\al[n]\left[G(\xBb^c[n])-\frac{1}{I}\sum_{i=1}^IG\left(\xBh^c_{i,n}(\xBb[n])\right)\right]\hspace{1.5in}\text{(\cref{p19} (a))}\\
&\leq-\taumin_n\al[n]\diamondsuit[n]+G(\xBb^c[n])-G(\xBb^c[n+1])+\frac{\al[n]}{I}\sum_{i=1}^I\left\|G(\xB^{c,inx}_i[n])-G\left(\xBh^c_{i,n}(\xBb[n])\right)\right\|\hspace{0.3in}\text{(by \cref{49})}\\
&\leq-\taumin_n\al[n]\diamondsuit[n]+G(\xBb^c[n])-G(\xBb^c[n+1])+\frac{L_G\al[n]}{I}\sum_{i=1}^I\left\|\xB^{c,inx}_i[n]-\xBh^c_{i,n}(\xBb[n])\right\|\hspace{0.85in}\text{((A2))}\\
\eal
where $\diamondsuit[n]$ stands for the expression $\sum_m\sum_{i\in\Nc_m}\left\|\xBh^m_i(\xBb[n])-\xBb^m_i[n]\right\|^2$. Combining \cref{48}, \cref{50} and (N1) with Cauchy-Schwarz inequality as well as triangle inequality, we get
\bal\label{51}
&F^*_n(\xBb[n+1])
\leq F^*_n(\xBb[n])+G(\xBb^c[n])-G(\xBb^c[n+1])+\frac{\al[n]L_G}{I}\sum_{i=1}^I\left\|\xB^{c,inx}_i[n]-\xBh^c_{i,n}(\xBb^c[n])\right\|\\
&\qquad+\sum_m\left[\frac{\al[n]\Lmax_n}{I_m}\sum_{i\in\Nc_m}\left(\left\|\xBt^{m,av}_i[n]-\xBh^m_{i,n}(\xBb[n])\right\|+\left\|\xBt^m_i[n]-\xBt^{m,av}_i[n]\right\|+\ep^m_i[n]\right)\right]\\
&\qquad+\sum_m\frac{\Lmax_n}{2}\|\xBb^m[n+1]-\xBb^m[n]\|^2-\taumin_n\al[n]\diamondsuit[n].
\eal
From the triangle inequality, \cref{p23} (a) and \cref{f24} (e),
\bals
\left\|\xB^{c,inx}_i[n]-\xBh^c_{i,n}(\xBb^c[n])\right\|
&\leq\left\|\xB^{c,inx}_i[n]-\xBt^c_i[n]\right\|+\left\|\xBt^c_i[n]-\xBt^{c,av}_i[n]\right\|+\left\|\xBt^{c,av}_i[n]-\xBh^c_{i,n}(\xBb^c[n])\right\|,\\
\|\xBb^m[n+1]-\xBb^m[n]\|^2&\leq\left(\frac{c^m\al[n]\Lmax_n}{I_m\taumin_n}\right)^2\quad\forall\es m.
\eals
Substitute these expression back into \eqref{51} and rearrange the terms to get
\bal\label{51-1}
&U^*_{n+1}(\xBb[n+1])
\leq U^*_n(\xBb[n])-\taumin_n\al[n]\diamondsuit[n]+c_1(\Lmax_n)^3\left(\frac{\al[n]}{\taumin_n}\right)^2\\
&\qquad+F^*_{n+1}(\xBb[n+1])-F^*_n(\xBb[n+1])\\
&\qquad+c_2\sum_m\left[\al[n]\Lmax_n\sum_{i\in\Nc_m}\left(\left\|\xBt^{m,av}_i[n]-\xBh^m_{i,n}(\xBb[n])\right\|+\left\|\xBt^m_i[n]-\xBt^{m,av}_i[n]\right\|+\ep^m_i[n]\right)\right].
\eal

We now exploit \cref{l22} with $Y[n]=U^*_n(\xBb[n])$, $X[n]=\taumin_n\al[n]\diamondsuit[n]$ and
\bals
&Z[n]=c_1(\Lmax_n)^3\left(\frac{\al[n]}{\taumin_n}\right)^2+F^*_{n+1}(\xBb[n+1])-F^*_n(\xBb[n+1])\\
&\qquad+c_2\sum_m\left[\al[n]\Lmax_n\sum_{i\in\Nc_m}\left(\left\|\xBt^{m,av}_i[n]-\xBh^m_{i,n}(\xBb[n])\right\|+\left\|\xBt^m_i[n]-\xBt^{m,av}_i[n]\right\|+\ep^m_i[n]\right)\right].
\eals
Since $U(\xBb[n])$ is coercive ((A3)), $Y[n]\not\ra-\infty$; on the other hand, from \cref{p23} (c), (d), and the assumption of the Theorem, $\sum_{n=1}^\infty Z[n]<\infty$. Thus, by \cref{l22} $\{U^*_n(\xBb[n])\}$ converges to a finite value and $\sum_{n=1}^\infty\taumin_n\al[n]\diamondsuit[n]$ converges as well, which means
\[
\sum_{n=1}^\infty\taumin_n\al[n]\left\|\xBh^m_{i,n}(\xBb[n])-\xBb^m[n]\right\|^2<\infty\quad\forall\es i\in\Nc_m,\es\forall\es m.
\]
This in turn implies
\[
\lim_{n\ra\infty}\left\|\xBh^m_{i,n}(\xBb[n])-\xBb^m[n]\right\|=0\quad\forall\es i\in\Nc_m,\es\forall\es m.
\]
At this point the localization is no longer an issue, and we will use the generalized definition of $\xBh_{i,n}(\xBb[n])\in\Kc$ so that we have $\lim_{n\ra\infty}\left\|\xBh_{i,n}(\xBb[n])-\xBb[n]\right\|=0$ for all $i\in\Nc$.

Since $\{\xBb[n]\}$ is bounded following from the convergence of $\{U^*_n(\xBb[n])\}$, there exists a limit point $\xBb^\infty\in\Kc$ of the set. We assume $\xBb[n]\ra\xBb^\infty$. If this is not the case, then one can find a subsequence $\xBb[n_k]$ indexed by $k$ such that $\xBb[n_k]\ra\xBb^\infty$ as $k\ra\infty$. We consider a partition of three cases: (1) bounded gradient ($\exists\es B\text{ s.t. }\|\gd f_i(\xB)\|<B\es\forall\es i,\xB$), (2) unbounded gradient and interior point ($\xBb^\infty\in int(\Kc)$), and (3) unbounded gradient and boundary point ($\xBb^\infty\in bd(\Kc)$).

\noindent\underline{(1) bounded gradient}: Recall the map defined in \cref{p19}
\[
\xBh_{i,n}(\xBt)=\underset{\xB}{\arg\min}\es\ft^*_{i,n}(\xB;\xBt)+\piB_i(\xBt)^T(\xB-\xBt)+G(\xB)\triangleq\underset{\xB}{\arg\min}\es\tilde{U}_{i,n}(\xB;\xBt).
\]
This map is converging to the following map
\beq\label{51-2}
\xBh_{i}(\xBt)=\underset{\xB}{\arg\min}\es\ft_i(\xB;\xBt)+\piB_i(\xBt)^T(\xB-\xBt)+G(\xB)\triangleq\underset{\xB}{\arg\min}\es\tilde{U}_{i}(\xB;\xBt),
\eeq
which might be multi-valued since we do not require $\ft_i$ to be strongly convex. The latter map is well-defined everywhere only with bounded gradient. Otherwise $\piB_i$ could be infinite; moreover, when $\gd f_i(\xB)=\infty$ and $\xB\in int(\Kc)$, it is not possible to achieve $\gd\ft_i(\xB;\xB)=\gd f_i(\xB)$, $\ft_i$ being defined everywhere and being convex simultaneously. Thus, the analysis for this case does not work for the other two cases.

Now consider the two maps evaluated at $\xBb[n]$ and $\xBb^\infty$ respectively, $\xBh_{i,n}(\xBb[n])$, the minimizer of $\tilde{U}_{i,n}(\bullet;\xBb[n])\triangleq\psi_n$, and $\xBh_{i}(\xBb^\infty)$, the set of minimizers of $\tilde{U}_{i}(\bullet;\xBb^\infty)\triangleq\psi$. We have the following two properties.
\begin{itemize}
\item
$\{\psi_n\}$ is eventually level-bounded, i.e. $\forall\es\al\in\R$, $\buU_{n\in N,N\in\Nc_\infty}lev_{\leq\al}\psi_n$ is bounded. Refer to \cite{Rockafellar_VarAna_1998}, p. 8, p. 109, and p. 123 for the definitions of the notations. This is ensured by Assumption F3, i.e. either $\ft_i(\bullet;\xB)$ is coercive $\forall\es\xB,i$ or $G(\bullet)$ is coercive.
\item
$\psi_n\overset{e}{\ra}\psi$, i.e. $\psi_n$ epi-converges to $\psi$. See \cite{Rockafellar_VarAna_1998}, p. 241 for the definition. This is due to $\{\tilde{U}_{i,n}\}$ and $\tilde{U}_i$ being continuous and $\lim_{n\ra\infty}\tilde{U}_{i,n}=\tilde{U}_i$, then by \cite{Rockafellar_VarAna_1998} Theorem 7.2, p. 241 we have $\psi_n\overset{e}{\ra}\psi$.
\end{itemize}

By \cite{Rockafellar_VarAna_1998} Theorem 7.33, p. 266, with these two properties, we then have
\[
\xBb^\infty=\lim_{n\ra\infty}\xBh_{i,n}(\xBb[n])=\underset{n\ra\infty}{\lim\sup}(\arg\min\psi_n)\rU\arg\min\psi=\xBh_{i}(\xBb^\infty).
\]
In \cite{Scutari_NEXT_2016} Proposition 5(b) says that the fixed point of $\xBh_{i}$ is also the stationary solution of the original optimization problem, which is proved in \cite{Scutari_Parallel_2015} Proposition 8(b). Things change slightly here as the minimizer of $\xBh_{i}$ may not be unique. However, in the proof they did not exploit any strong convexity property. Hence, we still have $\xBb^\infty$ being a stationary solution.

\noindent\underline{(2) unbounded gradient and interior point}: Effectively we want to show
\beq\label{52}
\gd F(\xBb^\infty)^T(\zB-\xBb^\infty)+G(\zB)-G(\xBb^\infty)\geq0\quad\forall\es\zB\in\Kc,
\eeq
but we can no longer argue anything with $\xBh_i$. Only for the following we will write $\xBb_n$ instead of $\xBb[n]$ for simplicity. From the optimality condition of $\xBh_{i,n}(\xBb_n)$, we have that for all $\zB\in\Kc$,
\bal\label{53}
0&\leq\left[\gd\ft^*_{i,n}(\xBh_{i,n}(\xBb_n);\xBb_n)+\sum_{j\neq i}\gd f^*_{i,n}(\xBb_n)\right]^T\left(\zB-\xBh_{i,n}(\xBb_n)\right)+G(z)-G\left(\xBh_{i,n}(\xBb_n)\right)\\
&=\left[\gd\ft^*_{i,n}(\xBb_n;\xBb_n)+\sum_{j\neq i}\gd f^*_{i,n}(\xBb_n)\right]^T\left(\zB-\xBh_{i,n}(\xBb_n)\right)+G(z)-G\left(\xBh_{i,n}(\xBb_n)\right)\\
&\qquad+\left[\gd\ft^*_{i,n}(\xBh_{i,n}(\xBb_n);\xBb_n)-\gd\ft^*_{i,n}(\xBb_n;\xBb_n)\right]^T\left(\zB-\xBh_{i,n}(\xBb_n)\right),
\eal
where $\gd\ft^*_{i,n}(\xBb_n;\xBb_n)+\sum_{j\neq i}\gd f^*_{i,n}(\xBb_n)$ is just $\gd F^*_n(\xBb_n)$. The terms in the second bracket are bounded as follows
\bals
&\left\|\gd\ft^*_{i,n}(\xBh_{i,n}(\xBb_n);\xBb_n)-\gd\ft^*_{i,n}(\xBb_n;\xBb_n)\right\|\\
\leq&\left\|\gd\ft^*_{i,n}(\xBh_{i,n}(\xBb_n);\xBb_n)-\gd\ft^*_{i,n}(\xBh_{i,n}(\xBb_n);\xBh_{i,n}(\xBb_n))\right\|+\left\|\gd\ft^*_{i,n}(\xBh_{i,n}(\xBb_n);\xBh_{i,n}(\xBb_n))-\gd\ft^*_{i,n}(\xBb_n;\xBb_n)\right\|\\
=&\left\|\gd\ft^*_{i,n}(\xBh_{i,n}(\xBb_n);\xBb_n)-\gd\ft^*_{i,n}(\xBh_{i,n}(\xBb_n);\xBh_{i,n}(\xBb_n))\right\|+\left\|\gd f^*_{i,n}(\xBh_{i,n}(\xBb_n))-\gd f^*_{i,n}(\xBb_n)\right\|\\
\leq&L_{i,n}\left\|\xBh_{i,n}(\xBb_n)-\xBb_n\right\|+L_{i,n}\left\|\xBh_{i,n}(\xBb_n)-\xBb_n\right\|
\leq2\Lmax_n\left\|\xBh_{i,n}(\xBb_n)-\xBb_n\right\|,
\eals
where the second inequality is due to the Lipschitz continuities of $\gd\ft^*_{i,n}(\xB;\bullet)$ and $\gd f^*_{i,n}(\bullet)$. Since we assume $\sum_n(\Lmax_n)^3\left(\frac{\al[n]}{\taumin_n}\right)^2<\infty$ in the condition and get $\sum_n\al[n]\taumin_n\|\xBh_{i,n}(\xBb_n)-\xBb_n\|^2<\infty$, it must be that $\|\xBh_{i,n}(\xBb_n)-\xBb_n\|^2=O\left(\al[n]\frac{(\Lmax_n)^3}{(\taumin_n)^3}\right)$. Hence, with the conditions of $\lim_{n\ra\infty}\al[n]\frac{(\Lmax_n)^5}{(\taumin_n)^3}=0$ and $\lim_{n\ra\infty}\gd F^*_n=\gd F$, taking $n\ra\infty$ in \cref{53} yields exactly \cref{52}. It is evident that $\xBb^\infty$ must be a point such that $\gd F(\xBb^\infty)<\infty$, because if not so $\xBb^\infty$ is an interior point and there must exist one descent direction.

\noindent\underline{(3) unbounded gradient and boundary point}: We can consider two subcases.
\vspace{3pt}
\begin{itemize}
\item
$\gd F(\xBb^\infty)<\infty$: we can use the same argument in case (2) to show that $\xBb^\infty$ is a stationary solution. If we have $\|\gd f_i(\xBb^\infty)\|<B\es\forall\es i$, we can also use the same argument in case (1) confined to a small neighborhood of $\xBb^\infty$.
\item
$\gd F(\xBb^\infty)=\infty$: the definition of stationary solution fails here and we can only turn to the definition of local minimum. However, both \emph{NEXT} and our algorithm can numerically converge to a point which is not a local minimum.
\end{itemize}

\section{Pseudo Code of the Algorithms}\label{app:c}
In this appendix we give the complete pseudo code of algorithms in \cref{sec:app} for reader's reference. All $\nb$'s represent $n+1$ for compression.

\subsection{LXGP-RM}
The LXGP-RM algorithm is given as follows:
\begin{algorithm}[H]
\caption{LXGP-RM}
\label{a13}
{\fontsize{8}{8}
\begin{algorithmic}[1]
\State{Initialization: $\forall\es b$, $p^b_{BK}[0]=\ep$, $x_{bI(b)K}[0]=0$, $r^b_{BK}[0]=0$, $\pit^b_{BK}[0]=0$, $n=0$}
\While{$p^b_{BK}[n]$ and $x_{bI(b)K}[n]$ do not satisfy the termination criterion}
\State{$n\la n+1$}
\State{$\al[n]=\frac{\al_0}{(n+1)^\be}$}
%\State{\textit{Local SCA optimization}}
\State{$(\pt^b_{BK}[n],\xt_{bI(b)K}[n])=\underset{p^b_{BK},x_{bI(b)K}}{\arg\min}\ft_b(p^b_{BK},x_{bI(b)K};p^b_{BK}[n],x_{bI(b)K}[n])+\pit^b_{BK}[n]\cdot(p^b_{BK}-p^b_{BK}[n])$}
\State{$q^b_{BK}[n]=p^b_{BK}[n]+\al[n](\pt^b_{BK}[n]-p^b_{BK}[n])$}
%\State{\textit{Consensus update}}
\State{$x_{bI(b)K}[\nb]=x_{bI(b)K}[n]+\al[n](\xt_{bI(b)K}[n]-x_{bI(b)K}[n])$}
\State{$p^b_{BK}[\nb]=\sum_{b'\in Nb(b)}W_{bb'}q^{b'}_{BK}[n]$}
\State{$r^b_{BK}[\nb]=\sum_{b'\in Nb(b)}W_{bb'}r^{b'}_{BK}[n]+\left[\gd_{p^b_{BK}}f_b(p^b_{BK}[\nb],x_{bI(b)K}[\nb])-\gd_{p^b_{BK}}f_b(p^b_{BK}[n],x_{bI(b)K}[n])\right]$}
\State{$\pit^b_{BK}[\nb]=|B|\1_{BK}\circ r^b_{BK}[\nb]-\gd_{p^b_{BK}}f_b(p^b_{BK}[\nb],x_{bI(b)K}[\nb])$}
\EndWhile
\Ensure{$p^b_{bK}[n]$ and $x_{bI(b)K}[n]$}
\end{algorithmic}
}
\end{algorithm}

In the algorithm, $p^b_{BK}$ is the collection of the variables $p^b_{b'k}\es\forall\es b'\in B,k\in K$, and so are the other quantities $x_{bI(b)K}$, $q^b_{BK}$, etc. The collection of variables $p^b_{BK}$ can be viewed as a $B\times K$ matrix, or a vector of $BK$ dimensions. $\1_{BK}$ is a $B\times K$ matrix (or vector) consisting of all 1's. $\circ$ denotes the element-wise product, also known as Hadamard product or Schur product.

\subsection{LXLP-RM}
Using the same notations as in LXGP-RM, the LXLP-RM algorithm is given as follows:
\begin{algorithm}[H]
\caption{LXLP-RM}
\label{a14}
{\fontsize{8}{8}
%\resizebox{.9\hsize}{!}{}
\begin{algorithmic}[1]
\State{Initialization: $\forall\es b$, $p^b_{Nb(b)K}[0]=\ep$, $x_{bI(b)K}[0]=0$, $r^b_{Nb(b)K}[0]=0$, $\pit^b_{Nb(b)K}[0]=0$, $n=0$}
\While{$p^b_{Nb(b)K}[n]$ and $x_{bI(b)K}[n]$ do not satisfy the termination criterion}
\State{$n\la n+1$}
\State{$\al[n]=\frac{\al_0}{(n+1)^\be}$}
%\State{\textit{Local SCA optimization}}
\State{$(\pt^b_{Nb(b)K}[n],\xt_{bI(b)K}[n])=\underset{p^b_{Nb(b)K},x_{bI(b)K}}{\arg\min}\ft_b(p^b_{Nb(b)K},x_{bI(b)K};p^b_{Nb(b)K}[n],x_{bI(b)K}[n])$}
\Statex{\hspace{70pt}$+\pit^b_{Nb(b)K}[n]\cdot(p^b_{Nb(b)K}-p^b_{Nb(b)K}[n])$}
\State{$q^b_{Nb(b)K}[n]=p^b_{Nb(b)K}[n]+\al[n](\pt^b_{Nb(b)K}[n]-p^b_{Nb(b)K}[n])$}
%\State{\textit{Consensus update}}
\State{$x_{bI(b)K}[\nb]=x_{bI(b)K}[n]+\al[n](\xt_{bI(b)K}[n]-x_{bI(b)K}[n])$}
\State{$p^b_{Nb(b)K}[\nb]=\sum_{b'\in Nb(b)}W_{bb'}(Nb(b))q^{b'}_{Nb(b)K}[n]$}
\State{$r^b_{Nb(b)K}[\nb]=\sum_{b'\in Nb(b)}W_{bb'}(Nb(b))r^{b'}_{Nb(b)K}[n]$}
\Statex{\hspace{10pt}$+\left[\gd_{p^b_{Nb(b)K}}f_b(p^b_{Nb(b)K}[\nb],x_{bI(b)K}[\nb])-\gd_{p^b_{Nb(b)K}}f_b(p^b_{Nb(b)K}[n],x_{bI(b)K}[n])\right]$}
\State{$\pit^b_{Nb(b)K}[\nb]=[(d_{Nb(b)}+\1_{Nb(b)})\1_{K}^T]\circ r^b_{Nb(b)K}[\nb]-\gd_{p^b_{Nb(b)K}}f_b(p^b_{Nb(b)K}[\nb],x_{bI(b)K}[\nb])$}
\EndWhile
\Ensure{$p^b_{bK}[n]$ and $x_{bI(b)K}[n]$}
\end{algorithmic}
}
\end{algorithm}

In line 10, the notation stands for $p^b_{b''K}[n+1]$ being updated as $\sum_{b'\in Nb(b)}W_{bb'}(b'')q^{b'}_{b''K}[n]$ for all $b''\in Nb(b)$, and so do line 11 and 12. In line 12, $d_{b''}$ means the degree of $b''$, i.e. $d_{b''}=|N(b'')|$. If considering $\pit^b_{Nb(b)K}$ to be a $Nb(b)\times K$ matrix, then $(d_{Nb(b)}+\1_{Nb(b)})\1_{K}^T$ consists of $Nb(b)$ rows; each row has $|K|$ elements, and every element in the $b''$-th row is $d_{b''}+1=|Nb(b'')|$.

\subsection{GXGP-CM}
the GXGP-CM algorithm is given as follows:
\vspace{-5pt}
\begin{algorithm}[H]
\caption{GXGP-CM}
\label{a15}
{\fontsize{8}{8}
\begin{algorithmic}[1]
\State{Initialization: $\forall\es b$, $p^b_{BK}[0]=\ep$, $x^b_{BI(B)K}[0]=0$, $r^b_{BK}[0]=0$, $\pit^b_{BK}[0]=0$, $y^b_{BI(B)K}[0]=0$, $\taut^b_{BI(B)K}[0]=0$, $n=0$}
\While{$p^b_{BK}[n]$ and $x^b_{BI(B)K}[n]$ do not satisfy the termination criterion}
\State{$n\la n+1$}
\State{$\al[n]=\frac{\al_0}{(n+1)^\be}$}
%\State{\textit{Local SCA optimization}}
\State{$(\pt^b_{BK}[n],\xt^b_{BI(B)K}[n])=\underset{p^b_{BK},x^b_{BI(B)K}}{\arg\min}\ft_b(p^b_{BK},x^b_{BI(B)K};p^b_{BK}[n],x^b_{BI(B)K}[n])$}
\Statex{$+\pit^b_{BK}[n]\cdot(p^b_{BK}-p^b_{BK}[n])+\taut^b_{BI(B)K}[n]\cdot(x^b_{BI(B)K}-x^b_{BI(B)K}[n])+G(p^b_{BK},x^b_{BI(B)K})$}
\State{$q^b_{BK}[n]=p^b_{BK}[n]+\al[n](\pt^b_{BK}[n]-p^b_{BK}[n])$}
\State{$z^b_{BI(B)K}[\nb]=x^b_{BI(B)K}[n]+\al[n](\xt^b_{BI(B)K}[n]-x^b_{BI(B)K}[n])$}
%\State{\textit{Consensus update}}
\State{$p^b_{BK}[\nb]=\sum_{b'\in Nb(b)}W_{bb'}q^{b'}_{BK}[n]$}
\State{$x^b_{BI(B)K}[\nb]=\sum_{b'\in Nb(b)}W_{bb'}z^{b'}_{BI(B)K}[n]$}
\State{$r^b_{BK}[\nb]=\sum_{b'\in Nb(b)}W_{bb'}r^{b'}_{BK}[n]+\left[\gd_{p^b_{BK}}f_b(p^b_{BK}[n+1],x^b_{BI(B)K}[\nb])\-\gd_{p^b_{BK}}f_b(p^b_{BK}[n],x^b_{BI(B)K}[n])\right]$}
\State{$y^b_{BI(B)K}[\nb]=\sum_{b'\in Nb(b)}W_{bb'}y^{b'}_{BI(B)K}[n]+\left[\gd_{x^b_{BI(B)K}}f_b(p^b_{BK}[\nb],x^b_{BI(B)K}[\nb])-\gd_{x^b_{BI(B)K}}f_b(p^b_{BK}[n],x^b_{BI(B)K}[n])\right]$}
\State{$\pit^b_{BK}[\nb]=|B|\1_{BK}\circ r^b_{BK}[\nb]-\gd_{p^b_{BK}}f_b(p^b_{BK}[\nb],x_{bI(b)K}[\nb])$}
\State{$\taut^b_{BI(B)K}[\nb]=|B|\1_{BI(B)K}\circ y^b_{BI(B)K}[\nb]-\gd_{x^b_{BI(B)K}}f_b(p^b_{BK}[\nb],x^b_{BI(B)K}[\nb])$}
\EndWhile
\Ensure{$p^b_{bK}[n]$ and $x^b_{bI(b)K}[n]$}
\end{algorithmic}
}
\end{algorithm}

\section{A Stochastic Approximation Viewpoint}\label{sec:sa}
In this appendix, we review \emph{NEXT} from the viewpoint of stochastic approximation. We provide an alternative proof of Theorem 4 in \cite{Scutari_NEXT_2016} using results from stochastic approximation in \cref{sec:sa-1}. As we will see, \emph{NEXT} can be seen as a two time-scale process, with the faster $\yB$ tracking the total gradient, the slower $\xB$ tracking the fixed point iteration of $\xBh(\bullet)$, and a repeated projection onto the \emph{consensus plane}. From this viewpoint and the fact that the local optimization in \cref{8-2} (in \emph{NEXT} version, see Equation (8), \cite{Scutari_NEXT_2016}) can be solved by the projected gradient descent method described in \cref{sec:sa-2}, we can interleave each ``descent" as another time-scale of the algorithm. We relate the result with another distributed non-convex optimization method proposed in \cite{Bianchi_DistGrad_2013}.
\subsection{Alternative Proof of NEXT}\label{sec:sa-1}
Substituting the definitions of $\zB$ and $\piBt$ into \emph{Inexact NEXT} (Algorithm 2, \cite{Scutari_NEXT_2016}), we can rewrite each iteration of the algorithm in two steps:
\begin{comment}
\begin{align}
%&\yB_i[n]=\sum_{j=1}^Iw_{ij}[n]\yB_j[n-1]+\left[\gd f_i(\xB_i[n])-\gd f_i(\xB_i[n-1])\right],\label{sa1-1}\\
%&\xB_i[n+1]=\sum_{j=1}^Iw_{ij}[n]\left[\xB_j[n]+\al[n]\left(\xBt_j(\xB_j[n],\yB_j[n])-\xB_j[n]+\eB_j[n]\right)\right],\label{sa1-2}
\end{align}
\end{comment}
\begin{align}
&\yB_i[n]=\sum_{j=1}^Iw_{ij}\yB_j[n-1]+\left[\gd f_i(\xB_i[n]-\gd f_i(\xB_i[n-1]))\right],\label{sa1-1}\\
&\xB_i[n+1]=\sum_{j=1}^Iw_{ij}\left[\xB_j[n]+\al[n]\left(\xBt_j(\xB_j[n],\yB_j[n])-\xB_j[n]+\eB_j[n]\right)\right],\label{sa1-2}
\end{align}
where $\|\eB_i[n]\|\leq\ep_i[n]\es\forall\es i$, and $\xBt_i(\xB_i[n],\yB_i[n])$ is given by (8) in \cite{Scutari_NEXT_2016} with $\piBt_i$ substituted by $\yB_i$ using (S.3) (c) of Algorithm 1 in \cite{Scutari_NEXT_2016}. By letting $\uB_i[n]=\yB_i[n]-\gd f_i(\xB_i[n])$, \cref{sa1-1} can be rewritten as
\begin{comment}
\bal\label{sa2}
\uB_i[n]&=\sum_{j=1}^Iw_{ij}[n]\left[\uB_j[n-1]+\gd f_j(\xB_j[n-1])\right]-\gd f_i(\xB_i[n-1])\\
&=\uB_i[n-1]+\be[n]\left\{\sum_{j=1}^Iw_{ij}[n]\left[\uB_j[n-1]+\gd f_j(\xB_j[n-1])\right]\right.\\
&\left.-\left[\uB_i[n-1]+\gd f_i(\xB_i[n-1])\right]\right\},
\eal
\end{comment}
\bal\label{sa2}
\uB_i[n]&=\sum_{j=1}^Iw_{ij}\left[\uB_j[n-1]+\gd f_j(\xB_j[n-1])\right]-\gd f_i(\xB_i[n-1])\\
&=\uB_i[n-1]+\be[n]\left\{\sum_{j=1}^Iw_{ij}\left[\uB_j[n-1]+\gd f_j(\xB_j[n-1])\right]\right.\\
&\quad\left.-\left[\uB_i[n-1]+\gd f_i(\xB_i[n-1])\right]\right\},
\eal
where $\be[n]=1$. It is evident that $\al[n]=o(\be[n])$. As a result, \cref{sa1-2} and \cref{sa2} together form a two time-scale stochastic approximation algorithm \cite{Borkar_TwoTime_1997}, where $\uB_i$ or $\yB_i$ is on a faster, natural time-scale with constant step sizes, and $\xB_i$ goes on a slower, algorithmic time-scale with shrinking step sizes.

To analyze this process, we first begin with the fact that the fast variable $\uB$ or $\yB$ views the slow variable $\xB$ as quasi-static, i.e. we can see $\xB$ as constant in \cref{sa2}. Denote $\uB$ as the ensemble of $\uB_i$'s, i.e. $\uB=\bms\uB_1^T&\cdots&\uB_I^T\bme^T$, $\gd f$ as the ensemble of $\gd f_i$'s, and also $\xB$, $\yB$, etc. Then the iterate of $\uB$ will asymptotically track the following ordinary differential equation (ODE)
%\begin{equation}\label{sa3}
%\dot{\uB}(t)=[W(t)\otimes\IB_d-\IB_{dI}][\uB(t)-\gd f(\xB)],\es\uB(0)=0,
%\end{equation}
\begin{equation}\label{sa3}
\dot{\uB}(t)=[W\otimes\IB_d-\IB_{dI}][\uB(t)-\gd f(\xB)],\es\uB(0)=0,
\end{equation}
%where $W(t)$ is the interpolation of $W[n]$, 
where $\IB$ is the identity matrix, $d$ denotes the dimension of $\xB_i$'s, and $\otimes$ means the Kronecker product.
%For simplicity we will treat $W(t)$ as constant; as long as it remains doubly stochastic, irreducible, and nondegeneracy (all edges having corresponding entries in $[\vartheta,1]$), the fact that it is time-varying does not change anything.
Then \cref{sa3} would become
\begin{equation}\label{sa4}
\dot{\yB}(t)=(W\otimes\IB_d-\IB_{dI})\yB(t),\es\yB(0)=\gd f(\xB).
\end{equation}
\begin{lemma}\label{l1}
We have $\limti\yB(t)=\overline{\gd f}(\xB)\otimes\1_I$.
\end{lemma}
\begin{proof}
We have
\bal\label{sa5}
\limti\yB(t)&=\limti e^{t(W\otimes\IB_d-\IB_{dI})}\yB(0)\\
&=\limti e^{tW\otimes\IB_d}e^{-t\IB_{dI}}\gd f(\xB)\\
&=\limti (e^{tW}\otimes\IB_d)\cdot\frac{1}{e^t}\IB_{dI}\gd f(\xB)\\
&=\limti \left(\frac{e^t}{I}\1_I\1_I^T\right)\otimes\IB_d\cdot\frac{1}{e^t}\gd f(\xB)\\
&=\left[\frac{1}{I}(\1_I\1_I^T)\otimes\IB_d\right]\cdot\gd f(\xB)=\overline{\gd f}(\xB)\otimes\1_I,
\eal
where $\1$ is the all one vector and $\overline{\gd f}=\frac{1}{I}\sum_{i=1}^I\gd f_i$. The second equality follows from the two matrices being multiplication commutative, third from $e^{\mathbf{A}\otimes\IB+\IB\otimes\mathbf{B}}=e^{\mathbf{A}}\otimes e^{\mathbf{B}}$, and fourth from the fact that $\limti W^t=\frac{1}{I}(\1_I\1_I^T)$.
\end{proof}

\noindent We see that $\yB(t)$ indeed goes to the unique global asymptotically stable equilibrium, where every component of $\yB$, i.e. $\yB_i$'s, equals to the average of the gradients as desired.

Next, from the perspective of the slow variable $\xB$, the fast variable $\yB$ already reaches its equilibrium $\yBb(\xB)$. That is to say, in \cref{sa1-2} $\xBt_j(\xB_j[n],\yB_j[n])$ can be seen as $\xBt_j(\xB_j[n],\overline{\gd f}(\xB_j[n]))$, which is exactly $\xBh_j(\xB_j[n])$, making \cref{sa1-2} become
\begin{equation}\label{sa6}
\xB_i[n+1]=\sum_{j=1}^Iw_{ij}\left[\xB_j[n]+\al[n]\left(\xBh_j(\xB_j[n])-\xB_j[n]+\eB_j[n]\right)\right].
\end{equation}
As stated in \cite{Borkar_Gossip_2016}, this recursive relation is again a two time-scale stochastic approximation in disguise, with fast averaging and slow learning processes. In fact, the averaging process is also on natural time-scale as $\yB$. From \cite{Borkar_Gossip_2016} we know that the iterates of $\xB$ will reach consensus $\xB[n]=\bms\xB_c[n]^T&\cdots&\xB_c[n]^T\bme^T$, while each of its component $\xB_c[n]\in\R^d$ tracks the ODE
\bal\label{sa7}
\dot{\xB}_c(t)&=\frac{1}{I}(\1_I\otimes\IB_d)^T\bms\cdots&\xBh_i(\xB_c(t))^T-\xB_c(t)^T&\cdots\bme^T\\
&=\frac{1}{I}\sum_{i=1}^I\xBh_i(\xB_c(t))-\xB_c(t),
\eal
as $\1_I/I$ is the unique stationary distribution resulted from $W$.

Note that with $\xBh_i$'s being Lipschitz continuous (\cite{Scutari_NEXT_2016}, Prop. 5a), this ODE is well-posed. We assume the differentiability of $G$ for now to avoid dealing with trickier non-differentiable Lyapunov function here. We consider the whole objective itself as the Lyapunov function $V(\xB_c)=U(\xB_c)=F(\xB_c)+G(\xB_c)$. Then
\bal\label{sa8}
\dot{V}(\xB_c(t))&=[\gd U(\xB_c(t))]^T\cdot\left[\frac{1}{I}\sum_{i=1}^I\xBh_i(\xB_c(t))-\xB_c(t)\right]\\
&\leq-c_\tau\frac{1}{I}\sum_{i=1}^I\|\xBh_i(\xB_c(t))-\xB_c(t)\|^2\leq0,
\eal
for some positive constant $c_\tau$. The first inequality is established similarly as \cite{Scutari_NEXT_2016}, Prop. 5b. By Lasalle's invariance principle, the iterates converge to the set of equilibria $\{\xB_c:\frac{1}{I}\sum_{i=1}^I\xBh_i(\xB_c)=\xB_c\}$ (\cite{Borkar_SABook_2008}, p. 57 and p. 118), which is the set of stationary solutions of the original optimization problem (\cite{Scutari_NEXT_2016}, Prop. 2).

We note that the conditions for applying \cite{Borkar_TwoTime_1997} and \cite{Borkar_Gossip_2016} are either established in \cite{Scutari_NEXT_2016} or implied by the assumptions of Theorem 4 in \cite{Scutari_NEXT_2016}. Specifically, the boundedness of $\xB$ follows from the recursive relation \cref{sa1-2} and Proposition 9 (a) in \cite{Scutari_NEXT_2016}, $\sum_n\al[n]=\sum_n\be[n]=\infty$, $\sum_n\al[n]^2<\infty$, and $\sup\sum_n\al[n]\eB_i[n]<\infty$ are just assumed in Theorem 4 in \cite{Scutari_NEXT_2016}. Note that we do not need $\sum_n\be[n]^2<\infty$ as there is no noise in the recursion of $\yB$. Also, we have deterministic convergence rather than almost sure convergence, since instead of being martingale differences, our noise term $\eB_i[n]$ is actually deterministically bounded.

\subsection{A Remark on Using One-Step Gradient Descent}\label{sec:sa-2}
Solving the local optimization in Equation (8) in \cite{Scutari_NEXT_2016} may be costly. Instead, from the stochastic approximation viewpoint, we can solve it iteratively as well using a time-scale faster than $\al[n]$. Once again assume that $G$ is continuously differentiable to avoid working with subgradients. The optimization problem in Equation (8) in \cite{Scutari_NEXT_2016} belongs to the class of constrained convex optimization problems, and can be solved by projected gradient descent:
\beq\label{sa8-1}
\xB'_i[n'+1]=\Pc_\Kc^e(\xB'_i[n']-\ga[n]\gd_{\xB_i}\tilde{U}(\xB'_i[n'];\xB_i[n],\piBt_i[n]))
\eeq
where $\ga[n]$ is some time-scale faster than $\al[n]$ and $P_\Kc$ is the Euclidean projection onto the set $\Kc$
\[
P_\Kc^e(\zB)=\underset{\xB\in\Kc}{\arg\min}\frac{1}{2}\|\xB-\zB\|^2.
\]
Consider using the natural time-scale $\ga[n]=1$. The idea of stochastic approximation is essentially blending \cref{sa8-1} into the original algorithm \cite{Borkar_Proj_2017}. Namely, at $\xB_i[n]$ instead of running \cref{sa8-1} infinitely many times to exactly solve (8) in \cite{Scutari_NEXT_2016}, we only run one step gradient descent of \cref{sa8-1} from $\xB_i[n]$:
\bal\label{sa8-2}
\xBt'_i[n]&=\Pc_\Kc^e(\xB_i[n]-\gd_{\xB_i}\tilde{U}(\xB_i[n];\xB_i[n],\piBt_i[n]))\\
&=P_\Kc^e\left[\xB_i[n]-(\gd\ft_i(\xB_i[n];\xB_i[n])+\piBt_i[n]+\gd G(\xB_i[n]))\right]\\
&=P_\Kc^e\left[\xB_i[n]-(\gd f_i(\xB_i[n])+\piBt_i[n]+\gd G(\xB_i[n]))\right],
\eal
and then use $\xBt'_i[n]$ instead of $\xBt_i[n]$ in (S.2) (a) of Algorithm 1 in \cite{Scutari_NEXT_2016}.
As $\xB_i[n]$ converges, so does the coupling process of \cref{sa8-1}.

From the previous subsection we know that from the perspective of the slow variable $\xB$, $\yB_j$ can be seen as $\overline{\gd f}(\xB_j)$, and thus $\piBt_i[n]$ as $\sum_{j\neq i}\gd f_j(\xB_i[n])$. Combining this, \cref{sa8-2}, and Algorithm 2 in \cite{Scutari_NEXT_2016} yield
\beq\label{sa8-3}
\xB_i[n+1]=\Pc_\Kc^e\left\{\sum_jw_{ij}\left[\xB_j[n]+\al[n](-\gd U(\xB_j[n])+\eB_j[n])\right]\right\}.
\eeq
Note that \cref{sa8-3} contains two projections. It is simple gradient descent followed by a projection to the consensus plane $\Cc:=\{\XB=[\xB_1^T\es\cdots\es\xB_I^T]^T\in\R^{dI}:\xB_1=\cdots=\xB_I\}$ similar to Example 3 in \cite{Borkar_Gossip_2016}, and then a further projection onto $\Kc$. Similar to \cref{sec:sa-1}, the iterates of $\xB$ will reach consensus $\xB[n]=\bms\xB_c[n]^T&\cdots&\xB_c[n]^T\bme^T$, while each of its component $\xB_c[n]\in\R^d$ tracks the ODE
\beq\label{sa8-4}
\dot{\xB}_c(t)=\Pc_\Kc^e[-\gd U(\xB_c(t))],
\eeq
called the projected gradient flow. From Proposition 5 of \cite{Bianchi_DistGrad_2013} (also \cite{Borkar_Proj_2017}, p. 10), $U$ works as a Lyapunov function for the set of its stationary solutions, then by Theorem 2 of \cite{Bianchi_DistGrad_2013} (also \cite{Borkar_Proj_2017}, Proposition 9 and Remark 10) the iterates converge almost surely to the set of stationary solutions\footnote{Again, ``almost surely" is unnecessary here, since our noise is deterministically bounded.}\footnote{If in general we consider $\dot{\xB}_c(t)=\Pc_\Kc^e[h(\xB_c(t))]$, a Lyapunov function may not exist, and we are only guaranteed convergence to a ``nonempty compact connected internally chain transitive invariant set" in $\Kc\dU\Cc$ \cite{Borkar_Gossip_2016}. A detailed tutorial covering the difference of these sets can be found in \cite{Benaim_SA_2005}.}\footnote{Coerciveness of $U$ is required to apply the LaSalle invariance principle. Without it, $U$ has to be analytic to make the set of local minima and the set of Lyapunov stable points of the gradient flow equal to each other \cite{Absil_GradFlow_2006}.}.

In \cref{sa8-3}, the matrix $\WB$ only needs to be column stochastic instead of doubly stochastic. There exists some distribution $\{\zt_i\}_{i=1}^I$ to which $\WB$ converges, and the ODE will be minimizing $\sum_{i=1}^I\zt_i U=U$.
%We will describe this in more detail in \cref{sec:cons}.
It immediately follows that with doubly stochastic $\WB$ (which induces $\{\frac{1}{I}\}_{i=1}^I$), we can reduce $\gd U(\xB_j[n])$ in \cref{sa8-3} to $\gd f_j(\xB_j[n])+\gd G(\xB_j[n])$ and still minimizing $\sum_{i=1}^I\frac{1}{I}(f_i+G)=U$, implying that we can simply let the gradient of each node spread through the gossip and do not have to track $\yB$ and $\piBt$ anymore. This is exactly what is done in \cite{Bianchi_DistGrad_2013} (with $G=0$).

The two papers \cite{Scutari_NEXT_2016} and \cite{Bianchi_DistGrad_2013} have their own strengths. The method in \cite{Bianchi_DistGrad_2013} can solve the whole problem using less computation power due to the high cost of local optimization, while \textit{NEXT} not only allows non-differentiable $G$ but also achieves the optimal within fewer iterations. This is useful in delay sensitive applications with abundant computing resource.

\section{Technical Lemmas}\label{app:e}
We put some technical lemmas used in the proof in this appendix. Some of them are from \cite{Scutari_NEXT_2016}.

\begin{fact}\label{f24}
%\hfill
For all $m$ we have the following.
\begin{enumerate}[label=(\alph*)]
\item
$J^m_\perp\WBh^m[n]=J^m_\perp\WBh^m[n]J^m_\perp=\WBh^m[n]-\frac{1_m}{I}\1_\Nm\1_\Nm^T\otimes I_{d_m}$, where we also have
\bals
J^m_\perp\WBh^m[n]
&=\WBh^m[n]-\left(\WBh^m[n]\cdot\frac{1}{I_m}\1_\Nm\1_\Nm^T\right)\otimes\IB_{d_m}\\
&=\left(\IB_{d_mI_m}-\1_\Nm\1_\Nm^T\otimes\IB_{d_m}\right)\WBh^m[n]\left(\IB_{d_mI_m}-\1_\Nm\1_\Nm^T\otimes\IB_{d_m}\right)\\
&=J^m_\perp\WBh^m[n]J^m_\perp.
\eals
We use the equality $(A\otimes B)\cdot(C\otimes D)=(A\cdot C)\otimes(B\cdot D)$ in showing the above equation.
\item
$J^m_\perp\WBh^m[n]J^m_\perp\WBh^m[n-1]\cdots J^m_\perp\WBh^m[l]=J^m_\perp\PBh^m[n,l]=\left(\PB^m[n,l]-\frac{1}{I_m}\1_\Nm\1_\Nm^T\right)\otimes\IB_{d_m}$.
\item
$\bar{\mathbf{q}}^m\triangleq\frac{1}{I_m}\sum_{i\in\Nm}q_i=\frac{\1_\Nm^T\otimes\IB_{d_m}}{I_m}\mathbf{q}$ where $\mathbf{q}=[q_1^T\es\cdots\es q_I^T]^T$ and $q_1,\dots,q_I$ are all arbitrary in $\R^{d_m}$.
\item
$\xB^m[n]=\WBh^m[n-1]\xB^m[n-1]+\al[n-1]\WBh^m[n-1]\Dl\xB^{m,inx}[n-1]$ where $\Dl\xB^{m,inx}[n]=\left(\I\{i\in\Nm\}(\xB^{m,inx}_i[n]-\xB^m_i[n])\right)_{i\in\Nc}$. This simply follows from Lines 7 and 9 of \cref{a6}.
\item
$\xBb^m[n]=\xBb^m[n-1]+\frac{\al[n-1]}{I_m}\left(\1_\Nm^T\otimes\IB_{d_m}\right)\Dl\xB^{m,inx}[n-1]$. This follows from applying (c) to (d).
\end{enumerate}
\end{fact}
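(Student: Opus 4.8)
\emph{Overview and building block.} Every one of the five identities is elementary, and the plan is to reduce all of them to three ingredients: the Kronecker mixed-product rule $(A\otimes B)(C\otimes D)=(AC)\otimes(BD)$, the double-stochasticity of each $\WB[n]$ in its two forms $\WB[n]\1_I=\1_I$ (row) and $\1_I^T\WB[n]=\1_I^T$ (column), and the idempotence of the averaging projector. I would open the proof by recording the single computation everything rests on: since $J=\frac{1}{I}\1_I\1_I^T\otimes I_m$ and $\WBh[n]=\WB[n]\otimes I_m$, the mixed-product rule gives $J\WBh[n]=\left(\frac{1}{I}\1_I\1_I^T\WB[n]\right)\otimes I_m$ and $\WBh[n]J=\left(\WB[n]\frac{1}{I}\1_I\1_I^T\right)\otimes I_m$, and invoking column- and row-stochasticity respectively collapses both to $\frac{1}{I}\1_I\1_I^T\otimes I_m=J$. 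The same rule together with $\1_I^T\1_I=I$ yields $J^2=J$, so $J_\perp=I_{mI}-J$ is a projection. These three facts, $J\WBh[n]=J$, $\WBh[n]J=J$, and $J^2=J$, are all that the rest of the argument uses.

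\emph{Parts (a) and (b).} For (a) I would simply expand $J_\perp\WBh[n]=\WBh[n]-J\WBh[n]=\WBh[n]-J$, which is the claimed $\WBh[n]-\frac{1}{I}\1_I\1_I^T\otimes I_m$, and then $J_\perp\WBh[n]J_\perp=(\WBh[n]-J)(I_{mI}-J)=\WBh[n]-\WBh[n]J-J+J^2=\WBh[n]-J$, using $\WBh[n]J=J$ and $J^2=J$; the two expressions therefore coincide. For (b) the plan is induction on the number of factors, with the collapse identity $J_\perp\WBh[k]J_\perp=J_\perp\WBh[k]$ from (a) absorbing the interior projectors one at a time: assuming $J_\perp\WBh[n-1]\cdots J_\perp\WBh[l]=J_\perp\PBh[n-1,l]$, I multiply on the left by $J_\perp\WBh[n]$ and use $J_\perp\WBh[n]J_\perp=J_\perp\WBh[n]$ to get $J_\perp\WBh[n]\PBh[n-1,l]=J_\perp\PBh[n,l]$. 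Finally, since a product of doubly-stochastic matrices is doubly-stochastic, $\PB[n,l]$ is doubly-stochastic, so the very computation of (a) applied to $\PBh[n,l]=\PB[n,l]\otimes I_m$ gives $J_\perp\PBh[n,l]=\PBh[n,l]-J=\left(\PB[n,l]-\frac{1}{I}\1_I\1_I^T\right)\otimes I_m$, which is exactly the stated right-hand side.

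\emph{Parts (c), (d), (e).} Identity (c) is immediate from the block form $\1_I^T\otimes I_m=[\,I_m\ \cdots\ I_m\,]$ ($I$ copies), so $(\1_I^T\otimes I_m)\mathbf{q}=\sum_{i=1}^I q_i$ for any stacked $\mathbf{q}=[q_1^T\ \cdots\ q_I^T]^T$. For (d) I would stack the per-node consensus step $\xB^2_i[n]=\sum_j w_{ij}[n-1]\zB^2_j[n-1]$ into $\xB^2[n]=\WBh[n-1]\zB^2[n-1]$ and substitute the $\zB^2$-update $\zB^2[n-1]=\xB^2[n-1]+\al[n-1]\Dl\xB^{2,inx}[n-1]$, then distribute $\WBh[n-1]$. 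For (e) I would left-multiply (d) by $\frac{1}{I}(\1_I^T\otimes I_m)$; by (c) the left side is $\xBb^2[n]$, while $\frac{1}{I}(\1_I^T\otimes I_m)\WBh[n-1]=\frac{1}{I}(\1_I^T\WB[n-1])\otimes I_m=\frac{1}{I}\1_I^T\otimes I_m$ by column-stochasticity, which turns the first term into $\xBb^2[n-1]$ and leaves $\frac{\al[n-1]}{I}(\1_I^T\otimes I_m)\Dl\xB^{2,inx}[n-1]$. This average-preservation step, that an averaging row annihilates the mixing matrix, is the only conceptually meaningful point in (c)--(e).

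\emph{Main obstacle.} There is no genuine difficulty here; the whole Fact is bookkeeping, and I would present it as a short sequence of verifications. The one thing I would be careful about is invoking the correct form of double-stochasticity at each use: column-stochasticity $\1_I^T\WB[n]=\1_I^T$ whenever an averaging row acts from the left (in $J\WBh[n]=J$, in the induction for (b), and in (e)), versus row-stochasticity $\WB[n]\1_I=\1_I$ whenever the averaging column acts from the right (in $\WBh[n]J=J$, used for $J_\perp\WBh[n]J_\perp$); and keeping the Kronecker factors paired so that the mixed-product rule applies cleanly. I would also note that, for (e) to be consistent with the notation $\xBb^2[n]=\frac{1}{I}\sum_i\xB^2_i[n]$, the bar in (c) must denote the average $\frac{1}{I}\sum_i q_i$ rather than the bare sum, so that the displayed $\frac{1}{I}$ factor is the correct one.
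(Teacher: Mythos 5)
Your proof is correct and follows essentially the same route as the paper's own (largely inline) justification: the Kronecker mixed-product rule plus the two forms of double-stochasticity for (a)--(b), stacking the per-node consensus and $\zB^2$-updates for (d), and left-multiplying by the averaging row for (e). Your observation that the bar in (c) must denote the average $\frac{1}{I}\sum_{i=1}^I q_i$ rather than the bare sum is right --- the paper's statement of (c) is internally inconsistent there, just as its displayed computation in (a) drops the factor $\frac{1}{I}$ from $J_\perp$, and your version fixes both in the way consistent with the definition of $\xBb^2[n]$.
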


\begin{lemma}\label{l21}
Let $0<\lm<1$, and let $\{\be[n]\}$ and $\{\nu[n]\}$ be two positive scalar sequences. Then
\begin{enumerate}[label=(\alph*)]
\item
If $\lim_{n\ra\infty}\be[n]=0$, then $\lim_{n\ra\infty}\sum_{l=1}^n\lm^{n-l}\be[l]=0$.
\item
If further we have $\sum_{n=1}^\infty\be^2[n]<\infty$ and $\sum_{n=1}^\infty\nu^2[n]<\infty$, then $\lim_{n\ra\infty}\sum_{k=1}^n\sum_{l=1}^k\lm^{k-l}\be^2[l]<\infty$ and $\lim_{n\ra\infty}\sum_{k=1}^n\sum_{l=1}^k\lm^{k-l}\be[k]\nu[l]<\infty$.
\end{enumerate}
\end{lemma}

\begin{lemma}\label{l22}
Let $\{Y[n]\}$, $\{X[n]\}$, and $\{Z[n]\}$ be three sequences of numbers such that $X[n]\geq0$ for all $n$. If $Y[n+1]\leq Y[n]-X[n]+Z[n]$ for all $n$ and $\sum_{n=1}^\infty Z[n]<\infty$, then either $Y[n]\ra-\infty$ or $\{Y[n]\}$ converges to a finite value and $\sum_{n=1}^\infty X[n]<\infty$.
\end{lemma}

\begin{lemma}\label{l24}
Let $0<\lm<1$, and let $\{\be[n]\}$ and $\{\nu[n]\}$ be two positive scalar sequences such that $\be[n]\ra0$, $\nu[n]\ra\infty$, and $\be[n]\nu[n]\ra0$. If further there exist $1>\tilde{\lm}>\lm$ and $N$ such that $\frac{\be[n]}{\be[l]}\geq\tilde{\lm}^{n-l}$ for all $n\geq l\geq N$, then $\lim_{n\ra\infty}\nu[n]\sum_{l=1}^n\lm^{n-l}\be[l]=0$. Moreover, if $\be[n]\nu[n]$ is summable, then so is $\nu[n]\sum_{l=1}^n\lm^{n-l}\be[l]$.
\end{lemma}
\begin{proof}
The proof of the first part of the claim is straightforward.
\bal\label{28-1}
\nu[n]\sum_{l=1}^n\lm^{n-l}\be[l]
&=\nu[n]\sum_{l=1}^{N-1}\lm^{n-l}\be[l]+\be[n]\nu[n]\sum_{l=N}^n\lm^{n-l}\frac{\be[l]}{\be[n]}\\
&\leq\nu[n]\sum_{l=1}^{N-1}\lm^{n-l}\be[l]+\be[n]\nu[n]\sum_{l=N}^n\frac{\lm^{n-l}}{\tilde{\lm}^{n-l}}\\
&\leq\nu[n]\sum_{l=1}^{N-1}\lm^{n-l}\be[l]+\be[n]\nu[n]\cdot\frac{1}{1-\lm/\tilde{\lm}}.
\eal
The second term goes to zero by the condition. Note that the meaning of $\frac{\be[n]}{\be[l]}\geq\tilde{\lm}^{n-l}$ basically says $\be[n]$ cannot decay to zero faster than at an exponential rate. Thus, $\be[n]\nu[n]\ra0$ would imply that $\lm^n\nu[n]\ra0$ as well. For the second part of the claim, just sum \cref{28-1} over $n$.
\end{proof}

\end{document}